
\documentclass[11pt, draft]{amsart}
\usepackage{amssymb, amstext, amscd, amsmath, amssymb}
\usepackage{mathtools, xypic, paralist, color, dsfont, rotating}
\usepackage{verbatim}
\usepackage{enumerate,enumitem}
\usepackage{float}
\usepackage{setspace}
\usepackage{pifont}



\usepackage{tikz}

\floatstyle{boxed} 
\restylefloat{figure}

\numberwithin{equation}{section}
\footskip=20pt 

\usepackage[a4paper]{geometry}
\geometry{
    tmargin= 3cm, 
    bmargin= 2.5cm, 
    rmargin= 2.5cm, 
    lmargin= 2.5cm 
    }

\usepackage{quoting}
\quotingsetup{vskip=.1in}
\quotingsetup{leftmargin=.6in} 
\quotingsetup{rightmargin=.6in}

\let\OLDthebibliography\thebibliography
\renewcommand\thebibliography[1]{
  \OLDthebibliography{#1}
  \setlength{\parskip}{0pt}
  \setlength{\itemsep}{2pt plus 0.5ex}
}

%
\makeatletter
\def\@cite#1#2{{\m@th\upshape\bfseries%
[{#1\if@tempswa{\m@th\upshape\mdseries, #2}\fi}]}}
\makeatother
%
\theoremstyle{plain}
\newtheorem{theorem}{Theorem}[section]
\newtheorem{corollary}[theorem]{Corollary}
\newtheorem{proposition}[theorem]{Proposition}

\theoremstyle{definition}
\newtheorem{definition}[theorem]{Definition}
\newtheorem{example}[theorem]{Example}

\newtheorem{remark}[theorem]{Remark}

\theoremstyle{remark}


%

\mathtoolsset{centercolon}
%
  \newcommand{\A}{{\mathcal{A}}}
  \newcommand{\B}{{\mathcal{B}}}
  \newcommand{\C}{{\mathcal{C}}}

  \newcommand{\F}{{\mathcal{F}}}
  \newcommand{\G}{{\mathcal{G}}}
\renewcommand{\H}{{\mathcal{H}}}
  \newcommand{\I}{{\mathcal{I}}}
  \newcommand{\J}{{\mathcal{J}}}
  \newcommand{\K}{{\mathcal{K}}}
\renewcommand{\L}{{\mathcal{L}}}
  
  \newcommand{\N}{{\mathcal{N}}}
\renewcommand{\O}{{\mathcal{O}}}

\renewcommand{\S}{{\mathcal{S}}}
  \newcommand{\T}{{\mathcal{T}}}

  \newcommand{\X}{{\mathcal{X}}}
  \newcommand{\Y}{{\mathcal{Y}}}
  
\newcommand{\eps}{\varepsilon}
\def\al{\alpha}
\def\be{\beta}

\def\ga{\gamma}
\def\De{\Delta}
\def\de{\delta}
\def\ze{\zeta}

\def\la{\lambda}
\def\La{\Lambda}

\def\Om{\Omega}

\newcommand\vphi{\varphi}



\newcommand{\bC}{\mathbb{C}}

\newcommand{\bN}{\mathbb{N}}
\newcommand{\bT}{\mathbb{T}}
\newcommand{\bZ}{\mathbb{Z}}
\newcommand{\bR}{\mathbb{R}}
\newcommand{\fA}{{\mathfrak{A}}}


\newcommand{\Bx}{{\mathbf{x}}}
\newcommand{\By}{{\mathbf{y}}}
\newcommand{\Bz}{{\mathbf{z}}}

\newcommand{\foral}{\text{ for all }}
\newcommand{\qand}{\quad\text{and}\quad}

\newcommand{\qiff}{\quad\text{if and only if}\quad}
\newcommand{\qfor}{\quad\text{for}\quad}


\newcommand{\ca}{\mathrm{C}^*}

\newcommand{\cenv}{\mathrm{C}^*_{\textup{env}}}

\newcommand{\ol}{\overline}
\newcommand{\wt}{\widetilde}
\newcommand{\wh}{\widehat}


\newcommand{\ad}{\operatorname{ad}}

\newcommand{\alg}{\operatorname{alg}}

\newcommand{\cov}{\operatorname{c}}

\newcommand{\env}{\operatorname{\textup{env}}}

\newcommand{\fock}{\operatorname{F}}

\newcommand{\id}{{\operatorname{id}}}

\newcommand{\mt}{\emptyset}

\newcommand{\scv}{\operatorname{sc}}
\newcommand{\spn}{\operatorname{span}}

\newcommand{\sumoplus}{\operatornamewithlimits{\sum \strut^\oplus}}

\newcommand{\sca}[1]{\left\langle#1\right\rangle} 
\newcommand{\nor}[1]{\left\Vert #1\right\Vert} 
\newcommand{\bo}[1]{\mathbf{#1}} 
\newcommand{\quo}[2]{{\raisebox{.1em}{$#1$}\left/ \, \raisebox{-.1em}{$#2$}\right.}} 

\newcommand{\ses}[5]{
	\xymatrix{
		0 \ar[r] & #1 \ar[r]^-{#2} & #3 \ar[r]^-{#4} & #5 \ar[r] & 0
	}
}
\newcommand{\tes}[7]{
	\xymatrix@C=2cm@R=1.5cm{
		K_0\left(#1\right) \ar[r]^{#2} & K_0\left(#3\right) \ar[r]^{#4} & K_0\left(#5\right) \ar[d] \\
		K_1\left(#5\right) \ar[u] & K_1\left(#3\right) \ar[l]^{#7} & K_1\left(#1\right) \ar[l]^{#6}
	}
}



\begin{document}

\title[Product systems and their representations]{Product systems and their representations: an approach using Fock spaces and Fell bundles}

\author[E.T.A. Kakariadis]{Evgenios T.A. Kakariadis}
\address{School of Mathematics, Statistics and Physics\\ Newcastle University\\ Newcastle upon Tyne\\ NE1 7RU\\ UK}
\email{evgenios.kakariadis@newcastle.ac.uk}

\thanks{2010 {\it  Mathematics Subject Classification.} 46L08, 47L55, 46L05}

\thanks{{\it Key words and phrases:} Co-actions, Fock space, Fell bundles, product systems.}

\begin{abstract}
In this exposition we highlight product systems as the semigroup analogue of Fell bundles.
Motivated by Fock creation operators we extend the definition of Fowler's product systems over unital discrete left-cancellative semigroups, via both a concrete and an abstract characterization.
We next underline the importance of Fock spaces and Fell bundle methods for obtaining covariant representation results.
First, the strongly covariant representations of Sehnem, for product systems over group embeddable semigroups, find their analogue here.
Secondly, we give two solutions for the existence of the co-universal C*-algebra with respect to equivariant injective Fock-covariant representations in this context.
The first one uses an intertwining isometry technique to realize it as the C*-envelope of the normal cosystem on the Fock tensor algebra.
The second uses a Fell bundle quotient technique to realize it as the reduced C*-algebra of Sehnem's strong covariance relations.
This C*-algebra is boundary for a larger class of equivariant injective representations.
\end{abstract}

\maketitle

\section{Introduction}


\noindent
\textbf{Framework.}
A boundary quotient is a terminal object for a class of representations.
In most of the cases boundary quotients are manifestations of corona sets and further our understanding beyond the ``classical'' universe of finite dimensional algebras and their norm-limits.

They have been a continuous source of inspiration in particular in the operator algebras theory.
First of all, they provide key C*-constructions in the study of geometric and topological objects, e.g., semigroups, graphs, dynamical systems etc.
Dating as back as the classification of factors by Murray and von Neumann in the 1930's and 1940's, they have been used successfully for detecting phase transitions of C*-measurements, such as K-theory, equilibrium states, finite dimensional approximations, or quasi-diagonal representations, and help reveal properties of the structures that may not be apparent intrinsically.
A second manifestation of boundary representations arises through the notions of the \v{S}ilov and the Choquet boundaries.
Motivated by the interaction with convexity and approximation theory \cite{CM63} on (possibly nonselfadjoint) function algebras, Arveson foresaw a noncommutative analogue in his ground-breaking work \cite{Arv69}, later established by Hamana \cite{Ham79}. 
A multi-faceted combination of the selfadjoint and nonselfadjoint viewpoints has been recently confirmed, with fruitful applications in group theory \cite{BKKO17, KK17}, noncommutative geometry \cite{CvS+}, and noncommutative convexity \cite{EH19}.

Herein we study product systems over a unital discrete group-embeddable semigroup $P \subseteq G$, and their representations.
Explored for more than 30 years, this construct models a great number of operator algebras, including those associated with graphs (of rank 1 or higher), dynamical systems of several flavours (reversible and irreversible), semigroups, and Nica-covariant product systems.
The identification of the co-universal C*-object has been of major importance and is at the base of many other developments, e.g., C*-correspondences \cite{DKPS10, KK06, Kat04, MS98, Pim95}, product systems \cite{AM15, CLSV11, DFK17, DKKLL20, DK20, Fow99, Fow02, KS16, Seh18, SY10}, and semigroups and higher rank graphs \cite{KKLL21, KK06g, Li12, Nic92, RSY04, RS99}.
The $P = \bZ_+$ case has been quite accommodating in this respect, e.g., see \cite{DK18} for a discussion that we will avoid repeating here.
Applications beyond $\bZ_+$ include results on the C*-structure, Takai duality, and the Hao-Ng problem, e.g., \cite{AM15, DFK17, DKPS10, DKKLL20, DK20, Fle18, HN08, Kak19, KKLL21, Kat20, KP00, KL19b, KS16, RRS17, Sch15}, to mention but a few.

A long-standing question has been the identification of the boundary quotient with the (plain) C*-envelope of the Fock tensor algebra.
The breakthrough connection with co-universality was established by Dor-On--Katsoulis \cite{DK18}, where they show that all objects (the boundary quotient, the C*-envelope and the co-universal object) coincide in the abelian lattice case.
These ideas were further explored by Dor-On--Kakariadis--Katsoulis--Laca--Li \cite{DKKLL20} for the right LCM case and by Kakariadis--Katsoulis--Laca--Li \cite{KKLL21} for the semigroup algebras case, by introducing the C*-envelope of a cosystem, where it was noted that further advancements would require building a conditional expectation on the C*-envelope.
Recently, Sehnem \cite{Seh21} answered this conjecture and provided the full solution to this long-standing problem.

Our main goal here has been to amplify the Fock space and the Fell bundle techniques of \cite{CLSV11, DKKLL20, Exe97, Fow02, KKLL21, Rae92, Seh18} to a more general context than that of Fowler, and highlight product systems as the semigroup analogue of Fell bundles.
Motivated by a concrete picture, we consider product systems over any unital discrete left-cancellative semigroup, and use the Fock space as a model of covariance.
We also provide an equivalent definition of abstract product systems.
We show that the equivariant injective Fock-covariant representations of product systems admit a terminal object via two incarnations: as the C*-envelope of the cosystem on the Fock tensor algebra from \cite{DKKLL20}, and as the reduced C*-algebra of the strong covariance Fell bundle from \cite{Seh18}.
Working backwards we identify a super-class of covariant representations that admits the same terminal object and argue why this is the furthest one goes.


\vspace{4pt}

\noindent
\textbf{Fell bundles.}
Hilbertian representations play an important role in the study of algebraic structures.
One of the cornerstone examples is the Gel'fand--Raikov Theorem that ``identifies'' a locally compact topological group with its unitary representations.
The idea of \emph{quantized/quantum functional analysis} (as it was termed in the beginning) has been the driving force for constructing C*-algebras of groups, dynamical systems, graphs etc.
Similar to the group case, at one end there is a maximal/universal C*-algebra that emits to all possible representations, and at the other end there is a minimal/reduced C*-algebra that receives from all possible representations that admit a grading and are injective on the initial generators.
Exel \cite{Exe97} provided a great generalization at the level of Fell bundles over a discrete group $G$, showing that the left regular representation is terminal in this class.
His seminal work unifies and covers a great number of constructions, including that of graphs and crossed products of (partial) dynamics.

\vspace{4pt}

\noindent
\textbf{Product systems.}
Product systems aim in modelling a semigroup analogue of Fell bundles.
They appeared first in the work of Arveson \cite{Arv89} disguised under duality, and considered later by Dinh \cite{Din91} for discrete semigroups of $\bR_+$.
Fowler \cite{Fow99, Fow02} formally defined product systems $X = \{X_p\}_{p \in P}$ and proceeded to an in-depth study when $(G, P)$ is a quasi-lattice, that inspired a great number of subsequent works.
They have been used extensively for encoding C*-constructs with the list of examples being rather long to include here.
The interested reader may consult Michael Frank's detailed Literature List on Hilbert modules (available online).
It may be instructive, however, to give the analogies with the prototypical example of graphs \cite{CK80}.
Given a finite directed graph $\G$ there are two C*-models for the reconstruction of vertices:
the Toeplitz--Cuntz--Krieger C*-algebra $\T_\G$ (from emitting edges) and the Cuntz--Krieger C*-algebra $\O_\G$ (from emitting and receiving edges).
They both admit a coaction of $\bZ$ by measuring the length of paths (with adjoints contributing with a negative length).
A third operator algebra, nonselfadjoint, arises by considering the $\bZ_+$-semigroup: we write $\A_\G$ for the algebra generated by $\G$ in $\T_\G$.
We thus have the following analogies with the notions we are going to use: 

\vspace{-2pt}
\begin{enumerate}
\item The C*-algebra $\T_\G$ represents the Fock-covariant representations.
\item The operator algebra $\A_\G \subseteq \T_\G$ represents the Fock tensor algebra.
\item The C*-algebra $\O_\G$ represents the strong covariance relations.
\end{enumerate}

\vspace{-2pt}
\noindent
The C*-algebra $\O_\G$ is co-universal in two ways.
If $\Phi$ is a $\bZ$-equivariant $*$-representation of $\T_\G$ that is injective on the vertices, then there exists a canonical $*$-epimorphism from $\ca(\Phi)$ onto $\O_\G$.
On the other hand, if $\phi$ is completely isometric representation of $\A_\G$ (that inherits the coaction of $\bZ_+$), then there exists a canonical $*$-epimorphism from $\ca(\phi)$ onto $\O_\G$.
Hence $\O_\G$ is both a C*-boundary quotient and an $\A_\G$-boundary representation in the sense of Arveson.


\vspace{4pt}

\noindent
\textbf{Nica-covariance.}
Fowler was greatly inspired by Nica's work \cite{Nic92} on quasi-lattices.
It had been known that, unlike to the group case, taking plain isometric representations leads to intractable objects.
For example, the universal isometric C*-algebra of $\bN \times \bN$ is not even nuclear \cite{Mur87}.
A richer structure is at hand when one considers the relations reflected in the Fock representation of a quasi-lattice order $(G, P)$, known as Nica-covariance.
In order to deal with this, Fowler imposed several axioms on the product system such as being compactly aligned and that the ``product'' $X_p \cdot X_q$ is dense in $X_{pq}$ for all $p, q \in P$.
A major consequence of this setup is a Wick ordering, i.e., the ``monomials'' $X_p \cdot X_q^*$ densely span the generated C*-algebra.
The same picture is acquired by Kwa\'{s}niewski--Larsen \cite{KL19b} when $P$ is a right LCM-semigroup (see also \cite{DKKLL20}).
In practice, compact alignment on right LCM-semigroups is the furthest one can go and still admit a Wick ordering.


\vspace{4pt}

\noindent
\textbf{Fock covariance.}
Nica-covariance may fail for general semigroups, but one can still use the Fock representation as the prototype for covariant-type relations.
This has been noticed by Cuntz--Deninger--Laca \cite{CDL13} for $P = R \rtimes R^\times$.
In his ground-breaking work Li \cite{Li12} introduced and studied semigroup representations that, apart from the semigroup structure, they remember the principal ideals and their intersections, i.e., the \emph{constructible} ideals.
This model was further examined by Kakariadis--Katsoulis--Laca--Li \cite{KKLL21} in connection also with inverse semigroup realizations of Norling \cite{Nor14}.
Recently Laca--Sehnem \cite{LS21} identified the correct covariant relations that place the Fock representation of a semigroup as the reduced C*-algebra of a natural Fell bundle.
The constructible and the Fock model coincide only under an independence condition \cite{KKLL21, LS21}.
In the case of product systems it has been observed in \cite[Proposition 4.3]{DKKLL20} that the Nica-Toeplitz C*-algebra is maximal for the Fock bundle in the case of right LCM-semigroups.
Interestingly the Nica-covariance and compact alignment for right LCM-semigroups are connected in a solid way as shown in \cite[Proposition 3.2]{Kat20} and \cite[Proposition 4.7]{KKLL21b}.


\vspace{4pt}

\noindent
\textbf{Strong covariance.}
The graph C*-algebra $\O_\G$ provides a corona context for the graph by moding out appropriate compact operators.
Following a series of works, Fowler--Laca--Raeburn \cite{FLR00} defined $\O_\G$ for general graphs.
Later Katsura \cite{Kat04} established the covariant relations that give the correct analogue of a Cuntz-type C*-algebra for single C*-correspondences.
An extensive effort for establishing the strong covariance relations for general product systems has been put since then.
The goal had been to identify a C*-algebra that: 

\vspace{-2pt}
\begin{enumerate}
\item[(a)] admits an injective copy of the co-efficient C*-algebra $A$; 
\item[(b)] inherits a coaction of $G$; and 
\item[(c)] any representation that is injective on $A$ is faithful on its fixed point algebra.
\end{enumerate}

\vspace{-2pt}
\noindent
For \emph{regular} product systems this was met with much success \cite{AM15, DKPS10, Fow02, KS16}.
Moving forwards, Sims--Yeend \cite{SY10} introduced the augmented Fock representation as a candidate for quasi-lattices, and Carlsen--Larsen--Sims--Vittadello \cite{CLSV11} showed that it is an effective model in many non-regular cases.
At the same time they showed that this setup has its limitations and there are examples of semigroups that are not encountered, e.g., for $P = \left( (\bZ_+\setminus \{0\}) \times \bZ \right) \bigcup \left( \{0\} \times \bZ_+ \right)$ in $\bZ \times \bZ$ with the lexicographical order \cite[Example 3.16]{SY10}, or when $P$ is not directed \cite[Example 3.9]{CLSV11}.
Recently, Sehnmen \cite{Seh18} provided the universal construction $A \times_X P$ of strong covariance relations for general product systems.
This breakthrough step encompasses previous results and works for Fowler's product systems with no restrictions on the semigroup whatsoever.
Sehnem terms $A \times_X P$ as the ``covariance C*-algebra of $X$''.
However in view of the different covariant relations we treat here and since the representations of $A \times_X P$ are termed by Sehnem as strongly covariant, we will refer to $A \times_X P$ as the \emph{strong covariance C*-algebra of $X$}.


\vspace{4pt}

\noindent
\textbf{Boundary quotients.}
Strongly covariant quotients sometimes are referred to as boundary quotients, since by construction they produce a non-commutative boundary in a corona universe.
They have been shown to give a boundary quotient in several constructions in two ways: as the \v{S}ilov boundary of the nonselfadjoint Fock tensor algebra (and thus its C*-envelope in the sense of Arveson), and as the co-universal representation of $X$ with respect to equivariant Fock-covariant representations that are injective on $A$.
At the level of C*-correspondences the C*-envelope question was answered by Katsoulis--Kribs \cite{KK06}, while co-universality of the Cuntz-Pimsner algebra was proven by Katsura \cite{Kat07} by a parametrization of the equivariant ideals of the Fock representation.
Concerning compactly aligned product systems, Carlsen--Larsen--Sims--Vittadello \cite{CLSV11} considered the co-universality question for quasi-lattices, with additional assumptions on $X$.
The C*-envelope question for C*-dynamics over $\bZ_+^d$ was answered by Davidson--Fuller--Kakariadis \cite{DFK17}, who used the C*-envelope machinery to acquire the form of the co-universal C*-algebra.
Recently, Dor-On--Katsoulis \cite{DK20} used the C*-envelope to answer the co-universality question for abelian lattices, providing solid evidence of a novel connection between the two problems.
Lately, Dor-On--Kakariadis--Katsoulis--Laca--Li \cite{DKKLL20} resolved the issue at the generality of right LCM-semigroups, which has been the latest to date context for product systems theory.
A pivotal step in their work is the initiation of a C*-envelope theory for cosystems on possibly nonselfadjoint operator algebras.
In parallel, Kakariadis--Katsoulis--Laca--Li \cite{KKLL21} have attained boundary quotient results for trivial fibers over any group-embeddable semigroup.
The essence of \cite{DKKLL20, KKLL21} is that they establish the compatibility between the two uses of the ``boundary'' term, as in the C*-theory and as in the nonselfajoint theory.
The methods of \cite{DKKLL20, KKLL21} rely on a blend of Fock space and Fell bundle techniques, and form a cornerstone for the current work.


\vspace{4pt}

\noindent
\textbf{Gauge-invariant uniqueness theorems.}
A gauge-invariant-uniqueness-theorem (GIUT) provides necessary and sufficient conditions for an equivariant representation of $A \times_X P$ to be faithful.
This can be derived for example by property (c) of strong covariance for amenable groups $G$.
Sometimes co-universality results are termed as GIUT's (or the other way around).
However, the GIUT for strong covariance is weaker than co-universality, as it refers to a subclass of representations of $X$.
Even more, any relative Cuntz-Pimsner algebra for $P= \bZ_+$ satisfies an appropriate GIUT \cite{Kak14a}, while one cannot hope for a GIUT for $A \times_X P$ without imposing amenability.


\vspace{4pt}

\noindent
\textbf{Main results.}
We therefore arrive to the following questions:

\vspace{-2pt}
\begin{enumerate}[leftmargin=30pt]
\item[(I)] Fowler's product systems impose that $X_p \cdot X_q$ be dense in $X_{pq}$.
Although the inclusion $X_p \cdot X_q$ in $X_{pq}$ is easy to check, the density requirement adds an extra step for associating a C*-construct with a product system.
Can this be relaxed, and how?
\item[(II)] The (unconditional) universal C*-algebra of a product system may lead to intractable objects, even for abelian orders.
A rich structure is prominent for the universal C*-algebra that is related to the Fell bundle arising in the Fock representation has been observed in \cite{DKKLL20, KKLL21b} for right LCM-semigroups and in \cite{LS21} for semigroups.
Different covariant models give in principle non-isomorphic universal algebras.
However, substantial progress can be made without those completely being identified, as for example in \cite{CDL13} and \cite{Li12, Nor14} for semigroups.
What are the key algebraic properties from the Fock-covariant model?
\item[(III)] The co-universal object for equivariant injective Fock-covariant representations of $X$ has been computed for compactly aligned product systems over a right LCM-semigroup \cite{DKKLL20} and for trivial product systems over any group-embeddable semigroup \cite{KKLL21}, in direct connection to \cite{CLSV11, DK20, Seh18}. 
In some cases, the same strongly covariant object is co-universal for a wider class of representations \cite{KKLL21}.
What is the largest covariant model that accommodates such a result?
\end{enumerate}

\vspace{-2pt}
\noindent
Taking motivation from \cite{DKKLL20, KKLL21, KKLL21b, LS21}, we consider an expanded notion of product systems, which includes that of Fowler \cite{Fow02}, by relaxing the condition in item (I).
We show that this class is still accommodating for key results concerning their covariant representations in relation to item (II).
Towards this end we show that the techniques of \cite{CLSV11, DKKLL20, Exe97, KKLL21, Seh18} find their analogue here under the appropriate adjustments.
As a consequence, we obtain the solution to the co-universality problem of item (III) without assuming extra structure on $X$ or $P$, gathering previous co-universality results, for example those that appear in \cite{AM15, CLSV11, DFK17, DKKLL20, DK20, Exe97, Fow02, KKLL21, KK06g, KK06, Kat04, KP00, KS16, Li12, MS98, Pim95, RRS17, RS99, Seh18, SY10}.


\vspace{4pt}

\noindent
\textbf{Discussion of results.}
Section 2 is devoted to preliminaries.
We describe the main results from \cite{DKKLL20, Exe97} that we are going to use here.

In Section 3 we revisit the definition of a product system, and we first consider concrete families $X = \{X_p\}_{p \in P}$ of operator spaces in a common $\B(H)$.
We require that the inclusions $X_p \cdot X_q \subseteq X_{pq}$ are satisfied, as in the super-product systems setting of Margetts--Srinivasan \cite{MS13} and Shalit--Skeide \cite{SS20}.
We next make use of the left-cancellation of $P$ to induce an ``adjoint behaviour'' and obtain \emph{adjointable} left creation operators on the Fock space, by declaring that $A := X_e$ is a C*-algebra and that $X_p^* \cdot X_{pq} \subseteq X_q$.
These alterations still make each $X_p$ a C*-correspondence over $A$ with the structure endowed by $\B(H)$.
At the same time, we provide an abstract characterization of a product system, which is useful for algebraic constructs.
We show that there is no distinction between ``concrete'' and ``abstract'' in terms of the related operator algebras.
Next we study a generalized version of cores and we give some preliminary results on the fixed point algebra of equivariant representations.

In Section 4 we use the Fock representation to locate the key properties for a rich algebraic structure.
First we use the coaction of $G$ on the Fock representation to identify a universal \emph{Fock-covariant} C*-algebra $\T_{\cov}^{\fock}(X)$, whose Fell bundle has $\T_\la(X)$ as its reduced C*-algebra.
We also identify the injective representations of $X$ that descend to injective Fock-covariant representations, which we refer to simply as \emph{covariant}.
In particular, we show that if an equivariant injective representation factors through a Fock-covariant one then it needs to be covariant.
Secondly we extend the results of Sehnem \cite{Seh18} and show that there is a direct analogue of the universal \emph{strong covariance} C*-algebra $A \times_X P$ in this context.
Most of the arguments of \cite{Seh18} pass through here, with the difference that we unconditionally show that there are canonical $*$-epimorphisms
\[
\T(X) \longrightarrow \T_{\cov}^{\fock}(X) \longrightarrow A \times_X P,
\] 
that fix $X$.
We have included plenty of details from \cite{Seh18} and \cite{DKKLL20} here in order to make clear how the arguments extend to our setting.
The $G$-Fell bundle in $A \times_X P$ defines also the \emph{reduced} strong covariance C*-algebra $A \times_{X, \la} P$ that will be of great importance and was first considered in \cite{DKKLL20}.
The C*-algebras of this section are independent of the group embedding of $P$ (and this why $G$ is omitted from their notation).

In Section 5 we show that the class of equivariant injective covariant representations of $X$ admits a terminal object.
One key point is that we can reduce to the case of injective Fock-covariant representations.
In the first approach we give a nonselfadjoint proof through the C*-envelope $\cenv(\T_\la(X)^+, G, \de)$ of the cosystem on the algebra $\T_\la(X)^+$ generated by $A$ and $X$ in $\T_\la(X)$, first introduced in \cite{DKKLL20}.
Towards this end we prove a Fell's absorption principle for $\T_\la(X)^+$ by any equivariant representation of $\T_\la(X)$ that is injective on $A$.
Thus such representations generate C*-covers for $\T_\la(X)^+$, and the strategy of \cite{DKKLL20} applies to give that $\cenv(\T_\la(X)^+, G, \de)$ is co-universal for the equivariant injective covariant representations of $X$.
Due to the normality of the coaction we deduce that it coincides with $A \times_{X, \la} P$.
Alternatively, we give a C*-algebraic approach of this result from Carlsen--Larsen--Sims--Vittadello \cite{CLSV11}, inspired by \cite{Kat85, Rae92}.
We note that the algebraic structure of strong covariance combines with the general Fell bundle arguments of \cite{CLSV11}, and gives that $A \times_{X, \la} P$ is the required co-universal object.
As the C*-envelope of the cosystem is in the class of representations and inherits a normal coaction, it then coincides with $A \times_{X, \la} P$.
We close with some immediate consequences of the co-universality results, including a GIUT for $A \times_{X, \la} P$.
They encompass a number of results in the literature, highlighting in the most emphatic way the ground-breaking significance of Sehnem's strong covariance relations \cite{Seh18}, and the angles taken in \cite{DKKLL20, KKLL21, KKLL21b}.

\vspace{4pt}

\noindent
\textbf{Notes and remarks.}
The author would like to acknowledge that in the context of the joint research with Elias Katsoulis, Marcelo Laca and Xin Li leading to \cite{KKLL21}, he had access to a draft from Marcelo Laca that, among other considerations relevant to \cite{KKLL21}, contained: 1) A contribution attributed to, and shared with the consent of, Camila F. Sehnem consisting of a proof that every gauge-equivariant $*$-homomorphism of $\mathcal{T}_\lambda(X)$ that is injective on $A$ is completely isometric on the tensor algebra $\mathcal{T}_\lambda(X)^+$. 
This can now be found in Proposition 3.1 and Corollary 3.5 of \cite{Seh21}, cf.~Propositions \ref{P:cocover nsa} and \ref{P:cocover cstar} below. 
2) A discussion of the ``domain of attraction" of co-universality of the cosystem C*-envelope of the semigroup algebra, due to Laca and Sehnem. 
This included a proof that the C*-envelope of the cosystem arising from the tensor algebra of a submonoid $P$ of a group is co-universal for equivariant isometric representations $T$ of $P$ such that $\ca(T)$ has a common nontrivial equivariant quotient with $\mathcal{T}_u(P)$, and the suggestion that a sufficient condition for this is that the quotient of the reduced  C*-algebra of the Fell bundle of $\ca(T)$ by the  ideal generated by the relations (T1)-(T4) of \cite{LS21} is nontrivial, cf.~Main Results (III) above. 
The idea of modeling a universal Toeplitz C*-algebra for a product system on its Fock representation emerged in the context of the aforementioned collaboration  in conversations with Elias Katsoulis, and also with Marcelo Laca in relation to a joint project of his with Camila F. Sehnem building upon \cite{LS21},  cf.~Main Results (II) above.


\vspace{4pt}

\noindent
\textbf{Acknowledgements.}
The author acknowledges support from EPSRC through the ``Operator Algebras for Product Systems'' programme (ref.\ EP/T02576X/1).
The author would like to thank Elias Katsoulis, Marcelo Laca and Xin Li for the motivating discussions during the production of \cite{KKLL21,KKLL21b}.
The author would like to dedicate this work to Nellie (and our random walks and ---occasionally--- talks), and to Neroccio di Bartolomeo de' Landi for the inspiration.

\section{Operator algebras and their coactions}

\subsection{Operator algebras}

If $\X$ and $\Y$ are subspaces of a normed algebra, then we write $[\X \cdot \Y]$ for the norm-closure of the linear span of the elements $x \cdot y$ with $x \in \X$ and $y \in \Y$.
Our main references for operator spaces theory are the monographs \cite{BL04, Pau02}.
For this paper an operator algebra is a subalgebra of some $\B(H)$ for a Hilbert space $H$.
We will write $\otimes$ for the spatial tensor product between operator spaces.

We say that $(C, \iota)$ is a \emph{C*-cover} of $\fA$ if $\iota \colon \fA \to C$ is a completely isometric representation with $C = \ca(\iota(\fA))$.
The \emph{C*-envelope} $\cenv(\fA)$ of $\fA$ is a C*-cover $(\cenv(\fA), \iota)$ with the following co-universal property:
if $(C', \iota')$ is a C*-cover of $\fA$, then there exists a (necessarily unique) $*$-epimorphism $\Phi \colon C' \to \cenv(\fA)$ such that $\Phi(\iota'(a)) = \iota(a)$ for all $a \in \fA$.
Arveson predicted the existence of the C*-envelope in \cite{Arv69} which he computed for a variety of operator algebras.
Later Hamana \cite{Ham79} removed all assumptions and proved the unconditional existence of injective envelopes.
The C*-envelope is the C*-algebra generated by $\fA$ in its injective envelope once this is endowed with the Choi--Effros C*-structure. 

The basic example of a C*-envelope arises in the case of uniform algebras: the C*-envelope of a uniform algebra is formed by the continuous functions on its \v{S}ilov boundary.
For the non-commutative analogue of this result consider $\fA \subseteq \ca(\fA)$.
An ideal $\I \lhd \ca(\fA)$ is called \emph{boundary} if the quotient map $\ca(\fA) \to \ca(\fA)/\I$ is completely isometric on $\fA$.
The \emph{\v{S}ilov ideal} $\I_s$ is a boundary ideal that contains all the boundary ideals of $\fA$.
The existence of the C*-envelope implies the existence of the \v{S}ilov ideal; in particular, it follows that $\cenv(\fA)$ is canonically isomorphic to $\ca(\fA)/\I_s$.

\subsection{Coactions on operator algebras}

All groups we consider are discrete.
For a discrete group $G$ we write $u_g$ for the generators of the universal group C*-algebras $\ca_{\max}(G)$, and $\la_g$ for the generators of the left regular representation $\la \colon \ca_{\max}(G) \to \ca_\la(G)$.
We write $\chi$ for the character on $\ca_{\max}(G)$.

The notion of a cosystem for a (possibly nonselfajoint) operator algebra $\fA$ was introduced in \cite{DKKLL20}.
A \emph{coaction of $G$ on $\fA$} is a completely isometric representation $\de \colon \fA \to \fA \otimes \ca_{\max}(G)$ such that the induced subspaces
\[
\fA_g := \{a \in \fA \mid \de(a) = a \otimes u_g\}
\]
densely span $\fA$.
Therefore $\de$ satisfies the coaction identity
\[
(\de \otimes \id_{\ca_{\max}(G)}) \de = (\id_{\fA} \otimes \De) \de.
\]
If, in addition, the map $(\id \otimes \la) \de$ is injective, then the coaction $\de$ is called \emph{normal}. 
The triple $(\fA, G, \de)$ is called a \emph{cosystem}.
A map $\Phi \colon \fA \to \fA'$ between two cosystems $(\fA, G, \de)$ and $(\fA', G, \de')$ is said to be \emph{$G$-equivariant} if $\de' \Phi = (\Phi \otimes \id) \de$; equivalently $\Phi(\fA_g) \subseteq \fA'_g$ for all $g \in G$.
We will be using the term \emph{equivariant} when the group $G$ is understood.

This definition covers that of Quigg for full coactions on C*-algebras \cite{Qui96}.
In \cite{Qui96} the coactions are assumed to be non-degenerate instead of the fibers densely spanning the C*-algebra, but for discrete groups these two properties are equivalent.

\begin{remark}\label{R:nd cis}
Suppose that $(\fA, G, \de)$ is a cosystem and that $\de$ extends to a $*$-homomorphism $\de \colon \ca(\fA) \to \ca(\fA) \otimes \ca_{\max}(G)$ that satisfies the coaction identity
\[
(\de \otimes \id) \de(c) = (\id \otimes \De) \de(c) \foral c \in \ca(\fA).
\]
By \cite[Remark 3.2]{DKKLL20} then $\de$ is automatically non-degenerate on $\ca(\fA)$ in the sense that
\[
\ol{\de(\ca(\fA)) \left[\ca(\fA) \otimes \ca_{\max}(G)\right]} = \ca(\fA) \otimes \ca_{\max}(G).
\]

In particular, the coactions on operator algebras arise as restrictions of coactions on C*-algebras.
Recall (for example from \cite[Proposition 2.4.2]{BL04}) that $(\ca_{\max}(\fA), j)$ is the C*-cover of $\fA$ with the following universal property: every completely contractive representation $\Phi \colon \fA \to \B(H)$ extends to a $*$-representation $\wt{\Phi} \colon \ca_{\max}(\fA) \to \B(H)$.
Therefore, if $(\fA, G, \de)$ is a cosystem, then we may consider the $*$-representation extension $\wt{\de}$ on $\ca_{\max}(\fA)$ by using the completely isometric homomorphism:
\[
\ca_{\max}(\fA) \supseteq j(\fA) \longrightarrow \fA \longrightarrow \fA \otimes \ca_{\max}(G) \longrightarrow j(\fA) \otimes \ca_{\max}(G) \subseteq \ca_{\max}(\fA) \otimes \ca_{\max}(G).
\]
Since $\id \otimes \chi$ is a left inverse for $\wt{\de}$, we get that $\wt{\de}$ is faithful.
Moreover, we have that each fibre
\[
[\ca_{\max}(\fA)]_g := \{x \in \ca_{\max}(\fA) \mid \wt{\de}(x) = x \otimes u_g\}
\]
contains all elements from $\fA_g$ and $(\fA_{g^{-1}})^*$, and all elements of the form
\[
a_{g_1}^{\eps_1} a_{g_2}^*a_{g_3} \cdots a_{g_{n-1}} a_{g_n}^* a_{g_{n+1}}^{\eps_2} \qfor g = g_1^{\eps_1} g_2^{-1} g_3 \cdots g_{n-1} g_n^{-1} g_{n+1}^{\eps_2}; n \in \bZ_+; \eps_{1}, \eps_2 \in \{0,1\};
\]
with the understanding that, if $\eps_i = 0$, then we omit that element from the product.
By construction $\ca_{\max}(\fA)$ is densely spanned by these monomials and therefore $\wt{\de}$ is a coaction.
\end{remark}

A \emph{reduced} coaction on $\fA$ consists of a family of spaces $\{\fA_g\}_{g \in G}$ that densely span $\fA$ and a completely isometric homomorphism $\de_\la \colon \fA \to \fA \otimes \ca_\la(G)$ such that
\[
\de_\la(a) = a \otimes \la_g \foral a \in \fA_g.
\] 
By using the Fell absorption principle, it is shown in \cite[Proposition 3.4]{DKKLL20} that in this case there is a coaction $\de$ such that $(\id \otimes \la) \de = \de_\la$, i.e., reduced coactions lift to normal coactions.

In \cite{DKKLL20} the authors introduce the notion of the C*-cover and of the C*-envelope for a cosystem.
A triple $(C, \iota, \de_C)$ is called a \emph{C*-cover} for a cosystem $(\fA, G, \de)$ if $(C, \iota)$ is a C*-cover of $\fA$ and $\de_C \colon C \to C \otimes \ca_{\max}(G)$ is a coaction on $C$ such that the following diagram
\[
\xymatrix{
\fA \ar[rr]^{\iota} \ar[d]^{\de} & & C \ar[d]^{\de_C} \\
\fA \otimes \ca_{\max}(G) \ar[rr]^{\iota \otimes \id} & & C \otimes \ca_{\max}(G)
}
\]
commutes.
The \emph{C*-envelope} of $(\fA, G, \de)$ is a C*-cover $(\cenv(\fA, G, \de), \iota, \de_{\env})$ such that: for every C*-cover $(C', \iota', \de')$ of $(\fA, G, \de)$ there exists a $*$-epimorphism $\Phi \colon C' \to \cenv(\fA, G, \de)$ that fixes $\fA$ and intertwines the coactions, i.e., the diagram
\[
\xymatrix{
\iota'(\fA) \ar[rrr]^{\de'} \ar[d]^{\Phi} & & & C' \otimes \ca_{\max}(G) \ar[d]^{\Phi \otimes \id} \\
\iota(\fA) \ar[rrr]^{\de_{\env}} & & & \cenv(\fA, G, \de) \otimes \ca_{\max}(G)
}
\]
is commutative on $\iota'(\fA)$, and thus extends to a commutative diagram on $C'$.

One of the main results of \cite{DKKLL20} is that the C*-envelope of a cosystem $(\fA, G, \de)$ exists and that it links to the usual C*-envelope in the following way.
Fix the embedding $i \colon \fA \to \cenv(\fA)$.
Then the C*-algebra $\ca(i(a_g) \otimes u_g \mid g \in G)$ is a C*-cover by considering the representation:
\[
\xymatrix@C=2cm{
\fA \ar[r]^{\de \phantom{ooo} } & \fA \otimes \ca_{\max}(G) \ar[r]^{i \otimes \id \phantom{oooo} } \ar[r] & \cenv(\fA) \otimes \ca_{\max}(G).
}
\]
We can then endow it with the coaction $\id \otimes \De$, and in \cite[Theorem 3.8]{DKKLL20} it is shown that
\[
(\cenv(\fA, G, \de), \iota, \de_{\env}) \simeq (\ca(i(a_g) \otimes u_g \mid g \in G), (i \otimes \id) \de, \id \otimes \De).
\]
If $\de$ is normal, then Fell's absorption principle implies that the induced coaction on $\cenv(\fA, G, \de)$ is normal as well \cite[Corollary 3.9]{DKKLL20}.

If $G$ is abelian then the C*-envelope $\cenv(\A, G, \de)$ of a cosystem $(\A, G, \de)$ coincides with the C*-envelope $\cenv(\A)$, i.e., when $G$ is abelian then the C*-envelope inherits the coaction from $\A$.
Indeed, coactions over abelian groups $G$ are equivalent to point-norm continuous actions $\{\be_\ga \mid \ga \in \wh{G}\}$ of the dual group $\wh{G}$.
Since every $\be_\ga$ is a completely isometric automorphism it extends to the C*-envelope.
It is unknown if this is the case for amenable groups in general.

\subsection{Semigroups}

For a unital discrete left-cancellative semigroup $P$ we write
\[
\ca_\la(P) := \ca(V_p \mid p \in P)
\qand
\A(P) := \ol{\alg}\{V_p \mid p \in P\},
\]
for the left-creation (isometric) operators given by
\[
V_p \colon \ell^2(P) \to\longrightarrow \ell^2(P) ; \de_s \mapsto \de_{ps}.
\]
The restriction to the diagonal $\ell^\infty(P)$ induces a faithful conditional expectation on $\ca_\la(P)$.

We will require some facts about the right ideals of $P$ from \cite{Li12}.
For a set $X \subseteq P$ and $p \in P$ we write
\[
pX:=\{px \mid x \in X\}
\qand
p^{-1}X := \{y \in P \mid py \in X\}.
\]
Note here that by definition $p^{-1} P = P$.
We write $\J$ for the smallest family of right ideals of $P$ containing $P$ and $\mt$ that is closed under left multiplication, taking pre-images under left multiplication (as in the sense above) and finite intersections, i.e.,
\[
\J := \left\{ \bigcap\limits_{j=1}^N q_{j, n_j}^{-1} p_{j, n_j} \dots q_{j, 1}^{-1} p_{j, 1} P \mid N, n_j \in \bZ_+; p_{j,k}, q_{j,k} \in P \right\} \cup \{\mt\}.
\]
The right ideals of $P$ in $\J$ are called \emph{constructible}.
It follows from \cite[Lemma 3.3]{Li12} that 
\[
q_n^{-1} p_n \dots q_1^{-1} p_1 p_1^{-1} q_1 \dots p_n^{-1} q_n X = (q_n^{-1} p_n \dots q_1^{-1} p_1 P) \cap X
\]
for every $p_i, q_i \in P$ and every subset $X$ of $P$.
Thus the set of constructible ideals is automatically closed under finite intersections, i.e.,
\[
\J = \left\{ q_n^{-1} p_n \dots q_1^{-1} p_1 P \mid n \in \bZ_+; p_i, q_i \in P \right\} \cup \{\mt\}.
\]
We will write $\Bx, \By, \Bz$ etc.\ for the elements in $\J$.
For a set $X \subseteq P$ we will write $E_{[X]}$ for the projection on $\sca{\de_p \mid p \in X}$.
It follows that $E_{[\Bx]} E_{[\By]} = E_{[\Bx \cap \By]}$ for all $\Bx, \By \in \J$.

Suppose that $P$ is in addition a subsemigroup of a discrete group $G$.
Then $\ca_\la(P)$ admits a normal coaction of $G$ such that
\[
[\ca_\la(P)]_g = \ol{\spn}\{V_{p_1}^* V_{q_1} \cdots V_{p_n}^* V_{p_n} \mid n \in \bN; p_1, q_1, \dots, p_n, q_n \in P; p_1^{-1} q_1\cdots p_n^{-1}q_n = g\}.
\]
Indeed, if $U \colon \ell^2(P) \otimes \ell^2(G) \to \ell^2(P) \otimes \ell^2(G)$ is the unitary operator given by
\[
U(\de_s \otimes \de_g) = \de_s \otimes \de_{sg} \foral s \in P, g \in G,
\]
then a routine calculation shows that the $*$-homomorphism
\[
\ca_\la(P) \stackrel{\simeq}{\longrightarrow}
\ca(V_p \otimes I \mid p \in P) \stackrel{\ad_{U}}{\longrightarrow}
\ca(V_p \otimes \la_p \mid p \in P)
\]
is a reduced coaction, and thus it lifts to a normal coaction on $\ca_\la(P)$.
The faithful conditional expectation on $\ca_\la(P)$ induced by $G$ coincides with the compression to the diagonal.
Lemma 3.1 of \cite{Li12} asserts that, if $p_1,q_1, \dots, p_n, q_n \in P$ are such that $p_1^{-1} q_1 \cdots p_n^{-1} q_n = e_G$ in $G$, then
\[
E_{[q_n^{-1} p_n \dots q_1^{-1} p_1 P]} = V_{p_1}^* V_{q_1} \cdots V_{p_n}^* V_{q_n}.
\] 
Note here that for such $p_i, q_i \in P$ we have
\[
V_{p_1}^* V_{q_1} \cdots V_{p_n}^* V_{q_n} e_r = e_r
\qiff
p_j^{-1} q_j \cdots p_n^{-1} q_n r \in P \foral j=1, \dots, n,
\]
otherwise it returns a zero value.
Since $p_1^{-1} q_1 \cdots p_n^{-1} q_n = e_G$ in $G$ it follows that
\[
q_{j-1}^{-1} p_{j-1} \cdots q_1^{-1} p_1 r = p_j^{-1} q_j \cdots p_n^{-1} q_n r \in P
\foral j=1, \dots, n.
\]
This shows that
\[
V_{p_1}^* V_{q_1} \cdots V_{p_n}^* V_{q_n} \neq 0
\qiff
q_n^{-1} p_n \dots q_1^{-1} p_1 P \neq \mt.
\]
Moreover, recall that
\[
q_n^{-1} p_n \dots q_1^{-1} p_1 p_1^{-1} q_1 \dots p_n^{-1} q_n P = (q_n^{-1} p_n \dots q_1^{-1} p_1 P) \cap P = q_n^{-1} p_n \dots q_1^{-1} p_1 P.
\]
Hence, without loss of generality, we will always assume that we write $\Bx = q_n^{-1} p_n \dots q_1^{-1} p_1 P$ with $p_1^{-1} q_1 \dots p_n^{-1} q_n = e_G$.
%
We can also use these relations to deduce computations for ideals in $\J$.
For example, let $p_1, q_1, \dots, p_n, q_n \in P$ with $p_1^{-1} q_1 \cdots p_n^{-1} q_n = e_G$ and $r_1, s_1, \dots, r_m, s_m \in P$ with $r_1^{-1} s_1 \cdots r_m^{-1} s_m = e_G$.
Then for the ideals
\[
\Bx := q_n^{-1} p_n \dots q_1^{-1} p_1 P, \;  \ \By := s_m^{-1} r_m \dots s_1^{-1} r_1 P \; \ \text{and} \; \ \Bz := s_m^{-1} r_m \dots s_1^{-1} r_1 q_n^{-1} p_n \dots q_1^{-1} p_1 P,
\]
we have that
\[
E_{[\Bz]} = V_{p_1}^* V_{q_1} \cdots V_{p_n}^* V_{q_n} V_{r_1}^* V_{s_1} \cdots V_{r_m}^* V_{s_m} = E_{[\Bx]} \cdot E_{[\By]} = E_{[\Bx \cap \By]}.
\]
We deduce that
\[
s_m^{-1} r_m \dots s_1^{-1} r_1 q_n^{-1} p_n \dots q_1^{-1} P 
= 
\Bz 
= 
\Bx \cap \By
=
s_m^{-1} r_m \dots s_1^{-1} r_1 P \cap p_n^{-1} q_n \dots p_1^{-1} q_1 P.
\]

\subsection{Gradings on C*-algebras}

A reduced coaction on a C*-algebra defines a topological grading over a discrete group $G$ in the sense of Exel \cite{Exe97}.
Recall that a \emph{topological grading} $\B := \{\B_g\}_{g \in G}$ of a C*-algebra consists of linearly independent subspaces that span a dense subspace of the C*-algebra they generate and are compatible with the structure of $G$, namely
\[
\B_g^* = \B_{g^{-1}} \text{ and } \B_g \cdot \B_h \subseteq \B_{g h} \foral g,h \in G.
\]
By \cite[Theorem 3.3]{Exe97} the independence condition can be substituted by the existence of a conditional expectation on $\B_e$.
A \emph{representation} $\Psi$ of $\B$ is a continuous linear map on $\bigcup_{g \in G} \B_g$ such that
\[
\Psi(b_g) \cdot \Psi(b_h) = \Psi(b_g \cdot b_h) \qand \Psi(b_g)^* = \Psi(b_g^*)
\]
for all $b_g \in \B_g$ and $b_h \in \B_h$.
We will say that $\Psi$ is \emph{$G$-equivariant} (or simply \emph{equivariant}) if $\ca(\Psi)$ admits a coaction such that
\[
\ca(\Psi) \longrightarrow \ca(\Psi) \otimes \ca_{\max}(G); \Psi(b_g) \mapsto \Psi(b_g) \otimes u_g.
\]
The maximal C*-algebra $\ca_{\max}(\B)$ of a topological grading $\B$ is the universal C*-algebra generated by $\B$ with respect to the representations of $\B$.
Due to universality, the maximal representation of $\B$ is equivariant.
The reduced C*-algebra $\ca_\la(\B)$ of $\B$ is defined by the left regular representation of $\B$ on $\ell^2(\B)$ given by
\[
\la(b_g) b_h = b_{g} b_{h} \foral g,h \in G.
\]
It follows that the left regular representation is equivariant with a normal coaction of $G$.
Indeed, for the unitary operator
\[
U \colon \ell^2(\B) \otimes \ell^2(G) \longrightarrow \ell^2(\B) \otimes \ell^2(G); b_g \otimes \de_h \mapsto b_g \otimes \de_{gh},
\]
we have that
\[
U \cdot ( \la(b_g) \otimes I ) = (\la(b_g) \otimes \la_g) \cdot U \foral g \in G.
\]
Hence we deduce the faithful $*$-homomorphism
\[
\ca_\la(\B) \stackrel{\simeq}{\longrightarrow} \ca_\la(\B) \otimes I \stackrel{\ad_U}{\longrightarrow} \ca_\la(\B) \otimes \ca_\la(G); \la(b_g) \mapsto \la(b_g) \otimes \la_g.
\]

A topological grading defines a \emph{Fell bundle}, and once a representation of a Fell bundle is established the two notions are the same.
In a loose sense, a Fell bundle $\B$ over a discrete group $G$ is a collection of Banach spaces $\{\B_g\}_{g \in G}$, often called its fibers, that satisfy canonical algebraic and C*-norm properties; for full details we see that \cite[Definition 16.1]{Exe17}.
So we will alternate between these two notions.

If $\de \colon \C \to \C \otimes \ca_{\max}(G)$ is a coaction on a C*-algebra $\C$, then we write $\{[\C]_g\}_{g \in G}$ for the induced topological grading.
It follows that $\de$ is normal if and only if the conditional expectation induced by $G$ is faithful.
An ideal $\I \lhd \C$ is called \emph{induced} if $\I = \sca{\I \cap [\C]_e}$.
If $\I \lhd C$ is an induced ideal, then $\de$ descends to a coaction on $C/\I$.

\begin{remark}\label{R:Exel}
We will be using the following facts from \cite{Exe97, Exe17}.
Let $\Psi$ be an equivariant representation of $\ca_{\max}(\B)$ and let 
\[
\Psi(\B) := \{\Psi(\B_g)\}_{g \in G}
\]
be the induced Fell bundle.
By \cite[Proposition 21.3]{Exe17} there are equivariant $*$-epimorphisms, making the following diagram
\[
\xymatrix{
\ca_{\max}(\B) \ar[rr] \ar[d] & & \ca_\la(\B) \ar[d] \\
\ca_{\max}(\Psi(\B)) \ar[r] & \ca(\Psi) \ar[r] & \ca_\la(\Psi(\B))
}
\]
commutative.
If $\Psi$ is an injective representation of $\B$, then $\ca_\la(\B) \simeq \ca_\la(\Psi(\B))$.
In that case the coaction on $\ca(\Psi)$ is normal if and only if $\ca(\Psi) \simeq \ca_\la(\B)$ by an equivariant $*$-isomorphism.
\end{remark}

\begin{remark}\label{R:induced}
Let $\C$ be a C*-algebra with a coaction $\de$ and let $\I = \sca{\I \cap [\C]_e}$ be an induced ideal of $\C$ so that $\C/\I$ admits a coaction.
A standard argument shows that, if $\Psi$ is an equivariant representation of $\C$, then there exists a commutative diagram
\[
\xymatrix{
\C \ar[d]^{q_{\I}} \ar[rr]^{\Psi} & & \ca(\Psi) \ar[d]^{q_{\Psi(\I)}} \\
\quo{\C}{\I} \ar[rr]^{\dot{\Psi}} & & \quo{\ca(\Psi)}{\Psi(\I)}
}
\]
so that $\dot{\Psi}$ is also equivariant.
Indeed, to this end it suffices to show that $\Psi(\I)$ is induced, i.e.,
\[
\Psi( \I \cap [\C]_e ) = \Psi(\I) \cap [\ca(\Psi)]_e.
\]
The inclusion ``$\subseteq$'' is immediate due to equivariance of $\Psi$.
For the reverse inclusion let the conditional expectation $E$, $E_{q_\I}$ and $E_\Psi$ on $\C$, $\C/\I$ and $\ca(\Psi)$, respectively.
For an element $x \in \Psi(\I) \cap [\ca(\Psi)]_e$ let $y \in \I$, so that $x = \Psi(y)$.
Then $q_\I E(y) = E_{q_\I} q_{\I}(y) = 0$, and so $E(y) \in \I \cap [\C]_e$.
Hence, we have that $x = E_\Psi(x) = E_\Psi \Psi(y) = \Psi E(y)$, and so $x \in \Psi( \I \cap [\C]_e )$.
\end{remark}

\section{Product systems and their operator algebras}

\subsection{Product systems}

Hilbert modules over C*-algebras are by now well understood.
The reader is addressed to \cite{Lan95,MT05} for an excellent introduction to the subject.

A \emph{C*-correspondence} $X$ over $A$ is a right Hilbert module over $A$ with a left action given by a $*$-homomorphism $\vphi_X \colon A \to \L X$, where $\L X$ denotes the C*-algebra of adjointable operators on $X$.
We write $\K X$ for the closed linear span of the rank one adjointable operators $\theta_{\xi, \eta}$.
For two C*-corresponden\-ces $X, Y$ over the same $A$ we write $X \otimes_A Y$ for the balanced tensor product over $A$.
We say that $X$ is \emph{unitarily equivalent} to $Y$ (symb. $X \simeq Y$) if there is a surjective adjointable operator $U \in \L(X,Y)$ such that $\sca{U \xi, U \eta} = \sca{\xi, \eta}$ and $U (a \xi b) = a U(\xi) b$ for all $\xi, \eta \in X$ and $a,b \in A$.
A representation of a C*-correspondence is a left and right module map on ${}_A X_A$ such that $t(\xi)^* t(\eta) = t(\sca{\xi,\eta})$ for all $\xi, \eta \in X$.
Every representation $t$ defines a $*$-representation on $\K X$, which we will denote again by $t$, such that $t(\theta_{\xi, \eta}) = t(\xi) t(\eta)^*$ for all $\xi, \eta \in X$.

\begin{definition}
Let $P$ be a unital discrete left-cancellative semigroup.
A \emph{concrete product system $X$ over $P$} is a family $\{X_p \mid p \in P\}$ of closed subspaces of some $\B(H)$ such that:
\begin{enumerate}
\item $A := X_{e}$ is a C*-algebra.
\item $X_p \cdot X_q \subseteq X_{pq}$ for all $p, q \in P$.
\item $X_p^* \cdot X_{pq} \subseteq X_q$ for all $p, q \in P$.
\end{enumerate}
\end{definition}

Since $P$ is left-cancellative we have that $q \in P$ in item (iii) above is unique (and thus this property is well-defined).
The following proposition shows that a concrete product system is a semigroup representation by C*-correspondences.
In particular, it asserts that every $X_{pq}$ is a right Hilbert module over $A$ and a left module over $[X_p \cdot X_p^*]$.

\begin{proposition}\label{P:equivalence}
Let $P$ be a unital discrete left-cancellative semigroup.
A family of closed subspaces $\{X_p\}_{p \in P}$ in a $\B(H)$ is a concrete product system over $P$, if and only if:
\begin{enumerate}
\item[\textup{(1)}] $A := X_e$ is a C*-algebra.
\item[\textup{(2)}] Every $X_{p}$ is a C*-correspondence over $A$ with the structure inherited by $\B(H)$.
\item[\textup{(3)}] $X_p \cdot X_q \subseteq X_{pq}$ for all $p, q \in P$.
\item[\textup{(4)}] $X_p \cdot X_p^* \cdot X_{pq} \subseteq [X_p \cdot X_q]$ for all $p, q \in P$.
\end{enumerate}
If items \textup{(1)}, \textup{(2)} and \textup{(3)} hold, then item \textup{(4)} is equivalent to each of the items below:
\begin{enumerate}
\item[\textup{(4')}] $[X_p \cdot X_p^* \cdot X_{pq}] = [X_p \cdot X_q]$ for all $p, q \in P$.
\item[\textup{(4'')}] $[X_p^* \cdot X_{pq}] = [X_p^* \cdot X_p \cdot X_q]$ for all $p, q \in P$.
\end{enumerate}
\end{proposition}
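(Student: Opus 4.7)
The plan is to prove the biconditional in two directions, separating out the equivalence of (4), (4'), (4'') under (1)--(3) as a self-contained intermediate step that the reverse implication relies on.

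For the forward direction, items (1) and (3) are exactly (i) and (ii). To check (2), I will verify that each $X_p$ carries a C*-correspondence structure over $A = X_e$ with operations inherited from $\B(H)$: the right and left actions $X_p \cdot A,\, A \cdot X_p \subseteq X_p$ come from (ii) with $q = e$ and $p = e$ respectively; the candidate inner product $\sca{\xi, \eta} := \xi^* \eta$ lands in $X_e = A$ by (iii) with $q = e$; and the remaining axioms (associativity, adjointability of the left action via $\vphi(a)^* = \vphi(a^*)$, and agreement of the Hilbert module norm with the operator norm via the $\ca$-identity) are automatic. Condition (4) is then the one-line consequence $X_p \cdot X_p^* \cdot X_{pq} = X_p \cdot (X_p^* \cdot X_{pq}) \subseteq X_p \cdot X_q$ of (iii).

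For the reverse direction, I will first establish the circular chain (4) $\Rightarrow$ (4') $\Rightarrow$ (4'') $\Rightarrow$ (4) under (1)--(3). The engine is the standard Hilbert module nondegeneracy identity $[X_p \cdot X_p^* \cdot X_p] = X_p$, together with its adjoint $[X_p^* \cdot X_p \cdot X_p^*] = X_p^*$; this follows from bounded approximate units in the C*-subalgebra $[X_p^* \cdot X_p] \subseteq A$ (the span of the inner products) once one notes that the Hilbert module norm from (2) coincides with the operator norm inherited from $\B(H)$. The three implications then chain as follows: for (4) $\Rightarrow$ (4') write
\[
[X_p \cdot X_q] = [X_p \cdot X_p^* \cdot X_p \cdot X_q] \subseteq [X_p \cdot X_p^* \cdot X_{pq}] \subseteq [X_p \cdot X_q]
\]
using nondegeneracy, (3), and (4); for (4') $\Rightarrow$ (4'') left-multiply the equality of (4') by $X_p^*$ and absorb $[X_p^* \cdot X_p \cdot X_p^*] = X_p^*$; for (4'') $\Rightarrow$ (4) left-multiply (4'') by $X_p$ and again absorb the nondegeneracy identity on the right-hand side.

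With (4'') at hand, (iii) is immediate: for any $\xi \in X_p$ and $\eta \in X_{pq}$,
\[
\xi^* \eta \in [X_p^* \cdot X_{pq}] = [X_p^* \cdot X_p \cdot X_q] \subseteq [A \cdot X_q] \subseteq X_q,
\]
where the last step uses that $X_q$ is a left $A$-module by (2) and is closed in $\B(H)$. The only point that requires genuine care is the Hilbert module nondegeneracy identity $[X_p \cdot X_p^* \cdot X_p] = X_p$ and the matching of norms that justifies it; everything else is algebraic bookkeeping against (1)--(3) and (4), so I expect no further obstacles.
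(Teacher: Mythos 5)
Your proposal is correct and follows essentially the same route as the paper: the forward direction is the same bookkeeping, and the reverse direction and the equivalences (4) $\Leftrightarrow$ (4') $\Leftrightarrow$ (4'') all hinge on the same key fact, namely the nondegeneracy identity $[X_p \cdot X_p^* \cdot X_p] = X_p$ (equivalently the density of $X_p^* X_p X_p^*$ in $X_p^*$), applied exactly as in the paper. The only cosmetic difference is that you derive property (iii) of the definition by first passing to (4''), whereas the paper applies the density trick directly to (4); the computation is the same.
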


\begin{proof}
Suppose that $\{X_p\}_{p \in P}$ is a product system over $P$.
By applying property (ii) of the definition for $q \to e_P$ we get that $X_p$ is a right module over $A = X_e$.
By applying property (iii) of the definition for $q \to e_P$ we have that $X_p$ is a right Hilbert module over $A = X_e$ with the inner product space structure as inherited from $\B(H)$.
Recall here that the trivial Hilbert module structure on $\B(H)$ defines the same norm with the operator norm on $H$ due to the C*-identity.
Applying again property (ii) of the definition for $p \to e_P$ and $q \to p$ we get that $X_p$ is a left module over $A = X_e$.
As the operations take place inside $\B(H)$ which is a C*-correspondence over itself, we get that the left action is given by adjointable operators, and thus each $X_p$ is a C*-correspondence over $A = X_e$.
Finally, we have that 
\[
X_p \cdot (X_p^* \cdot X_{pq}) \subseteq X_p \cdot X_q \subseteq [X_p \cdot X_q],
\]
for all $p, q \in P$ by property (iii) of the definition.

Conversely, suppose that the family $\{X_p\}_{p \in P}$ satisfies items (1)-(4) of the statement.
Since $X_p$ is a C*-correspondence over $A = X_e$, then $X_p^* X_p X_p^*$ is dense in $X_p^*$, and thus
\[
X_p^* \cdot X_{pq} \subseteq [X_p^* \cdot X_p X_p^* X_{pq}] \subseteq 
[X_p^* \cdot X_p \cdot X_q] \subseteq [X_e \cdot X_q] \subseteq X_q.
\]

To finish the proof, suppose that items (1)-(3) of the statement hold.
Assume that item (4) holds.
Since $X_p X_p^* X_p$ is dense in $X_p$, then the equation 
\[
[X_p X_q] = [X_p X_p^* \cdot X_p X_q] \subseteq [X_p X_p^* X_{pq}] \subseteq [X_p X_q]
\]
gives item (4').
Assume now that item (4') holds.
By multiplying with $X_p^*$ and since $X_p^* X_p X_p^*$ is dense in $X_p^*$ we have that item (4') implies item (4'').
Likewise, multiplying with $X_p$ on both sides of item (4'') gives item (4), and the proof is complete.
\end{proof}

\begin{remark}
Fibres over invertible elements are compatible with adjoints.
That is, if $r$ is invertible in $P$ with inverse $r^{-1}$, then $[A \cdot X_r]^* = [A \cdot X_{r^{-1}}]$.
Indeed, by property (iii) of the definition we have that
\[
[A \cdot X_r]^* = [A \cdot X_r \cdot A]^* = [A \cdot X_r^* \cdot X_{r r^{-1}}] \subseteq [A \cdot X_{r^{-1}}].
\]
Taking adjoints and applying for $r^{-1}$ yields the symmetrical $[A \cdot X_{r^{-1}}] \subseteq [A \cdot X_r]^*$.

As a consequence, if $X = \{X_g\}_{g \in G}$ is a concrete product system over a group $G$ such that $X_e$ acts non-degenerately on every fibre then $X$ defines a Fell bundle, since taking adjoints satisfies
\[
X_g^* = [A \cdot X_g]^* = [A \cdot X_{g^{-1}}] = X_{g^{-1}} \foral g \in G.
\]
Conversely, if $\B = \{\B_g\}_{g \in G}$ is a concrete Fell bundle then it defines a concrete product system over $G$, with $\B_e$ acting non-degenerately on every fibre. 
Indeed, we have that
\[
\B_g \cdot \B_h \subseteq \B_{gh},
\qand
(\B_g)^* \cdot \B_{gh} = \B_{g^{-1}} \cdot \B_{gh} \subseteq \B_{h},
\]
while by definition $\B_e$ is a C*-algebra and non-degeneracy follows for example by \cite[Lemma 16.9]{Exe17}.
Therefore, product systems are in some sense a semigroup analogue of (concrete) Fell bundles.
\end{remark}

\begin{remark}[Fock representation]
The properties of a concrete product system are enough to provide a Fock space representation.
Consider $\B(H)$ with its trivial C*-correspondence structure.
We will be writing 
\[
\sca{\cdot, \cdot} \colon X_p \times X_p \longrightarrow A ; (\xi_p, \eta_p) \mapsto \sca{\xi_p, \eta_p} := \xi_p^* \cdot \eta_p,
\]
for the inner product induced on the $X_p$.
For every $p, q \in P$ we define the multiplication operators
\[
M_{\xi_p}^{q, pq} \colon X_q \longrightarrow X_{pq} ; \eta_q \mapsto \xi_p \cdot \eta_q,
\]
with the multiplication taken in $\B(H)$ where $X$ sits in.
It is clear that the operator norm of $M_{\xi_p}^{q, pq}$ is bounded by $\nor{\xi_p}_{X_p}$.
Associativity of the product gives that $M_{\xi_p}^{q, pq} \in \L(X_p, X_{pq})$ with
\[
(M_{\xi_p}^{q, pq} )^* = M_{\xi_p^*}^{pq, q}
\; \text{ where } \;
M_{\xi_p^*}^{pq, q}(\eta_{pq}) := \xi_p^* \cdot \eta_{pq} \in X_q.
\]
Indeed, a direct computation gives that
\[
\sca{\eta_{q}, M_{\xi^*_p}^{pq, q}(\eta_{pq})} = \eta_q^* \xi_p^* \eta_{pq} = \sca{M_{\xi_p}^{q, pq} \eta_q, \eta_{pq}}
\]
for all $\eta_q \in X_q$ and $\eta_{pq} \in X_{pq}$.
Consider the Fock space $\F X := \sum^\oplus_{r \in P} X_r$ which is a right Hilbert module over $A$.
For every $\xi_p \in X_p$ let
\[
\la(\xi_p) := \sumoplus_{r \in P} M_{\xi_p}^{r, pr} \; \text{ so that } \; \la(\xi_p)^* = \sumoplus_{r \in P} M_{\xi_p^*}^{pr, r},
\]
where the sum is taken in the s*-topology, and with the understanding that $\L(X_p, X_{pq}) \hookrightarrow \L(\F X)$ as the $(pq,p)$-entry.
By applying on $\eta_r \in X_r \subseteq \F X$ we derive that
\[
\la(\xi_p) \eta_r = \xi_p \cdot \eta_r
\qand
\la(\xi_p)^* \eta_r =
\begin{cases}
\xi_p^* \cdot \eta_r & \text{if } r \in pP, \\
0 & \text{if } r \notin pP.
\end{cases}
\]
Equivalently, let the Hilbert submodule $\sum_r^\oplus X_r \otimes e_r$ of $\B(H) \otimes \ell^2(P)$.
We can then write
\[
\la(\xi_p) = w ( M_{\xi_p} \otimes V_p ) w^* \foral p \in P,
\]
for the unitary $w \colon \F X \to \sum_r^\oplus X_r \otimes e_r$ with $w \xi_r := \xi_r \otimes e_r$, and the operators $M_x \in \L( \B(H) )$ with $M_x(y) := x y$ for all $x, y \in \B(H)$.
Consequently, we have that
\[
\la(\xi_p) \la(\xi_{q}) = \la(\xi_p \cdot \xi_{q}) 
\qand
\la(\xi_p)^* \la(\xi_{pq}) = \la(\xi_p^* \cdot \xi_{pq}).
\]
We write $\la \colon X \to \L(\F X)$ for this map and we refer to $\la$ as the \emph{Fock representation of $X$}.
\end{remark}

An equivalent categorical definition is available.
The key here is to consider a left module map of the left divisors on each fibre simultaneously.

\begin{definition}
Let $P$ be a unital discrete left-cancellative semigroup.
We say that a triple $\big( \{X_p\}_{p \in P}, \{i_{p}^{pq}\}_{p,q \in P}, \{u_{p,q}\}_{p,q \in P} \big)$ forms an \emph{abstract} product system $X$ if:
\begin{enumerate}
\item The space $X_p$ is a C*-correspondence over $A := X_e$ with a left action $\vphi_p$.
\item The space $X_{pq}$ (and thus, by associative extension, the C*-algebra $\K X_{pq} \simeq X_{pq} \otimes_A X_{pq}^*$) is a $\K X_p$-$A$-correspondence by a left action $i_{p}^{pq}$, where $i_e^p = \vphi_p$ and $i_p^p = \id_{\K X_p}$.
\item There are unitary maps
\[
u_{pq, r} \colon X_{pq} \otimes_A X_r \stackrel{\simeq}{\longrightarrow} [ i_{pq}^{pqr}(\K X_{pq}) X_{pqr}] \subseteq X_{pqr}
\]
which are $\K X_{p}$-$A$-adjointable, where $X_{pq} \otimes_A X_r$ is a $\K X_{p}$-$A$-correspondence by the left action $i_{p}^{pq} \otimes \id_{X_r}$; namely
\[
\iota_{p}^{pqr}(S) u_{pq, r} = u_{pq, r}( i_{p}^{pq}(S) \otimes \id_{X_r} ) \foral S \in \K X_p.
\]
\item The maps $\{u_{p,q}\}_{p,q \in P}$ are associative in the sense that the following diagram
\[
\xymatrix@R=2cm@C=2cm{
X_p \otimes_A X_q \otimes_A X_r \ar[rr]^{\id_{X_p} \otimes u_{q,r}} \ar[d]_{u_{p,q} \otimes \id_{X_r}} & & X_p \otimes_A X_{qr} \ar[d]^{u_{p, qr}} \\
X_{pq} \otimes_A X_r \ar[rr]^{u_{pq, r}} & & X_{pqr}
}
\]
is commutative for all $p, q, r \in P$.
\end{enumerate}
\end{definition}

In particular, we have that
\begin{equation} \label{eq:for}
i_p^{pq}(S) u_{p, q} = u_{p, q} (i_p^p(S) \otimes \id_{X_{q}}) = u_{p, q} (S \otimes \id_{X_{q}})
\foral 
S \in \K X_p.
\end{equation}
The left action of $\K X_{p}$ on $X_{p} \otimes_A X_{q}$ and on $[i_{p}^{pq}(\K X_{p}) X_{pq}]$ is non-degenerate and thus equation (\ref{eq:for}) extends to $\L X_{p}$, i.e.,
\begin{equation} \label{eq:for2}
i_p^{pq}(T) u_{p, q} = u_{p, q} (T \otimes \id_{X_{q}})
\foral 
T \in \L X_p.
\end{equation}
The compatibility relations then yield
\begin{equation} \label{eq:comp}
i_{p}^{pqr}(S) u_{p, qr} (\id _{X_p} \otimes u_{q,r}) = i_{pq}^{pqr} i_{p}^{pq}(S) u_{pq,r} (u_{p,q} \otimes \id_{X_r}) \foral S \in \K X_p.
\end{equation}
Indeed for the left hand side we apply equation (\ref{eq:for}) for $q$ in place of $qr$ and $S \in \K X_p$ to obtain
\begin{align*}
i_{p}^{pqr}(S) u_{p, qr} (\id_{X_p} \otimes u_{q,r})
& =
u_{p, qr} (S \otimes u_{q, r}) \\
& =
u_{p, qr} (\id_{X_p} \otimes u_{q, r}) (S \otimes \id_{X_q} \otimes \id_{X_r} ) \\
& =
u_{pq, r} (u_{p,q} \otimes \id_{X_r}) (S \otimes \id_{X_q} \otimes \id_{X_r}),
\end{align*}
where, in the last line, we used the compatibility of the $u$ maps from item (iv) of the definition of the abstract product system.
For the right hand side we apply the extended formula (\ref{eq:for2}) for $pq$ in place of $p$ and $r$ in place of $q$, and for $T = i_{p}^{pq}(S) \in \L X_{pq}$ so that
\begin{align*}
i_{pq}^{pqr} \left( i_{p}^{pq}(S) \right) u_{pq,r} (u_{p,q} \otimes \id_{X_r})
& =
u_{pq, r} (i_{p}^{pq}(S) \otimes \id_{X_r}) (u_{p,q} \otimes \id_{X_r}) \\
& =
u_{pq, r}(i_{p}^{pq}(S) u_{p,q} \otimes \id_{X_r}) \\
& =
u_{pq, r} (u_{p,q} \otimes \id_{X_r}) (S \otimes \id_{X_q} \otimes \id_{X_r}),
\end{align*}
where, in the last line, we applied once more equation (\ref{eq:for}).
This proves equation (\ref{eq:comp}).
Note here that if the maps $u_{p,q}$ are unitaries onto $X_{pq}$ then equation (\ref{eq:comp}) gives the familiar compatibility relation $i_{p}^{pqr} = i_{pq}^{pqr} i_{p}^{pq}$.

\begin{example}
The definition of abstract product systems we use here recovers the one by Fowler \cite[Definition 2.1]{Fow02}.
Let $P$ be a unital discrete left-cancellative semigroup.
Suppose that $X = \{X_p\}_{p \in P}$ is a family of C*-correspondences over the same C*-algebra $A$ such that:
\begin{enumerate}
\item $X_e = A$.
\item There are unitaries $X_p \otimes_A X_q \simeq_{u_{p,q}} X_{pq}$ for every $p, q \in P \setminus \{e\}$.
\item There are unitaries $A \otimes_A X_p \simeq_{u_{e, p}} \ol{A \cdot X_p}$ and $X_p \otimes_A A \simeq_{u_{p, e}} \ol{X_p \cdot A} = X_p$ for all $p \in P$.
\item The family of unitaries is associative in the sense that
\[
u_{pq, r} (u_{p,q} \otimes \id_{X_r}) = u_{p, qr} (\id_{X_p} \otimes u_{q,r}) \foral p,q,r \in P.
\]
\end{enumerate}
To make a distinction we will refer to such a family as a \emph{Fowler product system}.
The property $X_{pq} \simeq X_p \otimes_A X_q$ makes $X_{pq}$ a $\K X_p$-$A$-correspondence by $i_p^{pq}(S) := u_{p, q}( S \otimes \id_{X_q} ) u_{p, q}^*$.
Those families trivially preserve this structure, and thus a Fowler product system forms an abstract product system in our sense.
\end{example}

\begin{remark}\label{R:abstract}
We claim that an abstract product system $X$ attains an isometric representation in a C*-algebra as a concrete product system.
This will be achieved through a Fock space construction.
First we recall that every $\xi_p \in X_p$ defines an adjointable operator 
\[
\xi_p^* \colon X_p \otimes_A X_q \longrightarrow X_q ; \eta_p \otimes \eta_q \mapsto \vphi_{q}(\sca{\xi_p, \eta_p}) \eta_q.
\]
For every $\xi_p \in X_p$ we define the left creation operator
\[
\tau_{q}^{pq}(\xi_p) \colon X_q \longrightarrow X_{pq} ; \eta_q \mapsto u_{p,q}(\xi_p \otimes \eta_q).
\]
In particular, for $\xi_p, \eta_p, \zeta_p \in X_p$ we can use that the map $u_{p,q}$ is $\K X_p$-adjointable and get
\[
\tau_{q}^{pq}(\xi_p \sca{\eta_p, \zeta_p}) \eta_q 
=
u_{p,q}(\xi_p \sca{\eta_p, \zeta_p} \otimes \eta_q)
=
i_{p}^{pq}(\theta_{\xi_p, \eta_p}) u_{p,q}( \zeta_p \otimes \eta_q ).
\]
This map is $A$-linear and direct computations give that it is in $\L(X_q, X_{pq})$ with
\[
\tau_q^{pq}(\xi_p \sca{\eta_p, \zeta_p})^* \eta_{pq} = \zeta_p^* \left( u_{p,q}^*( i_{p}^{pq}(\theta_{\xi_p, \eta_p})^* \eta_{pq} ) \right).
\]
Now the association
\[
\tau_q^{pq} \colon X_p \sca{X_p, X_p} \longrightarrow \L(X_q, X_{pq})
\]
is $A$-linear and contractive, and thus it extends to the closure of $X_p \sca{X_p, X_p}$ which is $X_p$.
Therefore, we conclude that $\tau_q^{pq}(\xi_p)$ is an adjointable operator in $\L(X_q, X_{pq})$.

We can then form the full Fock space $\F X := \sum^\oplus_{r \in P} X_r$ and define the Fock representation 
\[
\la(\xi_p) := \sum_{r \in P} \tau_{r}^{pr}(\xi_p),
\]
where the sum is taken in the s*-topology.
We show that the family $\{\la(X_p)\}_{p \in P}$ satisfies the items of Proposition \ref{P:equivalence}.
Items (1) and (2) of Proposition \ref{P:equivalence} are straightforward.
Associativity of the maps $\{u_{p,q}\}_{p,q \in P}$ yields 
\[
\la(X_p) \la(X_q) \subseteq \la( u_{p,q}(X_p \otimes_A X_q) ) \subseteq \la(X_{pq}),
\]
which is item (3) of Proposition \ref{P:equivalence}.
Indeed, a direct computation for $\xi_p \in X_p$, $\xi_q \in X_q$ and $\eta_r \in X_r$ gives that
\begin{align*}
\la(\xi_p) \la(\xi_q) \eta_r
& =
u_{p, qr} (\id_{X_p} \otimes u_{q,r})( \xi_p \otimes \xi_q \otimes \eta_r ) \\
& =
u_{pq, r} (u_{p,q} \otimes \id_{X_r})( \xi_p \otimes \xi_q \otimes \eta_r )
=
\la( u_{p,q}(\xi_p \otimes \xi_q) ) \eta_r.
\end{align*}
Moreover, every map $u_{pq, r}$ is $\K X_p$-$A$-adjointable, and thus we get that
\begin{align*}
\la(X_p) \la(X_p)^* \la(X_{pq}) 
\subseteq \la(\K X_p \cdot X_{pq}) 
\subseteq \la(u_{p,q}(X_p \otimes_A X_q)) 
= [\la(X_p) \la(X_q)],
\end{align*}
which is item (4) of Proposition \ref{P:equivalence}.
Indeed, first we show that
\[
\la(x_p) \la(y_p)^* = \sumoplus_{r \in P} i_{p}^{pr}( \theta_{x_p, y_p} ) \equiv \la( \theta_{x_p, y_p} ) \in \L \left( \sumoplus_{r \in P} X_r \right).
\]
Since $X_p X_p^* X_p$ is dense in $X_p$ it suffices to show this for $y_p := \xi_p \sca{\eta_p, \zeta_p}$.
One one hand it is clear that
\[
\la(x_p) \la(y_p)^* \eta_r = 0 \textup{ whenever } r \notin p P.
\]
On the other hand, fix $\eta_{pr}$ so that $i_p^{pr}(\theta_{\xi_p, \eta_p})^* \eta_{pr} \in u_{p,r}(X_p \otimes_A X_r)$.
Thus we can write
\[
u_{p,r}^*( i_{p}^{pr}(\theta_{\xi_p, \eta_p})^* \eta_{pr}) = \sum_j v_p^{(j)} \otimes w_r^{(j)} \in X_p \otimes_A X_r,
\]
where the summation symbol is a short for convergence with respect to finite subsets (rather than series convergence).
Then we get that
\begin{align*}
\la(x_p) \la(y_p)^* \eta_{pr}
& =
u_{p,r} \bigg(\id_{X_p} \otimes \zeta_p^*\bigg) \bigg( x_p \otimes u_{p,r}^*\big( i_{p}^{pr}(\theta_{\xi_p, \eta_p})^* \eta_{pr} \big) \bigg) \\
& =
\sum_j u_{p,r} \bigg(\id_{X_p} \otimes \zeta_p^*\bigg) \bigg( x_p \otimes (v_p^{(j)} \otimes w_r^{(j)}) \big) \bigg) \\
& =
\sum_j u_{p,r}\big( x_p \otimes \vphi_{r}(\sca{\zeta_p, v_p^{(j)}}) w_r^{(j)} \big) 
 =
\sum_j u_{p,r}\big( x_p \sca{\zeta_p, v_p^{(j)}} \otimes w_r^{(j)} \big) \\
& =
\sum_j u_{p,r} \big( (\theta_{x_p, \zeta_p} \otimes \id_{X_r})(v_p^{(j)} \otimes w_r^{(j)}) \big) 
 =
i_{p}^{pr}(\theta_{x_p, \zeta_p}) \big(u_{p,r}( \sum_j v_p^{(j)} \otimes w_r^{(j)} ) \big) \\
& =
i_{p}^{pr}(\theta_{x_p, \zeta_p}) i_{p}^{pr}(\theta_{\eta_p, \xi_p}) \eta_{pr} 
=
i_{p}^{pr}(\theta_{x_p, y_p}) \eta_{pr},
\end{align*}
where we used that $u_{p,r}$ is a unitary onto its range.
Now we can deduce that
\[
\la(x_p) \la(y_p)^* \la(z_{pq}) = \la\big( i_{p}^{pq}(\theta_{x_p, y_p}) (z_{pq}) \big)
\]
for every $x_p, y_p \in X_p$ and $z_{pq}$.
Indeed, for every $\eta_r \in X_r$ we directly compute
\begin{align*}
\la(x_p) \la(y_p)^* \la(z_{pq}) \eta_r
& =
\la(x_p) \la(y_p)^* u_{pq, r}(z_{pq} \otimes \eta_r) 
=
i_{p}^{pqr}(\theta_{x_p, y_p}) u_{pq, r}(z_{pq} \otimes \eta_r) \\
& =
u_{pq, r}\bigg( (i_{p}^{pq}(\theta_{x_p, y_p}) \otimes \id_{X_r}) (z_{pq} \otimes \eta_r) \bigg) \\
& =
u_{pq, r}\big( i_{p}^{pq}(\theta_{x_p, y_p})z_{pq} \otimes \eta_r \big) 
=
\la \big( i_{p}^{pq}(\theta_{x_p, y_p}) z_{pq} \big) \eta_r.
\end{align*}

Conversely every concrete product system $\{X_p\}_{p \in P}$ in some $\B(H)$ forms an abstract product system.
First recall that the stabilized tensor product between two C*-correspondences is independent of the faithful representation on a common Hilbert space.
Thus the maps
\[
u_{p,q} \colon X_p \otimes_A X_q \longrightarrow [X_p \cdot X_q] \subseteq \B(H)
\]
are unitaries onto $[X_p X_q]$, and the family $\{u_{p,q}\}_{p,q \in P}$ satisfies associativity.
This is compatible with the identification of $\K X_p$ with $X_p \otimes_A X_p^*$, and in turn with $[X_p \cdot X_p^*]$.
We then define the left actions $i_p^{pq}$ by multiplication, i.e.,
\[
i_p^{pq}(\theta_{\xi_p, \eta_p}) \ze_{pq} = \xi_p \eta_p^* \ze_{pq} \in X_p X_p^* X_{pq} \subseteq X_{pq},
\]
where we used property (4) from Proposition \ref{P:equivalence}.
It follows that every $u_{pq, r}$ is a right Hilbert $A$-module map and in particular
\[
u_{pq, r}( X_{pq} \otimes_A X_r) = [X_{pq} X_r] = [X_{pq} X_{pq}^* X_{pqr}] = [i_{pq}^{pqr}(\K X_{pq}) X_{pqr}],
\]
where we used property (4') from Proposition \ref{P:equivalence}.
Finally, we need to check the compatibility relation, and it suffices to do so for $S = \theta_{\xi_p, \eta_p}$.
A direct computation yields
\[
i_p^{pqr}(\theta_{\xi_p, \eta_p}) u_{pq, r}( \ze_{pq} \otimes \ze_r )
=
\xi_p \eta_p^* \ze_{pq} \ze_r
=
u_{pq, r}( i_p^{pq}(\theta_{\xi_p, \eta_p}) \otimes \id_{X_r} ) (\ze_{pq} \otimes \ze_r).
\]
Hence the triple $\big( \{X_p\}, \{i_p^{pq}\}, \{u_{p,q}\} \big)$ defines an abstract product system structure.
\end{remark}

\begin{remark}\label{R:f is f}
If $X$ is an abstract product system, then $X$ and $\la(X)$ have unitarily equivalent Fock representations.
Towards this end let $\La$ be the Fock space representation of the concrete product system $\la(X)$ on $\F \la(X)$.
By using that $\la$ is injective on each $X_p$ we can define an isometric map
\[
U \colon \F \la(X) \longrightarrow \F X; \la(\xi_p) \mapsto \xi_p,
\]
first on finite sums and then extending by continuity.
In particular, $U$ is a unitary so that $U \La(\la(\xi_p)) = \la(\xi_p) U$.
Hence $U$ induces a spatial $*$-isomorphism $\ca(\La) \simeq \ca(\la)$ that fixes $X$.
\end{remark}

\subsection{Representations of product systems}

We next define the operators associated to a product system $X$.
As we will see there is no difference in assuming that $X$ is concrete.

\begin{definition}\label{D:Toeplitz}
Let $P$ be a unital discrete left-cancellative semigroup and let $X$ be a concrete product system over $P$.
A \emph{Toeplitz representation $t = \{t_p\}_{p \in P}$ of $X$} consists of a family of linear maps $t_p$ of $X_p$ such that
\begin{enumerate}
\item $t_e$ is a $*$-representation of $A := X_e$.
\item $t_p(\xi_p) t_q(\xi_q) = t_{pq}(\xi_p \xi_q)$ for all $\xi_p \in X_p$ and $\xi_q \in X_q$.
\item $t_p(\xi_p)^* t_{pq}(\xi_{pq}) = t_q( \xi_p^* \xi_{pq})$ for all $\xi_p \in X_p$ and $\xi_{pq} \in X_{pq}$.
\end{enumerate}
A representation is called \emph{injective} if it is injective on $A$, and thus it is injective on each $X_p$.
The \emph{Toeplitz algebra $\T(X)$ of $X$} is the universal C*-algebra generated by $A$ and $X$ with respect to the representations of $X$.
\end{definition}

By definition every $t_p$ is a representation of the C*-correspondence $X_p$ over $A$.
The definition of a Toeplitz representation is transferable to the abstract setting.

\begin{definition}
Let $P$ be a unital discrete left-cancellative semigroup and let $X$ be an abstract product system.
A \emph{Toeplitz representation $t = \{t_p\}_{p \in P}$ of $X$} consists of a family of linear maps $t_p$ of $X_p$ such that:
\begin{enumerate}
\item $(t_e, t_p, t_e)$ is a representation of the C*-correspondence $X_p$ for every $p \in P$.
\item $(t_p, t_{pq}, t_e)$ is a $\K X_p$-$A$-bimodule map on $(\K X_p, X_{pq}, A)$ for every $p,q \in P$, where $t_p$ is the representation of $\K X_p$ induced by $t_p$.
\item $t_p(\xi_p) t_q(\xi_q) = t_{pq}(u_{p,q}(\xi_p \otimes \xi_q))$ for every $p,q \in P$.
\end{enumerate}
\end{definition}

\begin{remark}
Remark \ref{R:abstract} shows that the Fock representation is a Toeplitz representation of an abstract product system $X$.
Moreover, since $\la$ is injective we have that the Toeplitz representations of an abstract product system $X$ are in bijection with the Toeplitz representations of the concrete product system $\la(X)$.
Due to Remark \ref{R:f is f} this holds for representations that factor through the Fock space construction.
Since we study the representations of a product system, there is thus no actual distinction between abstract and concrete product systems.
Henceforth we will refer to an abstract or a concrete product system, simply as a \emph{product system}.
\end{remark}

To relax notation, we will simply write $t$ for any $t_p$.
If $t$ defines a representation of $X$, then we will write $t_* \colon \T(X) \to \B(H)$ for the induced $*$-representation on $\T(X)$ and
\[
\ca(t) \equiv \ca(t_*) := t_*(\T(X)) = \ca(t(\xi_p) \mid \xi_p \in X_p; p \in P).
\]

Henceforth we will assume that $P$ is a unital subsemigroup of a group $G$.
Suppose that $\T(X)$ is faithfully represented as a representation $\wt{t}$ of $X$.
By the universal property of $\T(X)$ there is a canonical $*$-homomorphism
\[
\wt{\de} \colon \T(X) \longrightarrow \T(X) \otimes \ca_{\max}(G) : \wt{t}(\xi_p) \longmapsto \wt{t}(\xi_p) \otimes u_p.
\]
Sehnem \cite[Lemma 2.2]{Seh18} shows that $\wt{\de}$ is a non-degenerate and faithful coaction of $\T(X)$ when $X$ is a non-degenerate Fowler product system, with each spectral space $\T(X)_g$ with $g \in G$ be given by the products
\[
\wt{t}(\xi_{p_1})^* \wt{t}(\xi_{q_1}) \cdots \wt{t}(\xi_{p_n})^* \wt{t}(\xi_{q_n}) \qfor p_1^{-1} q_1 \cdots p_n^{-1} q_n = g.
\]
In light of Remark \ref{R:nd cis} we can extend this result to the generality of our context.

\begin{proposition}\label{P:t coaction}
Let $P$ be a unital subsemigroup of a discrete group $G$ and let $X$ be a product system over $P$.
Let $\wt{t}_*$ be a faithful representation of $\T(X)$.
Then there is a coaction of $G$ on $\T(X)$ such that 
\[
\wt{\de} \colon \T(X) \longrightarrow \T(X) \otimes \ca_{\max}(G) ; \wt{t}(\xi_p) \longmapsto \wt{t}(\xi_p) \otimes u_{p}.
\]
Moreover, each spectral space $[\T(X)]_g$ with $g \in G$ is given by the products of the form
\[
\wt{t}(\xi_{p_1})^* \wt{t}(\xi_{q_1}) \cdots \wt{t}(\xi_{p_n})^* \wt{t}(\xi_{q_n}) \qfor p_1^{-1} q_1 \cdots p_n^{-1} q_n = g.
\]
where $\wt{t}(\xi_{p_i}) = I$ or $\wt{t}(\xi_{q_i}) = I$ whenever $p_i = e_G$ or $q_i = e_G$, and $g \neq e_G$.
\end{proposition}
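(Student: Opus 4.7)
The plan is to invoke the universal property of $\T(X)$, generalizing the approach of \cite[Lemma 2.2]{Seh18} to our product system setting. First, I would verify that $\{\wt{t}(\xi_p) \otimes u_p : p \in P\}$ defines a Toeplitz representation of $X$ in $\T(X) \otimes \ca_{\max}(G)$. Axiom (i) of Definition \ref{D:Toeplitz} is immediate from $u_{e_P} = 1$; axiom (ii) follows from $u_p u_q = u_{pq}$; and axiom (iii) follows from $u_p^* u_{pq} = u_{p^{-1}} u_{pq} = u_q$. By the universal property, this extends to a unique $*$-homomorphism $\wt{\de} \colon \T(X) \to \T(X) \otimes \ca_{\max}(G)$ with $\wt{\de}(\wt{t}(\xi_p)) = \wt{t}(\xi_p) \otimes u_p$.

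Second, I would check that $\wt{\de}$ is a coaction. The trivial character $\chi$ on $\ca_{\max}(G)$ satisfies $\chi(u_g) = 1$ for all $g$, so $(\id \otimes \chi) \circ \wt{\de}$ fixes every generator $\wt{t}(\xi_p)$, whence $\wt{\de}$ is faithful. The coaction identity $(\wt{\de} \otimes \id) \wt{\de} = (\id \otimes \De) \wt{\de}$ follows on generators from $\De(u_p) = u_p \otimes u_p$, and then extends to $\T(X)$ by multiplicativity and continuity. Non-degeneracy of $\wt{\de}$ on $\T(X)$ is then automatic from Remark \ref{R:nd cis}.

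Finally, for the description of the spectral spaces, I would argue that $\T(X)$ is densely spanned by monomials in the generators $\{\wt{t}(\xi_p), \wt{t}(\xi_p)^*\}$. Using axioms (ii) and (iii) of Definition \ref{D:Toeplitz}, any two consecutive same-type factors collapse via $\wt{t}(\xi_p) \wt{t}(\eta_q) = \wt{t}(\xi_p \cdot \eta_q) \in \wt{t}(X_{pq})$ and $\wt{t}(\xi_p)^* \wt{t}(\xi_{pq}) = \wt{t}(\xi_p^* \xi_{pq}) \in \wt{t}(X_q)$. Each resulting monomial is then placed into the strict alternating form $\wt{t}(\xi_{p_1})^* \wt{t}(\xi_{q_1}) \cdots \wt{t}(\xi_{p_n})^* \wt{t}(\xi_{q_n})$ by inserting identity factors at positions whose semigroup labels equal $e_P$; this accounts for the allowance that $\wt{t}(\xi_{p_i}) = I$ or $\wt{t}(\xi_{q_i}) = I$ when $p_i = e_G$ or $q_i = e_G$. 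Each such alternating monomial is sent by $\wt{\de}$ to itself tensored with $u_{p_1^{-1} q_1 \cdots p_n^{-1} q_n}$, hence lies in $[\T(X)]_g$ for that specific $g$. The reverse containment $[\T(X)]_g \subseteq \ol{\spn}\{\text{alternating monomials of spectrum } g\}$ follows by extracting the $g$-component of an approximation by finite sums of monomials via the topological grading induced by $\wt{\de}$.

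The main obstacle will be handling the spanning argument without Fowler's density axiom $[X_p \cdot X_q] = X_{pq}$. In our setting a product $\wt{t}(\xi_p) \wt{t}(\eta_q)$ lands in $\wt{t}(X_{pq})$ but typically does not span a dense subspace of it; hence there is no cleaner reduction to products of single-fibre elements, and the identity-padding convention is necessary for a uniform description of the spectral subspaces. This is precisely where our relaxation of Fowler's definition shows its effect, and is resolved by allowing the $e_G$-positions to carry the unit $I$ rather than a fibre element.
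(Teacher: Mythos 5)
Your proposal is correct and follows essentially the same route as the paper: the universal property of $\T(X)$ yields the $*$-homomorphism $\wt{\de}$, the left inverse $\id \otimes \chi$ gives faithfulness, and the Fourier coefficients $\id \otimes E_g$ identify each spectral subspace as the closed span of the alternating monomials of degree $g$ (with the identity-padding convention), whence the fibres densely span $\T(X)$ and $\wt{\de}$ is a coaction. Your additional care about the failure of Fowler's density axiom is well placed but does not change the argument, since only the inclusion $X_p \cdot X_q \subseteq X_{pq}$ and the collapsing of same-type factors are needed.
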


\begin{proof}
By the universal property of $\T(X)$ we obtain a well-defined $*$-homomorphism $\wt{\de}$ with left inverse $\id \otimes \chi$.
By considering $\id \otimes E_g$ for the Fourier co-efficients $E_g \colon \ca_{\max}(G) \to \bC$ we derive that each subspace
\[
[\T(X)]_g = \{x \in \T(X) \mid \wt{\de}(x) = x \otimes u_g\}
\]
is densely spanned by the products of the statement.
Since those subspaces densely span $\T(X)$, we derive that $\wt{\de}$ is a coaction on $\T(X)$.
\end{proof}

Let $t$ be a representation of $X$.
For $\Bx \in \J$ we define the \emph{$\bo{K}$-core on $\Bx$} of $\ca(t)$ by
\begin{align*}
\bo{K}_{\Bx, t_\ast} := \ol{\spn} \big\{t(X_{p_1})^* t(X_{q_1}) \cdots t(X_{p_n})^* t(X_{q_n}) \mid p_1^{-1} q_1 \cdots & p_n^{-1} q_n = e_G; \\
& q_n^{-1} p_n \dots q_1^{-1} p_1 P = \Bx \big\}.
\end{align*}
We do not claim that $\bo{K}_{\mt, t_{\ast}} = (0)$.
For a finite $\cap$-closed $\F \subseteq \J$ we define the \emph{$\bo{B}$-core on $\F$} by
\[
\bo{B}_{\F, t_\ast} := \sum_{\Bx \in \F} \bo{K}_{\Bx, t_\ast}.
\]

\begin{proposition}\label{P:ind lim}
Let $P$ be a unital subsemigroup of a discrete group $G$ and let $X$ be a product system over $P$.
Then the following hold for a representation $t$ of $X$:
\begin{enumerate}
\item[\textup{(a)}] For every $\Bx \in \J$, the space $\bo{K}_{\Bx, t_\ast}$ is a closed C*-subalgebra of $\ca(t)$.

\item[\textup{(b)}]
If $\F \subseteq \J$ is a finite $\cap$-closed set with $\mt \in \F$, then $\bo{B}_{\F, t_\ast}$ is a C*-subalgebra of $\ca(t)$.

\item[\textup{(c)}]
If $t$ is equivariant, then 
\[
[\ca(t)]_e = \ol{ \bigcup \{ \bo{B}_{\F, t_\ast} \mid \F \subseteq \J \textup{ finite and $\cap$-closed}, \mt \in \F\} }
\]
as the inductive limit of C*-algebras.
\end{enumerate}
\end{proposition}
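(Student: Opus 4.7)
The plan is to verify (a)--(c) sequentially, leveraging the algebraic calculus of constructible ideals in $\J$ together with Li's identification of their associated projections in $\ca_\la(P)$.

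For (a), closedness is built into the definition. Stability under products follows from concatenation: given admissible words with ideals $\Bx$ and $\By$, their concatenation satisfies the joint $G$-constraint $e_G \cdot e_G = e_G$, and by the intersection formula
\[
s_m^{-1} r_m \cdots s_1^{-1} r_1 q_n^{-1} p_n \cdots q_1^{-1} p_1 P = \Bx \cap \By
\]
from the semigroup preliminaries its associated ideal is $\Bx \cap \By$. Hence $\bo{K}_{\Bx, t_\ast}\cdot \bo{K}_{\By, t_\ast} \subseteq \bo{K}_{\Bx \cap \By, t_\ast}$, and setting $\By = \Bx$ gives closure under products. For the involution, the adjoint of a monomial indexed by $(p_i, q_i)_{i=1}^n$ is the monomial indexed by $(q_{n+1-i}, p_{n+1-i})_{i=1}^n$; the new $G$-constraint is the inverse of the original, hence still $e_G$, and the newly associated ideal coincides with $\Bx$ because in $\ca_\la(P)$ the projections $V_{p_1}^* V_{q_1} \cdots V_{p_n}^* V_{q_n}$ and $V_{q_n}^* V_{p_n} \cdots V_{q_1}^* V_{p_1}$ are mutual adjoints and each self-adjoint, so they coincide, and Li's formula then forces their presentations as constructible ideals to describe the same subset of $P$.

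For (b), I would induct on $|\F|$. The base case $\F = \{\mt\}$ is handled by (a). For $|\F| > 1$, pick a maximal $\Bx \in \F$ (necessarily $\Bx \neq \mt$) and set $\F' := \F \setminus \{\Bx\}$; maximality of $\Bx$ combined with $\cap$-closedness of $\F$ ensures $\F'$ is still $\cap$-closed and contains $\mt$, so $\bo{B}_{\F', t_\ast}$ is a C*-subalgebra by induction. For every $\By \in \F'$ the intersection $\By \cap \Bx$ lies in $\F \setminus \{\Bx\} = \F'$, so (a) gives
\[
\bo{B}_{\F', t_\ast} \cdot \bo{K}_{\Bx, t_\ast} + \bo{K}_{\Bx, t_\ast} \cdot \bo{B}_{\F', t_\ast} \subseteq \bo{B}_{\F', t_\ast},
\]
making $\bo{B}_{\F', t_\ast}$ a closed two-sided ideal of the closure $\ol{D}$ of the $*$-subalgebra $D := \bo{B}_{\F', t_\ast} + \bo{K}_{\Bx, t_\ast}$. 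The quotient map $\pi\colon \ol{D} \to \ol{D}/\bo{B}_{\F', t_\ast}$ sends $\bo{K}_{\Bx, t_\ast}$ onto a C*-subalgebra that equals $\pi(\ol{D})$ by density of $D$, and the identity $\ol{D} = \pi^{-1}(\pi(\bo{K}_{\Bx, t_\ast})) = \bo{K}_{\Bx, t_\ast} + \bo{B}_{\F', t_\ast} = D$ shows that $D$ is already closed.

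For (c), Proposition \ref{P:t coaction} shows that $[\ca(t)]_e$ is the closed span of monomials whose parameters satisfy $p_1^{-1} q_1 \cdots p_n^{-1} q_n = e_G$; each such monomial lies in $\bo{K}_{\Bx, t_\ast} \subseteq \bo{B}_{\{\Bx, \mt\}, t_\ast}$ for some $\Bx \in \J$, and conversely each $\bo{B}_{\F, t_\ast}$ sits in $[\ca(t)]_e$ by equivariance. Since finite $\cap$-closed subsets of $\J$ containing $\mt$ form a directed system under inclusion, with corresponding C*-algebra inclusions by (b), the inductive-limit description drops out. I expect the main technical hurdle to be the closedness argument in (b); once the ideal-intersection formula for concatenations and the self-adjointness of the constructible projections in $\ca_\la(P)$ are in hand, the remainder is bookkeeping along the Sehnem-type organization of the cores.
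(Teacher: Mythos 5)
Your proof is correct and follows essentially the same route as the paper: products of cores land in the core of the intersection via the constructible-ideal calculus, the $\bo{B}$-cores are handled by induction along the inclusion order (your removal of a maximal element is the reverse of the paper's order-compatible enumeration) using the fact that a closed ideal plus a C*-subalgebra is closed, and part (c) follows from the description of the spectral subspaces together with directedness of the finite $\cap$-closed families. The only real variation is in the self-adjointness of $\bo{K}_{\Bx, t_\ast}$, where you identify the ideal of the adjoint word with $\Bx$ by comparing the projections $E_{[\cdot]}$ in $\ca_\la(P)$ via Li's formula, whereas the paper computes directly that $q_n^{-1}p_n\cdots q_1^{-1}p_1 P = p_1^{-1}q_1\cdots p_n^{-1}q_n P$ as an intersection of translates of $P$; both arguments are valid.
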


\begin{proof}
(a) First we show that $\bo{K}_{\Bx, t_\ast}$ is a selfadjoint space.
Towards this end, consider
\[
t(X_{p_1})^* t(X_{q_1}) \cdots t(X_{p_n})^* t(X_{q_n}) \subseteq \bo{K}_{\Bx, t_\ast},
\]
for $p_i, q_i \in P$ such that $p_1^{-1} q_1 \cdots p_n^{-1} q_n = e_G$ and $q_n^{-1} p_n \dots q_1^{-1} p_1 P = \Bx$.
We then have that
\begin{align*}
q_n^{-1} p_n \dots q_1^{-1} p_1 P 
& =
P \cap (q_n^{-1} \cdot P) \cap (q_n^{-1} \cdot p_n \cdot P) \cap \cdots \cap (q_n^{-1} \cdot p_n \cdots q_1^{-1} \cdot p_1 \cdot P) \\
& =
(p_1^{-1} \cdot q_1 \cdots p_n^{-1} \cdot q_n \cdot P) \cap (p_1^{-1} \cdot q_1 \cdots p_n^{-1} \cdot P) \cap \cdots \cap P 
=
p_1^{-1} q_1 \dots p_n^{-1} q_n P,
\end{align*}
where the multiplications take place in $G$.
From this we derive that
\[
\left( t(X_{p_1})^* t(X_{q_1}) \cdots t(X_{p_n})^* t(X_{q_n}) \right)^*
=
t(X_{q_n})^* t(X_{p_n}) \cdots t(X_{q_1})^* t(X_{p_1}) 
\subseteq 
\bo{K}_{\Bx, t_\ast}
\]
as well, and thus $\bo{K}_{\Bx, t_\ast}$ is selfadjoint.
Next recall that if we have
\[
\Bx = q_n^{-1} p_n \dots q_1^{-1} p_1 P \qand \By = s_m^{-1} r_m \dots s_1^{-1} r_1 P,
\]
with $p_1^{-1} q_1 \cdots p_n^{-1} q_n = e_G$ and $r_1^{-1} s_1 \cdots r_m^{-1} s_m = e_G$, then
\[
\Bx \cap \By = q_n^{-1} p_n \dots q_1^{-1} p_1 s_m^{-1} r_m \dots s_1^{-1} r_1 P,
\]
and of course $q_n^{-1} p_n \dots q_1^{-1} p_1 s_m^{-1} r_m \dots s_1^{-1} r_1 = e_G$.
Hence we obtain that
\[
\bo{K}_{\Bx, t_\ast} \cdot \bo{K}_{\By, t_\ast} \subseteq \bo{K}_{\Bx \cap \By, t_\ast}.
\]
From this we derive that every $\bo{K}$-core is an algebra (as $\Bx \cap \Bx = \Bx$).

\smallskip

\noindent
(b) Let an enumeration $\F = \{\Bx_0, \dots, \Bx_n\}$ that dominates the partial order of inclusions in $\F$, so that $\Bx_0 = \mt$.
For $k \leq n$ we write $\F_k := \{\Bx_0, \dots, \Bx_k\}$.
Due to the enumeration and since $\F$ is $\cap$-closed we have that every $\F_k$ is $\cap$-closed.
From part (a), every $\bo{K}_{\Bx_n, t_\ast}$ is a C*-algebra and
\[
\bo{K}_{\Bx_0, t_{\ast}} \cdot \bo{K}_{\Bx_1, t_\ast} \subseteq \bo{K}_{\Bx_0, t_{\ast}},
\]
since $\Bx_0 \cap\Bx_1 = \Bx_0$.
Hence we have that
\[
\bo{K}_{\Bx_0, t_\ast} \lhd \bo{B}_{\{\Bx_0, \Bx_1\}, t_\ast} = \bo{K}_{\Bx_0, t_{\ast}} + \bo{K}_{\Bx_1, t_{\ast}},
\]
and thus $\bo{B}_{\{\Bx_0, \Bx_1\}, t_\ast}$ is closed as the sum of an ideal with a C*-subalgebra.
Suppose now that we have shown that $\bo{B}_{\F_k, t_\ast}$ is closed for $\F_k := \{\Bx_0, \Bx_1, \dots, \Bx_k\}$ for $k < n$.
If $m \leq k$ then $\Bx_m \not\supseteq \bo{x}_{k+1}$, i.e., $\Bx_{m} \cap \Bx_{k+1} \neq \bo{x}_{k+1}$, and so $\Bx_{m} \cap \Bx_{k+1} \in \F_{k}$.
Hence we have that
\[
\bo{B}_{\F_k, t_\ast} \lhd \bo{B}_{\F_{k+1}, t_\ast} = \bo{B}_{\F_k, t_\ast} + \bo{K}_{\Bx_{k+1}, t_\ast},
\]
and thus $\bo{B}_{\F_{k+1}, t_\ast}$ is closed as the sum of an ideal with a C*-subalgebra.
Inducting on $k$ shows that $\bo{B}_{\F, t_\ast}$ is a C*-subalgebra.

\smallskip

\noindent
(c) By assumption we have that every non-zero element of the form
\[
t(\xi_{p_1})^* t(\xi_{q_1}) \cdots t(\xi_{p_n})^* t(\xi_{q_n}) \qfor p_1^{-1} q_1 \cdots p_n^{-1} q_n = e_G
\]
sits inside $\bo{K}_{\Bx, t_\ast}$ for $\Bx := q_n^{-1} p_n \dots q_1^{-1} p_1 P$.
Proposition \ref{P:t coaction} yields that $[\ca(t)]_e$ is thus densely spanned by the $\bo{K}$-cores.
Thus by appending the empty set and all intersections to any finite subset of $\J$ gives the inductive limit form.
\end{proof}

\subsection{Exel's Bessel inequality}\label{Ss:Bessel}

The Bessel inequality is an important tool in the Fell bundles theory; see \cite[Theorem 3.3]{Exe97} and/or \cite[Lemma 17.14]{Exe17}.
We will see here an application to C*-algebras of product systems.
Towards this end let $t$ be an equivariant injective representation of $X$ in some $\B(H)$ with conditional expectation $E_{t_\ast}$, and consider the induced Fell bundle
\[
\B_{t_\ast} = \{ [\ca(t)]_g \}_{g \in G}.
\]
Let $Y_{t_\ast}$ be the $\B_e$-Hilbert module with respect to the inner product structure
\[
\sca{\eta_1, \eta_2}_{Y_{t_\ast}} := E_{t_{\ast}}(\eta_1^* \eta_2) \foral \eta_1, \eta_2 \in \sum_g \B_{t_\ast, g}.
\]
By construction we have that $\sum_g \B_{t_\ast, g} \hookrightarrow Y_{t_\ast}$, and in turn in \cite[Theorem 3.3]{Exe97} it is shown that $Y_{t_\ast}$ is unitarily equivalent to $\ell^2(\B_{t_\ast})$ by the map $U \colon b_g \mapsto j(b_g)$, for the inclusion $j \colon \B_{t_\ast, g} \hookrightarrow \ell^2(\B_{t_\ast})$.
Moreover, in the proof of \cite[Theorem 3.3]{Exe97} it is established that
\[
\sca{c x, c x}_{Y_{t_\ast}} \leq \nor{c}^2_{\B(H)} \sca{x, x}_{Y_{t_\ast}} \foral c \in \ca(t), x \in \sum_g \B_{t_\ast, g}.
\]
Applying on $M_n(\ca(t))$ yields a matricial Bessel inequality on $\sum_{k=1}^n Y_{t_\ast}$ given by
\begin{equation*}
\sca{ [c^{ij}] [x^k], [c^{ij}] [x^k]}_{\sum_{k=1}^n Y_{t_\ast}} \leq \nor{[c^{ij}]}^2_{M_n(\B(H))} \sca{[x^k], [x^k]}_{\sum_{k=1}^n Y_{t_\ast}}
\end{equation*}
for $c^{ij} \in \ca(t)$ and $x^k \in \sum_g \B_{t_\ast, g}$, $i, j, k = 1, \dots, n$.

For a product system version, let $Y_t$ be the $A$-submodule of $Y_{t_\ast}$ densely spanned just by the $t(X_p)$.
Since $t$ is injective on $X$ we may see $\F X \hookrightarrow \ell^2(\B_{t_\ast})$ and thus $Y_t \simeq^U \F X$.
Let $i, j, k \in \{1, \dots, n\}$, and consider
\[
b^{ij} \in \T(X) \qand {\eta}^k := \sum_{q \in P} t(\eta^k_q) \quad \textup{such that} \quad t_\ast(b^{ik}) \eta^k \in \sum_p t(X_p).
\]
Then $[t_\ast(b^{ij})] [\eta^k] \in \sum_{k=1}^n Y_{t}$ and, by restriction, Bessel's inequality yields
\begin{align*}
\sca{ [ t_\ast(b^{ij}) ] [\eta^k], [ t_\ast(b^{ij}) ] [\eta^k] }_{\sum\limits_{k=1}^n Y_t}
& =
\sca{ [ t_\ast(b^{ij}) ] [\eta^k], [ t_\ast(b^{ij}) ] [\eta^k] }_{\sum\limits_{k} Y_{t_\ast}} \\
& \leq 
\nor{ [ t_\ast(b^{ij}) ] }^2_{M_n(\B(H))} \cdot \sca{ [\eta^k], [\eta^k]}_{\sum\limits_{k} Y_{t_\ast}} \\
& =
\nor{ [ t_\ast(b^{ij}) ] }^2_{M_n(\B(H))} \cdot \sca{ [\eta^k], [\eta^k]}_{\sum\limits_{k=1}^n Y_t}.
\end{align*}
In particular, we get the following corollary.

\begin{corollary}\label{C:Bessel}
Let $P$ be a unital subsemigroup of a discrete group $G$ and let $X$ be a product system over $P$.
Let $t \colon X \to \B(H)$ be an equivariant injective representation of $X$, and let $Y_t$ be the $A$-module as constructed above.
For $i,j,k \in \{1, \dots, n\}$ let $\eta^k \in Y_{t}$ and $b^{ij} \in \T(X)$.
If $t_\ast(b^{ik}) \eta^k \in Y_{t_\ast}$ for all $i,k$, then 
\[
\| [ t_\ast(b^{ij}) ] \cdot [\eta^k] \|_{\sum\limits_{k=1}^n Y_{t}} 
\leq 
\nor{ [ t_\ast(b^{ij}) ] }_{M_n(\B(H))} \cdot \| [\eta^k] \|_{\sum\limits_{k=1}^n Y_{t}}.
\]
\end{corollary}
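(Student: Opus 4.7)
The plan is to observe that essentially all the content of the corollary has already been worked out in the displayed computation appearing in the paragraph immediately preceding the statement; the corollary is simply the assertion of that matricial Bessel inequality rephrased in terms of norms on $\sum_{k=1}^n Y_t$ rather than $A$-valued inner products on $\sum_{k=1}^n Y_{t_\ast}$. The proof is therefore short and splits into a compatibility step and a square-root step.

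First I would record the key compatibility: because $t$ is injective on $X$, the map $\F X \hookrightarrow \ell^2(\B_{t_\ast})$ induced by $t$ is isometric, and under the unitary $U \colon Y_{t_\ast} \simeq \ell^2(\B_{t_\ast})$ this identifies $Y_t$ with a Hilbert $A$-submodule of $Y_{t_\ast}$. In particular, for any $x \in Y_t$ one has $\sca{x,x}_{Y_t} = \sca{x,x}_{Y_{t_\ast}}$, and the same holds coordinate-wise on $\sum_{k=1}^n Y_t \subseteq \sum_{k=1}^n Y_{t_\ast}$. This is exactly the identification that was already used in the displayed chain of (in)equalities above the statement.

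Next I would apply the matricial Bessel inequality to the vector $[\eta^k] \in \sum_{k=1}^n Y_t \subseteq \sum_{k=1}^n Y_{t_\ast}$ with coefficients $[t_\ast(b^{ij})] \in M_n(\ca(t)) \subseteq M_n(\B(H))$, which is legitimate since the hypothesis guarantees that $[t_\ast(b^{ij})][\eta^k] \in \sum_{k=1}^n Y_{t_\ast}$ so that both sides are well-defined. Taking the C*-norm in $A$ of the positive $A$-valued inequality
\[
\sca{[t_\ast(b^{ij})][\eta^k],[t_\ast(b^{ij})][\eta^k]}_{\sum_{k=1}^n Y_{t_\ast}} \leq \nor{[t_\ast(b^{ij})]}^2_{M_n(\B(H))} \sca{[\eta^k],[\eta^k]}_{\sum_{k=1}^n Y_{t_\ast}}
\]
and then extracting a square root converts inner products into Hilbert module norms, and the compatibility of the first step lets us identify these norms with the corresponding ones in $\sum_{k=1}^n Y_t$. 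This yields precisely the inequality in the statement.

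There is no real obstacle; the only point that requires care is bookkeeping the matrix indices and keeping track of which module ($Y_t$ or $Y_{t_\ast}$) hosts which element, so that the appeal to the Bessel inequality already derived in Subsection \ref{Ss:Bessel} is justified.
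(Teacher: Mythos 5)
Your proposal is correct and follows essentially the same route as the paper: the corollary is indeed obtained by viewing $\sum_{k=1}^n Y_t$ as a submodule of $\sum_{k=1}^n Y_{t_\ast}$ (so the inner products and hence the norms agree), applying the matricial Bessel inequality from Exel's construction, and passing from the $A$-valued inequality to norms. No gaps.
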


In \cite[Proposition 3.1]{Seh21}, Sehnem produces a hands-on Bessel-type inequality for elements in $\T_\la(X)^+$ and representations of $\T_\la(X)$.
In short, let $\Phi$ be an equivariant $*$-representation of $\T_\la(X)$ that admits a conditional expectation $E$, and let $E_\la$ be the faithful conditional expectation on $\T_\la(X)$.
For $\xi := \sum_{p \in P} \xi_p$ in $\F X$ set $\la(\xi) = \sum_{p \in P} \la(\xi_p)$.
Then taking $\eta := \sum_{q \in P} \eta_q$ in $\F X$ yields 
\begin{align*}
E \Phi( \la( \la(\xi) \eta )^* \la( \la(\xi) \eta ) )
& =
E \Phi( \la(\eta)^* \la(\xi)^* \la(\xi) \la(\eta) )
\leq
\nor{\Phi \la(\xi)}^2 E \Phi( \la(\eta)^* \la(\eta) ).
\end{align*}
Using that $E \Phi = \Phi E_\la$ gives
\[
E \Phi( \la( \la(\xi) \eta )^* \la( \la(\xi) \eta ) )
=
\Phi E_\la ( \la( \la(\xi) \eta )^* \la( \la(\xi) \eta ) )
=
\Phi \la( \sum_{p, q} \sca{\xi_p \eta_q, \xi_p \eta_q} ),
\]
and that
\[
E \Phi( \la(\eta)^* \la(\eta) )
=
\Phi E_\la( \la(\eta)^* \la(\eta) )
=
\Phi \la( \sum_{q \in P} \sca{\eta_q, \eta_q} ).
\]
If $\Phi$ is injective on $A$, then passing to norms yields
\[
\nor{\la(\xi) \eta}_{\F X} \leq \nor{\Phi \la(\xi)} \cdot \nor{\eta}_{\F X}.
\]
Similar arguments apply to the matrix levels.
As a consequence equivariant representations of $\T_\la(X)$ that are injective on $A$ are completely isometric on $\T_\la(X)^+$ \cite[Corollary 3.5]{Seh21}.
Corollary \ref{C:Bessel} allows an extension of this result, so that $\T_\la(X)^+$ receives completely contractive representations from the algebra of any equivariant injective representation of $X$ (we will show this in Proposition \ref{P:cc hom cstar}).

\section{Covariant representations of product systems}

\subsection{Fock-covariant representations}

The Fock representation $\la$ defines an injective representation of $X$ and hence it induces a representation $\la_*$ of $\T(X)$.
It has played a crucial role in the identification of the co-universal object in \cite{DKKLL20, KKLL21}, and it is at the epicenter of the covariant relations.

\begin{definition}
Let $P$ be a unital discrete left-cancellative semigroup and let $X$ be a product system over $P$.
The \emph{Fock algebra} $\T_\la(X)$ is the C*-algebra generated by the Fock representation $\la_*$.
The \emph{Fock tensor algebra $\T_\la(X)^+$ of $X$} is the (possibly nonselfadjoint) subalgebra of $\T_\la(X)$ generated by the image of $X$ under $\la$.
\end{definition}

As observed in \cite{DKKLL20}, when $P$ is a unital subsemigroup of a group $G$, then the Fock algebra admits a normal coaction by $G$.
Indeed, a direct computation gives that 
\[
U \cdot (\la(\xi_p) \otimes I) = (\la(\xi_p) \otimes \la_p) \cdot U \foral p \in P,
\]
for the unitary $U \colon \F X \otimes \ell^2(G) \to \F X \otimes \ell^2(G)$ given by
\[
U (\xi_r \otimes \de_g) = \xi_r \otimes \de_{r g}
\foral
r \in P, g \in G.
\]
Recall here that by \cite[p.37]{Lan95} we have 
\[
\T_\la(X) \otimes \ca_\la(G) \subseteq \L(\F X) \otimes \B(\ell^2(G)) \subseteq \L( \F X \otimes \ell^2(G) ),
\]
where we see $\ell^2(G)$ as a right Hilbert module over $\bC$, and $\F X \otimes \ell^2(G)$ denotes the exterior product of Hilbert modules.
Hence the faithful $*$-homomorphism
\[
\T_\la(X) \stackrel{\simeq}{\longrightarrow}
\ca(\la(\xi_p) \otimes I \mid p \in P) \stackrel{\ad_{U}}{\longrightarrow}
\ca(\la(\xi_p) \otimes \la_p \mid p \in P)
\]
defines a reduced coaction on $\T_\la(X)$, and thus it lifts to a normal coaction $\de$ on $\T_\la(X)$.
We can see that $\la_* \colon \T(X) \to \T_\la(X)$ is equivariant with respect to the coactions of $G$.

The coaction of $G$ on $\T_\la(X)$ induces a grading and thus a Fell bundle.
We will use this as the defining property for \emph{Fock covariance}.

\begin{definition}
Let $P$ be a unital subsemigroup of a discrete group $G$ and let $X$ be a product system over $P$.
For $\J_{\cov, e}^{\fock} := \ker\la_\ast \cap [\T(X)]_e$ let the induced ideal
\[
\J_{\cov}^{\fock} := \sca{\ker\la_\ast \cap [\T(X)]_e} \lhd \T(X).
\]
We write $\T_{\cov}^{\fock}(X)$ for the equivariant quotient of $\T(X)$ by $\J_{\cov}^{\fock}$.
We define the \emph{Fock-covariant} bundle of $X$ be the Fell bundle
\[
\F\C_G X := \Big\{ [\T_{\cov}^{\fock}(X)]_g \Big\}_{g \in G},
\]
given by the coaction of $G$ on $\T_{\cov}^{\fock}(X)$.
A representation of $\F\C_G X$ will be called a \emph{Fock-covariant representation of $X$}.
\end{definition}

\begin{remark}
Since the bimodule properties are graded we have that every representation of $\F\C_G X$ is a representation of $X$. 
Moreover $\T_\la(X)$ is a representation of $\F\C_G X$ by definition, and it admits a normal coaction, thus $\T_\la(X) \simeq \ca_\la(\F\C_G X)$.
Hence we have the canonical $*$-epimorphisms
\[
\T(X) \longrightarrow \T_{\cov}^{\fock}(X) \longrightarrow \T_\la(X).
\]
We will use the notation $\iota \colon \F\C_G X \hookrightarrow \T_{\cov}^{\fock}(X) = \ca_{\max}(\F\C_G X)$ for the natural embedding of the Fell bundle in its universal C*-algebra.
Therefore, we have
\[
\iota(b_{\Bx}), \iota \la(\xi_r) \in \T_{\cov}^{\fock}(X) \foral b_{\Bx}, \la(\xi_r) \in \T_\la(X).
\]
If $t$ is a Fock-covariant representation of $X$ we thus have
\[
t_\ast \iota \la(\xi_r) = t(\xi_r) \foral \xi_r \in X_r,
\]
for the induced $t_\ast \colon \T_{\cov}^{\fock}(X) \to \ca(t)$.
\end{remark}

As we are about to see, Fock-covariant representations satisfy that $\bo{K}_{\mt, t_\ast} = (0)$.
Hence by Proposition \ref{P:ind lim} their fixed point algebras admit an inductive limit realization over finite $\cap$-closed subsets of $\J$ (discarding the empty set).

\begin{proposition}\label{P:Fock cov rel}
Let $P$ be a unital subsemigroup of a discrete group $G$ and let $X$ be a product system over $P$.
If $t$ is a Fock-covariant representation of $X$, then $\bo{K}_{\mt, t_\ast} = (0)$.
\end{proposition}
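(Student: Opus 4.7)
The plan is to exploit that a Fock-covariant representation $t$ factors through $\T_{\cov}^{\fock}(X) = \T(X)/\J_{\cov}^{\fock}$, so it suffices to exhibit each spanning element of $\bo{K}_{\mt, t_\ast}$ as the image under $t_\ast$ of an element in the defining ideal $\J_{\cov}^{\fock}$. By linearity and continuity I only need to handle a typical generator of the form
\[
y = t(\xi_{p_1})^* t(\xi_{q_1}) \cdots t(\xi_{p_n})^* t(\xi_{q_n}) \qfor p_1^{-1} q_1 \cdots p_n^{-1} q_n = e_G \AND q_n^{-1} p_n \cdots q_1^{-1} p_1 P = \mt.
\]
Lifting the $\xi_{p_i}, \xi_{q_i}$ to the universal generators of $\T(X)$ produces a corresponding element $z \in [\T(X)]_e$ with $t_\ast(z) = y$. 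The crux is to show that $\la_\ast(z) = 0$ in $\T_\la(X)$. Granting this, $z \in \ker \la_\ast \cap [\T(X)]_e = \J_{\cov, e}^{\fock} \subseteq \J_{\cov}^{\fock}$, so $z$ vanishes in $\T_{\cov}^{\fock}(X)$; since $t_\ast$ factors through this quotient by Fock-covariance, we obtain $y = 0$.

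To verify the vanishing of $\la(\xi_{p_1})^* \la(\xi_{q_1}) \cdots \la(\xi_{p_n})^* \la(\xi_{q_n})$ as an operator on $\F X$, I evaluate on an arbitrary $\eta_r \in X_r$ and iterate from right to left using the explicit formulas for $\la(\xi_p)$ and $\la(\xi_p)^*$. The creation part $\la(\xi_{q_j})$ shifts the fiber index by multiplication on the left by $q_j$; the annihilation part $\la(\xi_{p_j})^*$ returns a nonzero value only when the current fiber index lies in $p_j P$, in which case it shifts the index by left-multiplication by $p_j^{-1}$. Consequently, the whole product survives on $\eta_r$ only if $p_j^{-1} q_j \cdots p_n^{-1} q_n r \in P$ for every $j = 1, \dots, n$. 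As recorded in the preliminaries on $\ca_\la(P)$, under the hypothesis $p_1^{-1} q_1 \cdots p_n^{-1} q_n = e_G$ this condition is equivalent to $r \in q_n^{-1} p_n \cdots q_1^{-1} p_1 P = \mt$, which holds for no $r$. Hence the operator annihilates every summand of $\F X$ and is zero, completing the argument.

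The only step with any substance is the fiber-tracking inside the Fock operator, and this amounts to transcribing the preliminary analysis of $V_{p_1}^* V_{q_1} \cdots V_{p_n}^* V_{q_n}$ on $\ell^2(P)$ into the operator-coefficient setting through the identification $\la(\xi_p) = w( M_{\xi_p} \otimes V_p ) w^*$; no further ingredient is needed.
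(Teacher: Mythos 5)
Your proposal is correct and follows essentially the same route as the paper: reduce to the Fock representation via the defining ideal $\J_{\cov}^{\fock}$, then track the fiber indices of $\la(\xi_{p_1})^*\la(\xi_{q_1})\cdots\la(\xi_{p_n})^*\la(\xi_{q_n})$ acting on $\F X$ to see that survival on some $X_r$ forces $r \in q_n^{-1}p_n\cdots q_1^{-1}p_1 P$. The paper states the computation in the contrapositive (a nonzero product yields an $r$ in the constructible ideal, so it is nonempty), but this is the same argument.
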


\begin{proof}
It suffices to show this for the Fock representation.
We will show that if $\bo{K}_{\Bx, \la_\ast} \neq (0)$ then $\Bx \neq \mt$.
Towards this end, let $p_i, q_i \in P$ such that $p_1^{-1} q_1 \cdots p_n^{-1} q_n = e_G$, and $\xi_{p_i}$, $\xi_{q_i}$ such that
\[
\la(\xi_{p_1})^* \la(\xi_{q_1}) \cdots \la(\xi_{p_n})^* \la(\xi_{q_n}) \neq 0.
\]
Then there is an $r \in P$ and a $\xi_r \in X_r$ such that
\[
X_r \ni \xi_{p_1}^* \xi_{q_1} \cdots \xi_{p_n}^* \xi_{q_n} \xi_r = \la(\xi_{p_1})^* \la(\xi_{q_1}) \cdots \la(\xi_{p_n})^* \la(\xi_{q_n}) \xi_r \neq 0.
\]
In particular, we get $p_{j+1}^{-1} q_{j+1} \cdots p_n^{-1} q_n r = q_j^{-1} p_{j} \cdots q_1^{-1} p_1 r \in P$ for every $j = 1, \dots, n$.
Therefore, the ideal $q_n^{-1} p_n \dots q_1^{-1} p_1 P$ contains $r$, and hence it is non-empty.
\end{proof}

We will be checking at every level that the constructions coming from $\T_\la(X)$ are independent of the group embedding.
This boils down to showing that their fixed point algebras are independent of the group embedding.
The first part of the proof of \cite[Lemma 3.9]{Seh18} can be transferred in our setting.

\begin{proposition}\label{P:la fpa same}
Let $P$ be a unital subsemigroup and let $X$ be a product system over $P$.
Suppose that $P$ admits two group embeddings $i_G \colon P \to G$ and $i_H \colon P \to H$.
Then 
\begin{align*}
& \ol{\spn} \big\{\la(X_{p_1})^* \la(X_{q_1}) \cdots \la(X_{p_n})^* \la(X_{q_n}) \mid i_G(p_1)^{-1} i_G(q_1) \cdots i_G(p_n)^{-1} i_G(q_n) = e_G; \\
& \hspace{11.5cm}
q_n^{-1} p_n \dots q_1^{-1} p_1 P = \Bx \big\}
= \\
& \hspace{.5cm} =
\ol{\spn} \big\{\la(X_{p_1})^* \la(X_{q_1}) \cdots \la(X_{p_n})^* \la(X_{q_n}) \mid i_H(p_1)^{-1} i_H(q_1) \cdots i_H(p_n)^{-1} i_H(q_n) = e_H; \\
& \hspace{11.5cm}
q_n^{-1} p_n \dots q_1^{-1} p_1 P = \Bx \big\},
\end{align*}
and thus the fixed point algebras of $\T_\la(X)$ coincide, i.e., 
\[
[\T_\la(X)]_{e_G} = [\T_\la(X)]_{e_H}.
\]
\end{proposition}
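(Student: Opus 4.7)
The plan is to show that both spans coincide with the same intrinsic-in-$P$ object: the closed linear span of those alternating products $T = \la(\xi_{p_1})^* \la(\xi_{q_1}) \cdots \la(\xi_{p_n})^* \la(\xi_{q_n})$ that preserve each fiber $X_r \subseteq \F X$ on which they are nonzero, and whose defining word produces $\Bx$ as a constructible ideal of $P$. The key observation is that the Fock space $\F X = \sum_{r \in P}^{\oplus} X_r$ is indexed by $P$ itself rather than by any external group, so the shift pattern of $T$ across fibers is detected intrinsically in $P$ via left-cancellation, independently of the embedding.

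First I would analyse the action of $T$ on $X_r$ by iterating the rules $\la(\xi_p) X_s \subseteq X_{ps}$ and $\la(\xi_p)^* X_s \subseteq X_u$ (where $u$ is the unique element of $P$ satisfying $s = pu$ when such $u$ exists, while otherwise $\la(\xi_p)^* X_s = 0$). Reading $T$ from right to left produces a chain of divisibility relations $q_n r = p_n u_n$, $q_{n-1} u_n = p_{n-1} u_{n-1}, \ldots, q_1 u_2 = p_1 u_1$ in $P$; whenever all of them are satisfiable one obtains $T(X_r) \subseteq X_{s_r}$ with $s_r := u_1 \in P$, and otherwise $T|_{X_r} = 0$. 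Inside any group containing $P$, the resulting $s_r$ satisfies the symbolic identity $s_r = p_1^{-1} q_1 \cdots p_n^{-1} q_n r$.

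Next, suppose $T$ is a nonzero generator on the $G$-side, so $p_1^{-1} q_1 \cdots p_n^{-1} q_n = e_G$ in $G$ and $q_n^{-1} p_n \cdots q_1^{-1} p_1 P = \Bx$. Pick $r \in P$ with $T|_{X_r} \neq 0$; the first step gives $s_r = r$ in $G$, and since $P \hookrightarrow G$ is injective and $r, s_r \in P$, we deduce $s_r = r$ in $P$, hence also in $H$. Reading the divisibility formula now inside $H$ yields $p_1^{-1} q_1 \cdots p_n^{-1} q_n r = r$, and group cancellation in $H$ delivers $p_1^{-1} q_1 \cdots p_n^{-1} q_n = e_H$. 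As the constructible ideal $\Bx \in \J$ is defined purely via pre-images and left-multiplications in $P$, the condition $q_n^{-1} p_n \cdots q_1^{-1} p_1 P = \Bx$ is unaffected by the embedding, so $T$ belongs to the $H$-side generating set; the symmetric exchange of $G$ and $H$ closes the equality of the $\bo{K}_{\Bx, \la_\ast}$-cores. Finally, applying Proposition \ref{P:ind lim}(c) to the equivariant injective representation $\la$ identifies $[\T_\la(X)]_{e_G}$ and $[\T_\la(X)]_{e_H}$ as the same inductive limit of $\bo{B}$-cores.

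The main obstacle is the first step: tracking the alternating composition of Fock creation and annihilation operators, verifying that the intermediate adjoints impose precisely the divisibility relations in $P$ needed for $T(X_r)$ to land in a single fiber $X_{s_r}$, and matching this $s_r$ with the symbolic group expression $p_1^{-1} q_1 \cdots p_n^{-1} q_n r$. Once that intrinsic-in-$P$ bookkeeping is in place, the cancellation arguments in $G$ and $H$ are immediate and perfectly symmetric.
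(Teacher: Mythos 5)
Your argument is correct and rests on the same key observation as the paper's proof: a nonzero alternating product forces the word $p_1^{-1}q_1\cdots p_n^{-1}q_n$ to fix some $r \in P$ (equivalently, a certain constructible set is nonempty), and since this is witnessed by a divisibility chain inside $P$ itself, triviality of the word transfers between any two group embeddings. The only difference is presentational: you compare $G$ and $H$ directly and symmetrically by tracking fibers of the Fock space, whereas the paper routes through the enveloping group of $P$ and argues by contradiction via emptiness of the constructible ideal $q_1 p_2^{-1}\cdots p_n^{-1}q_n P$.
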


\begin{proof}
For convenience, let $G$ be the enveloping group of $P$ and let $\ga \colon G \to H$ be the canonical epimorphism that fixes $P$.
By definition we have a commutative diagram
\[
\xymatrix{
\T_\la(X) \ar[rr] \ar[drr] & & \T_\la(X) \otimes \ca_{\max}(G) \ar[d] \\
& & \T_\la(X) \otimes \ca_{\max}(H)
}
\]
and therefore $[\T_\la(X)]_{e_G} \subseteq [\T_\la(X)]_{e_H}$.
In particular, we have the inclusion ``$\subseteq$'' for the $\bo{K}$-cores of the statement.
For the reverse inclusion let a non-zero element
\[
0 \neq b_{\Bx} = \la(\xi_{p_1})^* \la(\xi_{q_1}) \cdots \la(\xi_{p_n})^* \la(\xi_{q_n}) \in [\T_\la(X)]_{e_H}
\]
for $\ga(p_1)^{-1} \ga(q_1) \cdots \ga(p_n)^{-1} \ga(q_n) = e_H$.
We will show that $b_{\Bx} \in [\T_\la(X)]_{e_G}$; equivalently that $p_1 q_1^{-1} \cdots p_n q_n^{-1} = e_G$.
To reach a contradiction suppose that $p_1 q_1^{-1} \cdots p_n q_n^{-1} \neq e_G$ and set
\[
X := q_1 p_2^{-1} \dots p_n^{-1} q_n P
\qand
Y := \ga^{-1}(\ga(q_1) \ga(p_2)^{-1} \dots \ga(p_n)^{-1} \ga(p_n) \ga(P)).
\]
We show that $X = Y = \mt$.
Then Proposition \ref{P:ind lim} yields
\[
\la(\xi_{q_1}) \la(\xi_{p_2})^* \cdots \la(\xi_{p_n})^* \la(\xi_{q_n}) = 0,
\]
and thus the contradiction $b_{\Bx} = 0$.
First note that since $\ga$ is one-to-one on $P$ and a group homomorphism we have that $X=Y$.
Secondly in order to show that $X = Y = \mt$, suppose that $X \neq \mt$ so that there are $r,s \in P$ such that $s = q_1 p_2^{-1} \cdots p_n^{-1} q_n r$ and thus $\ga(p_1 s) = \ga(r)$.
But as $\ga$ is one-to-one on $P$ we have that $p_1 s = r$ and thus $p_1^{-1} q_1 \cdots p_n^{-1} q_n r = p_1 s = r$ giving the contradiction $p_1^{-1} q_1 \cdots p_n^{-1} q_n = e_G$.
\end{proof}

As an immediate corollary, we derive that $\T_{\cov}^{\fock}(X)$ is independent of the group embedding, in the following sense.

\begin{corollary}
Let $P$ be a unital subsemigroup and let $X$ be a product system over $P$.
Suppose that $P$ admits two group embeddings $i_G \colon P \to G$ and $i_H \colon P \to H$.
Then there exists a $*$-isomorphism
\[
\ca_{\max}(\F\C_G X) \longrightarrow \ca_{\max}(\F\C_H X); \xi_{i_G(p)} \mapsto \xi_{i_H(p)}.
\]
In particular, there exists a Fell bundle isomorphism $\F\C_G X \to \F\C_H X$ such that
\[
[\T_{\cov}^{\fock}(X)]_{i_G(p_1)^{-1} i_G(q_1) \cdots i_{G}(p_n)^{-1} i_G(q_n)} \longrightarrow [\T_{\cov}^{\fock}(X)]_{i_H(p_1)^{-1} i_H(q_1) \cdots i_{H}(p_n)^{-1} i_H(q_n)},
\]
isometrically for every $p_1, q_1, \dots, p_n, q_n \in P$.
\end{corollary}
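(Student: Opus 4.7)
The plan is to show that $\J_{\cov, G}^{\fock}$ and $\J_{\cov, H}^{\fock}$ coincide as ideals of $\T(X)$, from which the $*$-isomorphism follows by quotienting by a common ideal. I reduce to showing that both ideals equal $\J_{\cov, G_{\env}}^{\fock}$, where $G_{\env}$ denotes the enveloping group of $P$; the general case then follows by applying this to both $G$ and $H$. Without loss of generality $P$ generates $G$, so there is a canonical epimorphism $\ga \colon G_{\env} \to G$ restricting to the identity on $P$. The inclusion $\J_{\cov, G_{\env}}^{\fock} \subseteq \J_{\cov, G}^{\fock}$ is immediate from $[\T(X)]_{e_{G_{\env}}} \subseteq [\T(X)]_{e_G}$, so only the reverse inclusion requires work.

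The structural input from Proposition \ref{P:la fpa same} is that, in the $G_{\env}$-grading on $\T_\la(X)$, the fiber $[\T_\la(X)]_k$ vanishes for every $k \in \ker \ga \setminus \{e_{G_{\env}}\}$: any monomial with $G_{\env}$-word $k$ has $G$-word $\ga(k) = e_G$, so it sits in $[\T_\la(X)]_{e_G} = [\T_\la(X)]_{e_{G_{\env}}}$ by Proposition \ref{P:la fpa same}; since the faithful conditional expectation of the reduced Fell bundle C*-algebra $\T_\la(X)$ is the identity on $[\T_\la(X)]_{e_{G_{\env}}}$ and vanishes on $[\T_\la(X)]_k$ for $k \neq e_{G_{\env}}$, the monomial must be zero. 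The injectivity of the max-to-reduced map on each Fell bundle fiber (a consequence of Exel's Bessel inequality, reviewed in Subsection \ref{Ss:Bessel}) then gives $[\ca_{\max}(\F\C_{G_{\env}} X)]_k = 0$, and hence $[\T(X)]_k \subseteq \J_{\cov, G_{\env}}^{\fock}$ for every such $k$.

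To close the argument, take $x \in \ker \la_\ast \cap [\T(X)]_{e_G}$. The $G$- and $G_{\env}$-coactions on $\T(X)$ combine into a joint $(G_{\env} \times G)$-coaction defined on generators by $\xi_p \mapsto \xi_p \otimes u_{(i_{G_{\env}}(p), i_G(p))}$, so the associated Fourier projections commute and $E_k^{G_{\env}}(x) = 0$ whenever $k \notin \ker \ga$. Setting $x_e := E_e^{G_{\env}}(x)$, the difference $x - x_e$ lies in $\ol{\spn} \bigcup_{k \in \ker \ga \setminus \{e\}} [\T(X)]_k$, which is contained in the closed ideal $\J_{\cov, G_{\env}}^{\fock}$ by the previous paragraph. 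Hence $\la_\ast(x_e) = \la_\ast(x) - \la_\ast(x - x_e) = 0$, so $x_e \in \ker \la_\ast \cap [\T(X)]_{e_{G_{\env}}} \subseteq \J_{\cov, G_{\env}}^{\fock}$, and therefore $x \in \J_{\cov, G_{\env}}^{\fock}$. The induced Fell bundle isomorphism then identifies the $g$-fiber with the $h$-fiber for $g, h$ corresponding to the same word in $P$, isometrically because $*$-isomorphisms are. The main conceptual step is the fiber-vanishing of the second paragraph, which combines Proposition \ref{P:la fpa same} with the Fell bundle max-to-reduced injectivity.
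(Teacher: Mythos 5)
Your argument is correct and is essentially the route the paper intends: the corollary is stated there as an immediate consequence of Proposition \ref{P:la fpa same}, and you have simply supplied the details, namely the reduction to the enveloping group, the vanishing of the fibers $[\T_\la(X)]_k$ for $k \in \ker\ga \setminus \{e\}$, and the resulting equality of the ideals $\J_{\cov}^{\fock}$ for the two embeddings. One step deserves an explicit word: the inference from ``$E_k^{G_{\env}}(x)=0$ for $k \notin \ker\ga$'' to ``$x - x_e \in \ol{\spn}\bigcup_{k \in \ker\ga\setminus\{e\}}[\T(X)]_k$'' is not valid for general topological gradings (there is no Fourier inversion), but it does hold here because Proposition \ref{P:t coaction} shows $[\T(X)]_{e_G}$ is the closed span of monomials each of which lies in a single $G_{\env}$-fibre indexed by $\ker\ga$; approximating $x$ by finite sums of such monomials and applying the contractive map $E_{e_{G_{\env}}}$ closes the gap.
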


It transpires that the $*$-isomorphism passes down to the C*-algebras, namely
\[
\ca_{\la}(\F\C_G X) \simeq \ca_{\la}(\F\C_H X) \simeq \T_\la(X).
\]
In the generality that we work here, we see that $\T_\la(X)$ may not admit a Wick ordering, i.e., it may not be densely spanned by $\la(X_p) \la(X_q)^*$.
This happens for compactly aligned product systems over right LCM-semigroups \cite{DKKLL20, KL19b}.
Product systems over right LCM semigroups generalize product systems over quasi-lattices in the sense of Fowler \cite{Fow02}.

\begin{example}
Let $P$ be a right LCM semigroup in the sense of \cite{Law12}, i.e., $\J = \{ p P \mid p \in P\} \cup \{\mt\}$.
A Fowler's product system $X$ over $P$ is \emph{non-degenerate} and \emph{compactly aligned} if
\begin{enumerate}
\item there are associative multiplication rules so that $X_p \otimes_A X_q \stackrel{u_{p,q}}{\simeq} X_{pq}$ for all $p, q \in P$.
\item if $pP \cap qP = wP$, then $i_{p}^{w}(k_p) i_q^w(k_q) \in \K X_w$ for all $k_p \in \K X_p$ and $k_q \in \K X_q$.
\end{enumerate}
Here we use the notation
\[
i_p^{pq}(S) = u_{p, q}( S \otimes \id_{X_q} ) u_{p, q}^* \foral S \in \L X_p.
\]
Property (i) implies that the left action of $A$ on every $X_p$ is non-degenerate.
Property (ii) implies that the Fock representation satisfies the \emph{Nica-covariance} relation in the sense of \cite{KL19b}, i.e., for $k_p \in \K X_p$ and $k_q \in \K X_q$ we have
\[
\la(k_p) \la(k_q)
=
\begin{cases}
\la(i_{p}^w(k_p) i_q^w(k_q)) & \text{ if } pP \cap qP = wP, \\
0 & \text{ otherwise},
\end{cases}
\]
where we use $\la$ for the induced representation on each $\K X_r$.
Note that by \cite[Proposition 2.4]{DKKLL20} we have that $\la(i_p^{x p}(k_p)) = \la(k_p)$ for all $k_p \in \K X_p$ and $x \in P \cap P^{-1}$, and so the above relations do not depend on the choice of the LCM.
Nica-covariance implements a Wick ordering on $\T_\la(X)$, i.e., 
\[
\T_\la(X) = \ol{\spn}\{ \la(X_p) \la(X_q)^* \mid p,q \in P\}.
\]
Hence $\la(\xi_p) \la(\xi_q)^* \in [\T_\la(X)]_e$ if and only if $p=q$.
We see that
\[
\la(X_q) \la(X_q)^* \in \bo{K}_{pP, \la_*} \textup{ iff } pP = q q^{-1}P = qP.
\]
Then $q = p x$ for $x \in P \cap P^{-1}$, and therefore by \cite[Proposition 2.4]{DKKLL20} we get that
\[
\la(X_q) \la(X_q)^* \subseteq [\la(X_p) \la(X_x) \la(X_x)^* \la(X_p)^*] = [\la(X_p) \la(X_p)^*].
\]
Consequently, we derive that
\[
\bo{K}_{pP, \la_*} = [\la(X_p) \la(X_p)^*].
\]
It is worth noting that the compact alignment of $X$ is in fact equivalent with $\la$ satisfying the Nica-covariance relation.
This has been observed by Katsoulis in \cite[Proposition 3.2]{Kat20} but the proof applies in this more general context as well, see \cite[Proposition 4.7]{KKLL21b}.
The key point is that Nica-covariance and Fock-covariance coincide in this setting by \cite[Proposition 4.3]{DKKLL20}.
\end{example}

\begin{example}
Let $X_p = \bC$ for every $p \in P$.
In this case we have that $\bo{K}_{\Bx, \la_\ast} = \bC \cdot E_{[\Bx]}$ for every $\Bx \in \J$.
Laca--Sehnem \cite{LS21} located the Fock-covariant representations of this trivial product system.
They show that the Fock-covariant C*-algebra $\T_{\cov}^{\fock}(X)$ in this case is the universal C*-algebra generated by an isometric representation $\{v_p \mid p \in P\}$ of $P$ and projections $\{e_{\Bx} \mid \Bx \in \J\}$ with $e_\mt = 0$ such that:
\[
\prod_{\By \in \F} (e_{\Bx} - e_{\By}) = 0 \qfor \Bx = \bigcup_{\By \in \F} \By, \textup{ finite } \F \subseteq \J.
\]

Several semigroup C*-algebras were considered by Li \cite{Li12}, and in particular the one obtained by the \emph{constructible} representations.
We denote by $\ca_s(P)$ the universal C*-algebra generated by an isometric representation $\{v_p \mid p \in P\}$ of $P$ and projections $\{e_{\Bx} \mid \Bx \in \J\}$ with $e_\mt = 0$ such that:
\[
v_{p_1}^* v_{q_1} \cdots v_{p_n}^* v_{q_n} = e_{\Bx} \qfor \Bx = q_n^{-1} p_n \dots q_1^{-1} p_1 P, p_1^{-1} q_1 \cdots p_n^{-1} q_n = e_G.
\]
It has been shown independently in \cite{KKLL21} and \cite{LS21}, that the canonical $*$-epimorphism from $\ca_s(P)$ to $\T_{\cov}^{\fock}(X)$ is faithful if and only if $\J$ is independent.

It has been noted in \cite{KKLL21} that a non-zero equivariant representation $\{v_p\}_{p \in P}$ of $\ca_s(P)$ descends to a non-zero Fock-covariant representation $\{\dot{v}_p\}_{p \in P}$ of $P$, and thus to an injective representation of $P$.
The crux of the argument in \cite{KKLL21} is that every equivariant isometric representation of $\ca_s(P)$ defines a partial action of $G$ on the spectrum of its fixed point algebra.
Since the fixed point algebra of $\ca_s(P)$ has a smallest one such space, denoted by $\partial \Om_P$, we deduce that such a representation is automatically non-zero.
This is equivalent to having that $1 \notin v(\ker q)$ for the canonical $*$-epimorphism $q \colon \ca_s(P) \to \T_{\cov}^{\fock}(X)$, i.e., $v(X_e) \cap v(\ker q) = (0)$.
\end{example}

We abstract this property of $\ca_s(P)$ in the following definition.
We will later see that it is a necessary and sufficient condition for the form of our results.

\begin{definition}\label{D:separating}
Let $P$ be a unital subsemigroup of a discrete group $G$ and let $X$ be a product system over $P$.
We say that an equivariant representation $t$ of $X$ is \emph{covariant} if it satisfies
\[
t(A) \cap [t_\ast(\ker q_{\cov}^{\fock})]_e = (0),
\]
for the canonical $*$-epimorphism $q_{\cov}^{\fock} \colon \T(X) \longrightarrow \T_{\cov}^{\fock}(X)$.
\end{definition}

Fock-covariant representations are automatically covariant.
Moreover, if $t$ is an equivariant representation of $X$, then Remark \ref{R:induced} implies that there exists an equivariant Fock-covariant representation $\dot{t}$ and a canonical $*$-epimorphism $q_t \colon \ca(t) \to \ca(\dot{t})$ that makes the following diagram
\[
\xymatrix{
\T(X) \ar[rr]^{t_\ast} \ar[d]^{q_{\cov}^{\fock}} & & \ca(t) \ar[d]^{q_t} \\
\T_{\cov}^{\fock}(X) \ar[rr]^{\dot{t}_\ast} & & \ca(\dot{t})
}
\]
commutative, since $\ker q_{\cov}^{\fock}$ is induced, i.e., 
\[
\ker q_t = t_\ast(\ker q_{\cov}^{\fock}) = \sca{t_\ast(\ker q_{\cov}^{\fock}) \bigcap [\ca(t)]_e} = \sca{\ker q_t \bigcap [\ca(t)]_e}.
\]
We now show that covariance allows for injectivity to be preserved both-ways when inducing to Fock-covariant representations.
Therefore, if an injective Fock-covariant representation is terminal for a class of injective representations of $X$ then those need to be covariant.

\begin{proposition}\label{P:separating}
Let $P$ be a unital subsemigroup and let $X$ be a product system over $P$.
Let 
\[
\xymatrix{
\T(X) \ar[rr]^{t_\ast} \ar[d]^{q_{\cov}^{\fock}} & & \ca(t) \ar[d]^{q_t} \\
\T_{\cov}^{\fock}(X) \ar[rr]^{\dot{t}_\ast} & & \ca(\dot{t})
}
\]
be the commutative diagram associated with an equivariant representation $t$ of $X$.
Then $t$ is injective and covariant if and only if $\dot{t}$ is injective.
\end{proposition}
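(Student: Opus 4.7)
The plan is a direct diagram chase using the identification of the kernel of $q_t$ with an induced ideal, together with the defining property of covariance. There is essentially no geometric or analytic obstacle; all the work went into setting up the commutative diagram in the paragraph preceding the statement.

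\textbf{Setup.} I would begin by recording the two key consequences of the remark preceding the proposition. First, commutativity of the diagram gives $\dot{t}_* \circ q_{\cov}^{\fock} = q_t \circ t_*$; in particular, for $a \in A \subseteq \T(X)$,
\[
\dot{t}(a) = \dot{t}_* q_{\cov}^{\fock}(a) = q_t t_*(a) = q_t(t(a)).
\]
Second, since $\ker q_{\cov}^{\fock}$ is induced and $t$ is equivariant, we have the identity
\[
\ker q_t = t_*(\ker q_{\cov}^{\fock}) = \sca{t_*(\ker q_{\cov}^{\fock}) \cap [\ca(t)]_e},
\]
so that in particular
\[
[t_*(\ker q_{\cov}^{\fock})]_e = \ker q_t \cap [\ca(t)]_e.
\]
Recall also that injectivity of a representation of $X$ means injectivity on $A$, so both directions reduce to a statement about $a \in A$.

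\textbf{Forward direction.} Assume $t$ is injective and covariant. Let $a \in A$ with $\dot{t}(a) = 0$. By the displayed identity, $q_t(t(a)) = 0$, so $t(a) \in \ker q_t \cap [\ca(t)]_e = [t_*(\ker q_{\cov}^{\fock})]_e$. On the other hand $t(a) \in t(A)$, so covariance forces $t(a) = 0$, and injectivity of $t$ then gives $a = 0$. Hence $\dot{t}$ is injective on $A$.

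\textbf{Reverse direction.} Assume $\dot{t}$ is injective on $A$. For injectivity of $t$: if $a \in A$ with $t(a) = 0$, then $\dot{t}(a) = q_t(t(a)) = 0$, hence $a = 0$. For covariance: take $a \in A$ with $t(a) \in [t_*(\ker q_{\cov}^{\fock})]_e = \ker q_t \cap [\ca(t)]_e$. Then $\dot{t}(a) = q_t(t(a)) = 0$, so by injectivity of $\dot{t}$ on $A$ we obtain $a = 0$, and therefore $t(a) = 0$. Thus $t(A) \cap [t_*(\ker q_{\cov}^{\fock})]_e = (0)$, which is exactly covariance of $t$.

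\textbf{Main obstacle.} There is no substantive obstacle; the one subtle point is to make sure the degree-$e$ identification $[t_*(\ker q_{\cov}^{\fock})]_e = \ker q_t \cap [\ca(t)]_e$ is justified, which is precisely the content of the preceding remark on induced ideals under equivariant quotients.
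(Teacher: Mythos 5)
Your argument is correct and follows essentially the same route as the paper's proof: both directions are the same diagram chase through $\dot{t}_*q_{\cov}^{\fock}=q_t t_*$ together with the identification $[t_*(\ker q_{\cov}^{\fock})]_e=\ker q_t\cap[\ca(t)]_e$ coming from the induced-ideal remark. The only point the paper makes explicit that you leave implicit is that $q_{\cov}^{\fock}|_A$ is injective (because $\la|_A$ is), which is what justifies reading ``$\dot{t}$ injective'' as injectivity of $a\mapsto\dot{t}_*q_{\cov}^{\fock}(a)$ on $A$.
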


\begin{proof}
Suppose that $t$ is injective and covariant.
If $\dot{t} q_{\cov}^{\fock}(a) = 0$ for $a \in A$, then $q_t t(a) = 0$.
Hence $t(a) \in [\ker q_t]_e = [t_\ast(\ker q_{\cov}^{\fock})]_e$, and covariance yields $t(a) = 0$.
Thus $a = 0$ as $t$ is injective.
For the converse recall that $\la|_A$, and thus $q_{\cov}^{\fock}|_A$, is injective.
Therefore, if $\dot{t}$ is injective, then so is $\dot{t} q_{\cov}^{\fock}|_A$ and hence $t$ is injective.
Moreover, if $t(a) = t_\ast(y)$ for $a \in A \subseteq \T(X)$ and $y \in \ker q_{\cov}^{\fock}$, then
\[
\dot{t} q_{\cov}^{\fock}(a) = q_t t(a) = q_t t_\ast(y) = \dot{t}_\ast q_{\cov}^{\fock}(y) = 0,
\]
and injectivity of $\dot{t} q_{\cov}^{\fock}|_A$ gives $a = 0$, as required.
\end{proof}

\subsection{Fixing the multiplication rules}

Sims--Yeend \cite{SY10} have recognized the following problem with $\la$.
For $b_{\Bx} \in \bo{K}_{\Bx, \la_\ast}$ and $\xi_r \in X_r$, we have $b_{\Bx} \xi_r \in X_r$, and we can define the element $\la(b_{\Bx} \xi_r) \in \T_\la(X)$.
However, $\la( b_{\Bx} \xi_r )$ may not coincide with $b_{\Bx} \la(\xi_r)$.
For example, for $r \notin \Bx$ we have that $\la(b_{\Bx} \xi_r) = 0$, but it may not be that $b_{\Bx} \la(\xi_r) \neq 0$, as it may happen that $rs \in \Bx$ for some other $s \in P$ (consider for example the case of $\ca_\la(P)$).

Sims--Yeend \cite{SY10} suggested a \emph{local} fix towards this end by introducing ideals $I(r, F)$ in $A$, for $r \in P$ and finite $F \subseteq G$, so that $\la(b_{\Bx} \xi_r a_r) = 0$ for all $a_r \in I(r,F)$ and $F \supseteq F(\Bx)$.
Their proposed method covers some but not all cases.
Sehnem \cite{Seh18} ingeniously resolves this for any Fowler product system, and her construction passes in our context as well.

Let $P$ be a unital subsemigroup of a discrete group $G$ and let $X$ be a product system over $P$.
For a finite set $F \subseteq G$ let
\[
K_F := \bigcap_{g \in F} gP.
\]
There is a connection between $\Bx \in \J$ and $F_{\Bx} \subseteq G$ such that $K_{F_{\Bx}} = \Bx$, given in the following way: if $\Bx = q_n^{-1} p_n \dots q_1^{-1} p_1 P$, then $\Bx = K_{F_{\Bx}}$ for
\[
F_{\Bx} := \{e_G, q_n^{-1}, q_n^{-1} p_n, q_n^{-1} p_n q_{n-1}, \dots, q_n^{-1} p_n \dots q_1^{-1} p_1\}.
\]
For $r \in P$ and $g \in F$ define the ideal of $A$ given by
\[
I_{r^{-1} K_{\{r,g\}}} :=
\begin{cases}
\bigcap\limits_{s \in K_{\{r,g\}}} \ker \vphi_{r^{-1} s} & \text{if } K_{\{r,g\}} \neq \mt \text{ and } r \notin K_{\{r,g\}},\\
A & \text{otherwise},
\end{cases}
\]
and set
\[
I_{r^{-1} (r \vee F)} := \bigcap_{g \in F} I_{r^{-1} K_{\{r,g\}}}.
\]
We have that $I_{r^{-1}(r \vee F)} = I_{(pr)^{-1}(pr \vee pF)}$ for all $r,p \in P$, and $I_{r^{-1}(r \vee F)} = I_{(s^{-1}r)^{-1}(s^{-1}r \vee s^{-1}F)}$ for all $r \in sP$.
Moreover
\[
I_{r^{-1}(r \vee F_1)} \subseteq I_{r^{-1}(r \vee F_2)} \; \text{ when } \; F_1 \supseteq F_2.
\]
We declare that $K_\mt= \mt$ and that $I_{r^{-1} (r \vee \mt)} = A$.

\begin{proposition}\label{P:bx acts}
Let $P$ be a unital subsemigroup of a discrete group $G$ and let $X$ be a product system over $P$.

\noindent
\textup{(a)} For $\Bx \in \J$ and $r \in P$ we have the following cases:
\begin{enumerate}
\item If $r \in \Bx$, then
\[
b_{\Bx} \la(\xi_r) = \la( b_{\Bx} \xi_r ) \foral b_{\Bx} \in \bo{K}_{\Bx, \la_\ast}, \xi_r \in X_r.
\]
\item If $r \notin \Bx$ and $F \supseteq F_{\Bx}$, then 
\[
b_{\Bx} \la(\xi_r a_r) = \la( b_{\Bx} \xi_r a_r) = 0 \foral b_{\Bx} \in \bo{K}_{\Bx, \la_\ast}, \xi_r \in X_r, a_r \in I_{r^{-1}(r \vee F)}.
\]
\end{enumerate}

\noindent
\textup{(b)} If $t$ is a Fock-covariant representation, then for $b_{\Bx} \in \bo{K}_{\Bx, \la_\ast}$ and $F \supseteq F_{\Bx}$ we have that
\[
t_{\ast}(\iota(b_{\Bx})) t(\xi_r a_r) = t(b_{\Bx} \xi_r a_r) \foral \xi_r a_r \in X_F,
\]
for the induced $*$-representation $t_{\ast} \colon \T_{\cov}^{\fock}(X) \to \ca(t)$.
\end{proposition}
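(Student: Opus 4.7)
The plan is to first prove part (a) by direct computation inside the Fock space and then deduce part (b) by transferring the identities from $\T_\la(X)$ into $\T_{\cov}^{\fock}(X)$ via the Fell bundle embedding $\iota$.

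For part (a), by linearity and continuity it suffices to take a spanning monomial
\[
b_\Bx = \la(\xi_{p_1})^* \la(\xi_{q_1}) \cdots \la(\xi_{p_n})^* \la(\xi_{q_n})
\]
with $p_1^{-1}q_1 \cdots p_n^{-1}q_n = e_G$ and $q_n^{-1}p_n \cdots q_1^{-1}p_1 P = \Bx$. Applying this product from the right to $\xi_r \in X_r \subseteq \F X$ proceeds iteratively; the partial product after $n-k+1$ steps formally sits in $X_{p_k^{-1}q_k \cdots p_n^{-1}q_n r}$ and is non-zero precisely when this exponent lies in $P$. Using $p_1^{-1}q_1 \cdots p_n^{-1}q_n = e_G$, the simultaneous validity of this condition for every $k$ is exactly the requirement $r \in \Bx$, so the telescoping delivers (i). For (ii), when $r \notin \Bx$ the same analysis yields $b_\Bx \xi_r a_r = 0$ in $X_r$, hence $\la(b_\Bx \xi_r a_r) = 0$. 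For the operator identity $b_\Bx \la(\xi_r a_r) = 0$, I evaluate at an arbitrary $\eta_s \in X_s \subseteq \F X$: the output $b_\Bx(\xi_r a_r \eta_s) \in X_{rs}$ vanishes unless $rs \in \Bx$, and in the latter case the hypothesis $r \notin \Bx$ supplies some $g \in F_\Bx$ with $r \notin gP$; then $rs \in K_{\{r,g\}}$ gives $s \in r^{-1}K_{\{r,g\}}$, so the containment $I_{r^{-1}(r \vee F)} \subseteq I_{r^{-1}K_{\{r,g\}}}$ (valid since $F \supseteq F_\Bx$) forces $\vphi_s(a_r) = 0$, whence $a_r \cdot \eta_s = 0$ and the product collapses.

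For part (b), I use the identification $\T_\la(X) \simeq \ca_\la(\F\C_G X)$, under which $\bo{K}_{\Bx, \la_\ast}$ sits in the $e_G$-fibre of the Fell bundle and every $\la(\xi_r a_r)$ in the $r$-fibre. The canonical embedding $\iota \colon \F\C_G X \hookrightarrow \T_{\cov}^{\fock}(X) = \ca_{\max}(\F\C_G X)$ is a Fell bundle representation, so it intertwines the fibre-wise product $[\F\C_G X]_{e_G} \cdot [\F\C_G X]_r \subseteq [\F\C_G X]_r$. Transferring the identities from (a) through $\iota$ therefore gives
\[
\iota(b_\Bx) \, \iota(\la(\xi_r a_r)) = \iota(\la(b_\Bx \xi_r a_r))
\]
in $\T_{\cov}^{\fock}(X)$ (with both sides zero when $r \notin \Bx$). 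Applying the induced $*$-representation $t_\ast$ and using $t_\ast \iota \la(\eta_p) = t(\eta_p)$ then yields the desired formula on the generators of $X_F$.

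The delicate point is part (a)(ii): one must determine precisely which $s \in P$ satisfy $rs \in \Bx$ when $r \notin \Bx$, and verify that $I_{r^{-1}(r\vee F)}$ simultaneously annihilates $\vphi_s(a_r)$ for all such $s$. This rests on the identification $\Bx = K_{F_\Bx}$ and the monotonicity $F_1 \supseteq F_2 \Rightarrow I_{r^{-1}(r \vee F_1)} \subseteq I_{r^{-1}(r \vee F_2)}$ recorded just before the statement, which reduces matters to the case $F = F_\Bx$.
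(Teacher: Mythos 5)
Your proposal is correct and follows essentially the same route as the paper's proof: a direct Fock-space computation for part (a), with the case split on whether $rs \in \Bx$ and the containment $I_{r^{-1}(r\vee F)} \subseteq I_{r^{-1}K_{\{r,g\}}} \subseteq \ker\vphi_s$ doing the work in (a)(ii), followed by multiplicativity of the Fell bundle embedding $\iota$ and the identity $t_\ast \iota \la = t$ for part (b). The only under-specified spot is (a)(i): computing the vector $b_{\Bx}\xi_r \in X_r$ is not yet the operator identity $b_{\Bx}\la(\xi_r) = \la(b_{\Bx}\xi_r)$ on $\F X$ --- you must also evaluate both sides on an arbitrary $\eta_q \in X_q$ and use that $r \in \Bx$ forces $rq \in \Bx$ (since $\Bx$ is a right ideal), which is exactly the kind of check you carry out in (ii) and which the paper makes explicit.
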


\begin{proof}
(a) For item (i), if $r \in \Bx$, then we have that
\[
\la(\xi_{p_1})^* \la(\xi_{q_1}) \cdots \la(\xi_{p_n})^* \la(\xi_{q_n}) \xi_r
=
\xi_{p_1}^* \xi_{q_1} \cdots \xi_{p_n}^* \xi_{q_n} \xi_r \in X_{p_1}^* \cdot X_{q_1} \cdots X_{p_n}^* \cdot X_{q_n} \cdot X_r \subseteq X_r,
\]
for $p_i, q_i \in P$ such that $\Bx = q_n^{-1} p_n \dots q_1^{-1} p_1 P$ and $p_1^{-1} q_1 \cdots p_n^{-1} q_n = e_G$.
Moreover, since $r \in \Bx$, then $rq \in \Bx$ for all $q \in P$, and a similar computation gives that
\begin{align*}
\la(\xi_{p_1}^* \xi_{q_1} \cdots \xi_{p_n}^* \xi_{q_n} \xi_r) \xi_q
& =
\xi_{p_1}^* \xi_{q_1} \cdots \xi_{p_n}^* \xi_{q_n} \xi_r \xi_q
=
\la(\xi_{p_1})^* \la(\xi_{q_1}) \cdots \la(\xi_{p_n})^* \la(\xi_{q_n}) \la(\xi_r) \xi_q.
\end{align*}
By applying to linear combinations and then passing to limits we obtain the required equality for all $b_{\Bx} \in \bo{K}_{\Bx, \la_\ast}$.

For item (ii), if $\Bx = \mt$ then by Proposition \ref{P:Fock cov rel} we have that $b_{\Bx} = 0$, and thus the equality is trivially satisfied.
So suppose that $\Bx \neq \mt$.
We have $b_{\Bx} \xi_p = 0$ for all $p \notin \Bx$, by applying first on elementary generators of $\bo{K}_{\Bx, \la_\ast}$, and then extending to limits of finite sums.
Thus, if $r \notin \Bx$, then we get that 
\[
\la(b_{\Bx} \xi_r a_r) = \la(0 \cdot a_r) = 0 \foral \xi_r \in X_r, a_r \in I_{r^{-1}(r \vee F)}.
\]
Let now $F \supseteq F_{\Bx}$; we will show that 
\[
b_{\Bx} \xi_r a_r \xi_q = 0 \foral q \in P, \xi_r \in X_r, a_r \in I_{r^{-1}(r \vee F)}.
\]
We consider the following cases.

\smallskip

\noindent
Case 1. If $rq \notin \Bx$, we then have that $b_{\Bx} \xi_r a_r \xi_q = 0$, as $\xi_r a_r \xi_q \in X_{rq}$.

\smallskip

\noindent
Case 2. If $rq \in \Bx$, then $rq \in gP$ for all $g \in F_{\Bx}$, i.e., $rq \in K_{\{r,g\}}$ for all $g \in F_{\Bx}$.
Since $r \notin \Bx$, then there exists a $g' \in F_{\Bx}$ such that $r \notin K_{\{r,g'\}}$ but for which $rq \in K_{\{r, g'\}}$.
Hence by definition we have that
\begin{align*}
a_r \in I_{r^{-1}(r \vee F)} 
& \subseteq I_{r^{-1}(r \vee F_{\Bx})} 
= \bigcap_{g \in F_{\Bx}} \bigcap_{s \in K_{\{r, g\}}} \ker \vphi_{r^{-1} s} 
\subseteq \ker \vphi_{r^{-1} (r q)} = \ker \vphi_{q}.
\end{align*}
Therefore $a_r \xi_q = 0$ for all $\xi_q \in X_q$, from which we deduce that $b_{\Bx} \xi_r a_r \xi_q = 0$.

\smallskip

\noindent
(b) Let $b_{\Bx} \in \bo{K}_{\Bx, \la_\ast}$ and $\xi_r a_r \in X_F$ with $F \supseteq F_{\Bx}$.
From item (a,ii) we have that
\begin{align*}
t(b_{\Bx} \xi_r a_r)
& =
t_\ast \iota \la(b_{\Bx} \xi_r a_r)
=
t_\ast(\iota(b_{\Bx}) \iota \la(\xi_r a_r) )
=
t_\ast(\iota(b_{\Bx})) t_\ast \iota \la(\xi_r a_r)
=
t_\ast(\iota(b_{\Bx})) t(\xi_r a_r),
\end{align*}
as required.
\end{proof}

As an immediate consequence we have that the $\bo{K}$-cores embed injectively in every equivariant injective representation of $\F\C_G X$.
This is the analogue of the embedding of compacts in the right LCM-semigroup setting, and explains the use of $\bo{K}$- for denoting these cores.

\begin{proposition}\label{P:K-injective}
Let $P$ be a unital subsemigroup of a discrete group $G$ and let $X$ be a product system over $P$.
If $t$ is an equivariant injective Fock-covariant representation of $X$, then $t_\ast$ is injective on the $\bo{K}$-cores of $[\T_{\cov}^{\fock}(X)]_e$.
\end{proposition}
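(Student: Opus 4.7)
The plan is to reduce injectivity of $t_\ast$ on the $\bo{K}$-cores inside $[\T_{\cov}^{\fock}(X)]_e$ to injectivity of an adjointable operator on the Fock module. First I would invoke the standard fact that the canonical $*$-epimorphism $q \colon \T_{\cov}^{\fock}(X) \simeq \ca_{\max}(\F\C_G X) \to \T_\la(X) \simeq \ca_\la(\F\C_G X)$ is isometric on each fiber of the Fell bundle $\F\C_G X$; hence it restricts to an isometric bijection from the $\bo{K}$-core in $[\T_{\cov}^{\fock}(X)]_e$ onto $\bo{K}_{\Bx, \la_\ast} \subseteq \L(\F X)$, and this bijection is inverted by the map $\iota$ used in Proposition \ref{P:bx acts}(b). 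Consequently any element of the $\bo{K}$-core in $[\T_{\cov}^{\fock}(X)]_e$ annihilated by $t_\ast$ equals $\iota(b_{\Bx})$ for a unique $b_{\Bx} \in \bo{K}_{\Bx, \la_\ast}$, and it suffices to show $b_{\Bx} = 0$ in $\L(\F X)$.

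Next I would apply Proposition \ref{P:bx acts}(b): for every $F \supseteq F_{\Bx}$ and every $\xi_r a_r \in X_F$ we obtain
\[
t(b_{\Bx} \xi_r a_r) = t_\ast(\iota(b_{\Bx})) \, t(\xi_r a_r) = 0,
\]
and injectivity of $t$ on $A$, which passes to each $X_p$, forces $b_{\Bx} \xi_r a_r = 0$.

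To conclude, I would use that $b_{\Bx} \in [\T_\la(X)]_e$ preserves the decomposition $\F X = \sumoplus_{r \in P} X_r$, so it is enough to show $b_{\Bx} \xi_r = 0$ for every $r \in P$ and $\xi_r \in X_r$. For $r \notin \Bx$ this is automatic because each generator $\la(\xi_{p_1})^* \la(\xi_{q_1}) \cdots \la(\xi_{p_n})^* \la(\xi_{q_n})$ of $\bo{K}_{\Bx, \la_\ast}$ vanishes on $X_r$ (the intermediate group conditions in the proof of Proposition \ref{P:Fock cov rel} force $r \in q_n^{-1} p_n \cdots q_1^{-1} p_1 P = \Bx$ for a non-zero output), so by density and continuity $b_{\Bx} \xi_r = 0$. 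For $r \in \Bx$, unwinding the definitions shows $r \in K_{\{r,g\}}$ for every $g \in F_{\Bx}$, whence $I_{r^{-1} K_{\{r,g\}}} = A$ and $I_{r^{-1}(r \vee F_{\Bx})} = A$; taking $F = F_{\Bx}$ in the previous step yields $b_{\Bx} \xi_r \cdot a_r = 0$ for every $a_r \in A$, and a contractive approximate identity of $A$ combined with the $A$-linearity of adjointable operators gives $b_{\Bx} \xi_r = 0$.

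I expect the main obstacle to lie in the $r \in \Bx$ case, where two structural points must be handled carefully: checking that the Sehnem ideal $I_{r^{-1}(r \vee F_{\Bx})}$ really collapses to $A$, and recovering $b_{\Bx}$ from $\iota(b_{\Bx})$ as a genuine adjointable operator on $\F X$ via the fiberwise isometry of the reduction map. Once these are in place, Proposition \ref{P:bx acts}(b) and the approximate-identity argument close the proof without further difficulty.
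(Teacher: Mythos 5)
Your argument is correct and follows essentially the same route as the paper: identify the $\bo{K}$-core of $[\T_{\cov}^{\fock}(X)]_e$ with $\bo{K}_{\Bx, \la_\ast}$ via the fiberwise-isometric reduction map, use Proposition \ref{P:bx acts} to move $t_\ast(\iota(b_{\Bx}))$ past the generators of the Fock module, and conclude from injectivity of $t$ on each $X_r$. The only (harmless) difference is that for $r \in \Bx$ you invoke part (b) with $F = F_{\Bx}$ and the observation that $I_{r^{-1}(r \vee F_{\Bx})} = A$, then remove $a_r$ by an approximate identity, whereas the paper argues by contradiction and applies part (a,i) directly to a $\xi_r$ with $b_{\Bx}\xi_r \neq 0$, which already forces $r \in \Bx$.
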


\begin{proof}
Let $\mt \neq \Bx \in \J$.
To reach a contradiction let $0 \neq b_{\Bx} \in \bo{K}_{\Bx, \la_\ast}$ such that $t_\ast( \iota(b_{\Bx}) ) = 0$.
Then there exists a $\xi_r \in X_r$ such that $b_{\Bx} \xi_r \neq 0$.
Hence $r \in \Bx$, and by Proposition \ref{P:bx acts} item (a,i) we have that
\[
0 = t_\ast( \iota( b_{\Bx}) ) t(\xi_r) = t_\ast( \iota(b_{\Bx}) \iota\la(\xi_r) ) = t_\ast \iota \la(b_{\Bx} \xi_r) = t(b_{\Bx} \xi_r).
\]
Since $t$ is injective on $X$, we thus have that $b_{\Bx} \xi_r = 0$ which is a contradiction.
\end{proof}

\subsection{Strongly covariant representations}

We can now use the ideals $I_{r^{-1}(r \vee F)}$ to build an augmented Fock space and then pass to the strongly covariant setting in the same spirit with \cite{Seh18, SY10}.
The proofs of \cite[Sections 3.1 and 3.2]{Seh18} require only the algebraic properties of the Fock representation which we have established here. 
For our purposes we will follow the approach of \cite{DKKLL20} in this endeavour.

Recall the definition of $I_{r^{-1}(r \vee F)}$ for a finite $F \subseteq G$ and $r \in P$ from the previous subsection, and let the C*-correspondences
\[
X_F := \sumoplus_{r \in P} X_r I_{r^{-1} (r \vee F)}
\qand
X_F^+ := \sumoplus_{g \in G} X_{gF}.
\]
We declare that $X_\mt = X_\mt^+ = \F X$.
Each $X_F^+$ is reducing for the coaction
\[
\de \colon \T_\la(X) \longrightarrow \T_\la(X) \otimes \ca_{\max}(G); \la(\xi_p) \mapsto \la(\xi_p) \otimes u_p,
\]
giving rise to a $*$-representation
\[
\Phi_F \colon \T_\la(X) \longrightarrow \L( X_F^+ ) ; \la(\xi_p) \mapsto (\la(\xi_p) \otimes u_p)|_{X_F^+}.
\]
Here we make the identification
\[
X_{gF} \longrightarrow X_{gF} \otimes \de_g ; \xi_r a_r \mapsto \xi_r a_r \otimes \de_g \qfor \xi_r \in X_r, a_r \in I_{r^{-1}(r \vee gF)}.
\]
Moreover $X_F \subseteq X_F^+$ is reducing for $[\T(X)]_e$ and so we obtain the representation
\[
\bigoplus\limits_{\textup{fin } F \subseteq G} \Phi_F(\cdot)|_{X_F} \colon [\T_\la(X)]_e \longrightarrow \prod\limits_{\text{fin } F \subseteq G} \L(X_F).
\]
We fix the ideal $\I_{\scv, e}$ in $[\T(X)]_e$ by using the corona universe, namely
\[
b \in \I_{\scv, e} \qiff \bigoplus\limits_{\textup{fin } F \subseteq G} \Phi_F(\la_\ast(b))|_{X_F} \in c_0(\L(X_F) \mid \textup{fin } F \subseteq G).
\]

\begin{definition}
Let $P$ be a unital subsemigroup of a discrete group $G$ and let $X$ be a product system over $P$.
Suppose that $\T(X) = \ca(\wt{t})$ for a representation $\wt{t}$ of $X$ and let the induced ideal 
\[
\I_{\scv} := \sca{ \I_{\scv, e} }.
\]
We write $A \times_X P$ for the equivariant quotient of $\T(X)$ by $\I_{\scv}$.
We define the \emph{strongly covariant} bundle of $X$ be the Fell bundle
\[
\S\C_G X := \Big\{ [A \times_X P]_g \Big\}_{g \in G},
\]
given by the coaction of $G$ on $A \times_X P$.
A representation of $\S\C_G X$ will be called a \emph{strongly covariant representation of $X$}.
\end{definition}

\begin{remark}
With this notation it is immediate that $A \times_X P$ is $\ca_{\max}(\S\C_G X)$.
We will also use the notation $A \times_{X, \la} P$ for the reduced $\ca_\la(\S\C_G X)$.
Eventually we will show that those are independent from the group embedding.
\end{remark}

Using this presentation of strong covariance from \cite{DKKLL20} we can immediately show that strongly covariant representations are Fock-covariant.
This was shown only for specific examples in \cite{Seh18}.

\begin{proposition} \label{P:sc is fc}
Let $P$ be a unital subsemigroup of a discrete group $G$ and let $X$ be a product system over $P$.
Then the canonical $*$-epimorphism $\T(X) \to A \times_X P$ factors through $\T(X) \to \T_{\cov}^{\fock}(X)$.
\end{proposition}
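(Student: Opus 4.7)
The plan is to reduce the factorization to a containment of ideals, namely
$\J_{\cov}^{\fock} \subseteq \I_{\scv}$,
and then invoke the universal property of the quotient. Since both ideals are induced (i.e., generated by their intersections with the fixed point algebra $[\T(X)]_e$), it suffices to prove the inclusion at the level of the fibers over $e_G$, that is,
\[
\J_{\cov, e}^{\fock} = \ker \la_\ast \cap [\T(X)]_e \subseteq \I_{\scv, e}.
\]

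The key observation is that the augmented Fock representations $\Phi_F \colon \T_\la(X) \to \L(X_F^+)$ are by construction defined \emph{on $\T_\la(X)$}, i.e.\ they factor through $\la_\ast$. Thus for every $b \in \T(X)$ and every finite $F \subseteq G$ the operator $\Phi_F(\la_\ast(b))$ depends only on the image $\la_\ast(b) \in \T_\la(X)$. In particular, if $b \in \ker \la_\ast \cap [\T(X)]_e$, then $\la_\ast(b) = 0$, so $\Phi_F(\la_\ast(b))|_{X_F} = 0$ for every finite $F \subseteq G$. Consequently, the net $\bigl( \Phi_F(\la_\ast(b))|_{X_F}\bigr)_F$ is identically zero, which trivially belongs to $c_0\bigl( \L(X_F) \mid F \subseteq G \text{ finite}\bigr)$. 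By the very definition of $\I_{\scv, e}$ we then get $b \in \I_{\scv, e}$, as desired.

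Taking the induced ideal on both sides yields $\J_{\cov}^{\fock} \subseteq \I_{\scv}$. Therefore the canonical $*$-epimorphism $q_{\scv} \colon \T(X) \to A \times_X P$ vanishes on $\J_{\cov}^{\fock} = \ker q_{\cov}^{\fock}$, and hence factors uniquely through the quotient map $q_{\cov}^{\fock} \colon \T(X) \to \T_{\cov}^{\fock}(X)$, giving the commutative diagram
\[
\xymatrix{
\T(X) \ar[rr]^{q_{\scv}} \ar[dr]_{q_{\cov}^{\fock}} & & A \times_X P \\
& \T_{\cov}^{\fock}(X) \ar[ur] &
}
\]
that fixes $X$. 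There is essentially no obstacle here: the argument is formal once one observes that $\Phi_F$ is already defined on $\T_\la(X)$, so Fock-covariance is automatically weaker than strong covariance in this setup. The only thing worth pointing out is that this observation depends crucially on having set up $\T_{\cov}^{\fock}(X)$ and $A \times_X P$ as equivariant quotients by \emph{induced} ideals, so that inclusion on the $e_G$-fibers propagates to inclusion of the ideals.
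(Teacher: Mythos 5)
Your proof is correct and follows essentially the same route as the paper: the core observation in both is that $\Phi_F$ is defined on $\T_\la(X)$, so $\ker\la_\ast \cap [\T(X)]_e \subseteq \I_{\scv,e}$, and the containment of the induced (generated) ideals then gives the factorization. The paper phrases the final step as a canonical Fell bundle homomorphism between the graded quotients rather than as a direct ideal inclusion, but the content is identical.
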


\begin{proof}
Let $q \colon \T(X) \to A \times_X P$ be the canonical quotient map by the ideal $\sca{\I_{\scv, e}}$.
Also recall the canonical $*$-epimorphisms
\[
\T(X) \longrightarrow \T_{\cov}^{\fock}(X) \longrightarrow \T_\la(X).
\]
By definition, if $x \in \ker\la_\ast \bigcap [\T(X)]_e$ then $x \in \ker q \bigcap [\T(X)]_e$.
Moreover, if $x \in \ker \la_\ast \bigcap [\T(X)]_g$ then $x^*x \in \ker \la_\ast \bigcap [\T(X)]_e$, and thus $x \in \ker q \bigcap [\T(X)]_g$.
This induces a canonical Fell bundle homomorphism that completes the diagram
\[
\xymatrix{
\{ [\T(X)]_g \}_{g \in G} \ar[rr]^q \ar[dr]_{\la} & & \S\C_G X \\
& \F \C_G X \ar@{.>}[ur] &
}
\]
and thus produces a canonical $*$-epimorphism $\T_{\cov}^{\fock}(X) \longrightarrow \ca_{\max}(\S\C_G X) = A \times_X P$.
\end{proof}

In \cite[Proposition 3.5]{Seh18} it is proven that $A \hookrightarrow A \times_X P$ faithfully.
This elegant proof applies directly here.
Indeed the only place that requires some attention is the argument in \cite[Proof of Proposition 3.5]{Seh18} that if 
\[
\sca{\vphi_{r_1}(a) \xi_{r_1}, \vphi_{r_1}(a) \xi_{r_1}} \notin \ker \vphi_{r_1^{-1} r_2},
\]
then $a \notin \ker \vphi_{r_2}$.
This still holds in our setting since $X_{r_1} \cdot X_{r_1^{-1} r_2} \subseteq X_{r_1 r_1^{-1} r_2} = X_{r_2}$.
Let us include the details for the sake of completeness.

\begin{proposition} \label{P:A emb}
Let $P$ be a unital subsemigroup of a discrete group $G$ and let $X$ be a product system over $P$.
Then 
\[
\la(A) \bigcap c_0(\L(X_F) \mid \textup{finite } F \subseteq G) = (0).
\]
\end{proposition}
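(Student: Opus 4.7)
We follow the scheme of \cite[Proposition 3.5]{Seh18}, adapted to our weaker product system axioms. Let $0 \neq a \in A$; we must show that the image of $\la(a)$ under the map
\[
[\T(X)]_e \longrightarrow \prod_{\textup{fin } F \subseteq G} \L(X_F);\ b \longmapsto \bigoplus_F \Phi_F(\la_\ast(b))|_{X_F}
\]
does not lie in $c_0$. Since the action of $\Phi_F(\la(a))$ on a summand $X_r \cdot I_{r^{-1}(r \vee F)}$ of $X_F$ is left multiplication by $a$, i.e.\ $\xi_r a_r \mapsto (a\xi_r)a_r$, it suffices to prove the following: for every finite $F_0 \subseteq G$ there exist $F \supseteq F_0$ and $r \in P$ together with $\xi_r \in X_r$ and $a_r \in I_{r^{-1}(r \vee F)}$ such that $\|a \xi_r a_r\| \geq \tfrac{1}{2}\|a\|$ (or any fixed positive fraction of $\|a\|$).

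The strategy is the iterative one from Sehnem: start from $r_0 = e$, where $\vphi_{r_0}(a) = a \neq 0$, and successively enlarge $r_0$ to $r \in P$ that absorbs the obstructions coming from $F_0$, while preserving $\vphi_r(a) \neq 0$. The absorption step rests on the algebraic identity that the text singles out: \emph{if $\sca{\vphi_{r_1}(a)\xi_{r_1}, \vphi_{r_1}(a)\xi_{r_1}} \notin \ker \vphi_{r_1^{-1} r_2}$, then $a \notin \ker \vphi_{r_2}$}. In our setting this follows immediately because $X_{r_1} \cdot X_{r_1^{-1} r_2} \subseteq X_{r_1 r_1^{-1} r_2} = X_{r_2}$: given $\xi_{r_1} \in X_{r_1}$ and $\eta \in X_{r_1^{-1} r_2}$ witnessing the non-vanishing, the element $\xi_{r_1} \eta \in X_{r_2}$ satisfies
\[
\|a (\xi_{r_1} \eta)\|^2 = \|\eta^* \sca{\vphi_{r_1}(a)\xi_{r_1},\vphi_{r_1}(a)\xi_{r_1}} \eta\| > 0,
\]
so $\vphi_{r_2}(a) \neq 0$. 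Hence whenever the ideal $I_{r_1^{-1}(r_1 \vee F_0)}$ fails to detect $\vphi_{r_1}(a)$, we can pass to a larger $r_2 \in P$ which both (i) preserves $\vphi_{r_2}(a) \neq 0$ and (ii) makes $r_2 \in K_{\{r_2,g\}} = r_2 P \cap gP$ for more $g \in F_0$, thereby flipping the corresponding factor of $I_{r_2^{-1}(r_2 \vee F)}$ to the "otherwise" case $A$.

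After finitely many such upgrades, indexed by the elements of $F_0$, one reaches $r \in P$ with $\vphi_r(a) \neq 0$ and with $I_{r^{-1}(r \vee F_0 \cup \{r\})} = A$, because every $g \in F_0 \cup \{r\}$ falls in the "otherwise" clause of the definition of $I_{r^{-1} K_{\{r,g\}}}$. Taking $F = F_0 \cup \{r\}$ and any $\xi_r \in X_r$ with $a\xi_r \neq 0$, the summand $X_r \subseteq X_F$ witnesses $\|\Phi_F(\la(a))|_{X_F}\| \geq \|a\xi_r\|/\|\xi_r\|$, bounded below uniformly in $F_0$, and the conclusion follows.

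The main obstacle is the bookkeeping in the inductive step: at each stage one must simultaneously ensure that the chosen $r_{j+1}$ lies in enough of the $gP$ (for $g$ already processed) and that $\vphi_{r_{j+1}}(a)$ survives. This is precisely where the highlighted inclusion $X_{r_1} \cdot X_{r_1^{-1} r_2} \subseteq X_{r_2}$ is indispensable, since in Sehnem's original setting one had the unitary $X_{r_1} \otimes_A X_{r_1^{-1} r_2} \simeq X_{r_2}$ of Fowler product systems, while here only the inclusion is available — yet, as shown above, the inclusion already suffices for the norm estimate that drives the induction.
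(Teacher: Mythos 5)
Your plan is the paper's own: both adapt the iterative scheme of Sehnem's Proposition 3.5, and you correctly identify and verify the one point that needs care here, namely that the inclusion $X_{r_1} \cdot X_{r_1^{-1} r_2} \subseteq X_{r_2}$ alone (rather than Fowler's unitary) suffices to propagate $a \notin \ker\vphi_{r_2}$ from $\sca{\vphi_{r_1}(a)\xi_{r_1}, \vphi_{r_1}(a)\xi_{r_1}} \notin \ker \vphi_{r_1^{-1} r_2}$. There are, however, two genuine gaps. First, the termination of the induction is misstated: the upgrade step (producing $g_{j+1}$ and $r_{j+1}$) is only available when $\vphi_{r_j}(a)$ vanishes on $X_{r_j} I_{r_j^{-1}(r_j \vee F_j)}$, so the process need not end with every $g$ pushed into the ``otherwise'' clause and $I_{r^{-1}(r \vee F)} = A$; it may halt with $F_j \neq \mt$ but the restriction of $\vphi_{r_j}(a)$ to $X_{r_j} I_{r_j^{-1}(r_j \vee F_j)}$ already nonzero. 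The correct termination argument is that the finite sets $F \supsetneq F_1 \supsetneq F_2 \supsetneq \cdots$ strictly decrease (each $g_{j+1}$ is removed and nothing re-enters, since $r_{j+1} \in r_j P$), so after finitely many steps one lands in one of the two favourable cases; either way $\Phi_F(\la(a))|_{X_F} \neq 0$. Enlarging $F_0$ to $F_0 \cup \{r\}$ is unnecessary: the argument runs for the given $F$.

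Second, and more seriously, nonvanishing of $\Phi_F(\la(a))|_{X_F}$ does not by itself yield a lower bound on $\nor{\Phi_F(\la(a))|_{X_F}}$ that is uniform over $F$. Your witness only gives $\nor{a\xi_r}/\nor{\xi_r}$, and since $r$ and $\xi_r$ depend on $F_0$ this quantity can be arbitrarily small, so the net of norms could still converge to $0$; the assertion ``bounded below uniformly in $F_0$'' is unjustified as written. The missing step --- which is precisely the paper's opening sentence --- is that $a \mapsto \Phi_F(\la(a))|_{X_F}$ is a $*$-homomorphism of the C*-algebra $A$ into $\L(X_F)$; once the induction shows it is nonzero on every $0 \neq a \in A$ for each fixed $F$, it is injective and hence isometric, so $\nor{\Phi_F(\la(a))|_{X_F}} = \nor{a}$ for every finite $F$, and membership in $c_0(\L(X_F) \mid \textup{finite } F \subseteq G)$ forces $a = 0$. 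With these two repairs your argument coincides with the one in the paper.
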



\begin{proof}
It suffices to show that $\Phi_F(\la(a))|_{X_F} \neq 0$ for every finite $F \subseteq G$, so that $\nor{\Phi_F(a)|_{X_F}} = \nor{a}$.
We move in steps.
If $a I_{e \vee F} \neq (0)$, then there is nothing to show.
If $a I_{e \vee F} = (0)$, then there exists a $g_1 \in F$ such that $a \notin I_{K_{\{e, g_1\}}}$.
Thus there exists an $r_1 \in P \cap g_1 P$ with $\vphi_{r_1}(a) \neq 0$.
Set
\[
F_1 := \{g \in F \mid K_{\{r_1, g\}} \neq \mt \textup{ and } r_1 \notin K_{\{r_1, g\}} \}.
\]
Then $g_1 \notin F_1$ and $F_1 \subsetneq F$, satisfying
\[
X_{r_1} I_{r_1^{-1}(r_1 \vee F)} = X_{r_1} I_{r_1^{-1}(r_1 \vee F_1)}.
\]
If $F_1 = \mt$, then $I_{r_1^{-1}(r_1 \vee F_1)} = A$, and so $X_{r_1} I_{r_1^{-1}(r_1 \vee F)} = X_{r_1}$.
Since $\vphi_{r_1}(a) \neq 0$, we are done.
Likewise if $\vphi_{r_1}(a)|_{X_{r_1} I_{r_1^{-1}(r_1 \vee F_1)}} \neq 0$.

Suppose that $F_1 \neq \mt$ and that $\vphi_{r_1}(a)|_{X_{r_1} I_{r_1^{-1}(r_1 \vee F_1)}} = 0$.
Then there exists a $g_2 \in F_1$ and a $\xi_{r_1} \in X_{r_1}$ so that
\[
\sca{\vphi_{r_1}(a) \xi_{r_1}, \vphi_{r_1}(a) \xi_{r_1}} \notin I_{r_1^{-1}K_{\{r_1, g_2\}}}.
\]
Therefore there exists an $r_2 \in K_{\{r_1, g_2\}}$ such that
\[
\sca{\vphi_{r_1}(a) \xi_{r_1}, \vphi_{r_1}(a) \xi_{r_1}} \notin \ker \vphi_{r_1^{-1} r_2}.
\]
From this we derive that $a \notin \ker \vphi_{r_2}$, since 
\[
X_{r_1} \cdot X_{r_1^{-1} r_2} \subseteq X_{r_1 r_1^{-1} r_2} = X_{r_2}.
\]
This is the only place where the proof differs from that of \cite[Proposition 3.5]{Seh18}.
Let now
\[
F_2 := \{g \in F \mid K_{\{r_2, g\}} \neq \mt \textup{ and } r_2 \notin K_{\{r_2, g\}} \},
\]
for which we have that $F_2 \subsetneq F_1$ and that
\[
X_{r_2} I_{r_2^{-1}(r_2 \vee F)} = X_{r_2} I_{r_2^{-1}(r_2 \vee F_2)}.
\]
If $F_2 = \mt$ or $\vphi_{r_2}(a)|_{X_{r_2} I_{r_2^{-1}(r_2 \vee F_2)}} \neq 0$, then we are done.
Otherwise we can deduce an $F_3 \subsetneq F_2$ to reduct.
Since $F$ is finite this process stops at some $r_k$ with $\vphi_{r_k}(a)|_{X_{r_k} I_{r_k^{-1}(r_k \vee F)}} \neq 0$, and the proof is complete.
\end{proof}

The Bessel inequality of Corollary \ref{C:Bessel} and Fock-covariance of Proposition \ref{P:sc is fc} provide a property of $\ker q_{\scv}$ that is key for both \cite[Lemma 3.6]{Seh18} and \cite[Theorem 4.1]{CLSV11}.

\begin{proposition} \label{P:fpa t}
Let $P$ be a unital subsemigroup of a discrete group $G$ and let $X$ be a product system over $P$.
If $t$ is an injective Fock-covariant representation of $X$, then
\[
\ker t_\ast \bigcap [\T_{\cov}^{\fock}(X)]_e \subseteq \ker q_{\scv} \bigcap [\T_{\cov}^{\fock}(X)]_e
\]
for the canonical $*$-epimorphism $q_{\scv} \colon \T_{\cov}^{\fock}(X) \to A \times_X P$.
\end{proposition}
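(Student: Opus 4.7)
The plan is to exploit the inductive limit description of $[\T_{\cov}^{\fock}(X)]_e$ from Proposition \ref{P:ind lim}(c) together with the action formula of Proposition \ref{P:bx acts}(b), and to characterise $\ker q_{\scv} \cap [\T_{\cov}^{\fock}(X)]_e$ via the augmented Fock representations $\Phi_F$. Since $\Phi_F\circ\la_\ast$ vanishes on $\ker q_{\cov}^{\fock}$, it factors through $\T_{\cov}^{\fock}(X)$ as a $\ast$-representation $\wh{\Phi}_F \colon \T_{\cov}^{\fock}(X) \to \L(X_F^+)$, and for any lift $\wt{b} \in [\T(X)]_e$ of our element $b$ one has $\Phi_F(\la_\ast(\wt{b}))|_{X_F} = \wh{\Phi}_F(b)|_{X_F}$. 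Consequently, showing $b \in \ker q_{\scv}$ reduces to proving that $\nor{\wh{\Phi}_F(b)|_{X_F}} \to 0$ along the directed set of finite subsets $F \subseteq G$.

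The heart of the argument will be a Bessel-type estimate on $\bo{B}$-cores. Fix $\eps > 0$ and use Proposition \ref{P:ind lim}(c) to approximate $b$ within $\eps$ by an element $b_n = \sum_{\Bx \in \F_n} b_{\Bx}$ of some $\bo{B}$-core $\bo{B}_{\F_n, \la_\ast}$, freely identifying the fibres of $\T_{\cov}^{\fock}(X)$ with those of $\T_\la(X)$ through the Fell bundle embedding $\iota$. From $t_\ast(b) = 0$ and contractivity of $\ast$-homomorphisms we get $\nor{t_\ast(b_n)} \leq \eps$. For any finite $F \subseteq G$ containing the finite set $\bigcup_{\Bx \in \F_n} F_{\Bx}$ and any simple tensor $\xi_r a_r \in X_F$, Proposition \ref{P:bx acts}(b) applied term-by-term yields
\[
t(b_n \xi_r a_r) = t_\ast(\iota(b_n))\, t(\xi_r a_r),
\]
and injectivity of $t$ on $X$ then gives $\nor{b_n \xi_r a_r}_{X_r} \leq \nor{t_\ast(b_n)} \cdot \nor{\xi_r a_r} \leq \eps \nor{\xi_r a_r}$. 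Taking a supremum over the unit ball of $X_F$ produces the core estimate $\nor{\wh{\Phi}_F(b_n)|_{X_F}} \leq \eps$.

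Combining with the contractivity bound $\nor{\wh{\Phi}_F(b - b_n)|_{X_F}} \leq \nor{b - b_n}_{\T_{\cov}^{\fock}(X)} \leq \eps$, we get $\nor{\wh{\Phi}_F(b)|_{X_F}} \leq 2\eps$ for every $F$ past $\bigcup_{\Bx \in \F_n} F_{\Bx}$; as $\eps$ was arbitrary, $\wh{\Phi}_F(b)|_{X_F} \to 0$ along the directed set of finite subsets of $G$, whence $b$ lies in the image of $\I_{\scv, e}$ and so $b \in \ker q_{\scv}$. The main subtlety I anticipate is the book-keeping between $\T(X)$, $\T_{\cov}^{\fock}(X)$, and $\T_\la(X)$: Proposition \ref{P:bx acts}(b) is stated in terms of the lift $\iota(b_{\Bx})$ of an element of $\T_\la(X)$, so one must verify that the same identity applies to $b_n$ viewed natively as a $\bo{B}$-core element of $\T_{\cov}^{\fock}(X)$ through the Fell bundle identification. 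A secondary point is matching the $c_0$-condition defining $\I_{\scv, e}$ with the directed-set convergence our estimate provides, which is immediate from the definition of $\I_{\scv}$ via the augmented Fock bundle.
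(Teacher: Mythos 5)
Your overall strategy is the same as the paper's: reduce to showing $\|\Phi_F\la_\ast(b)|_{X_F}\|\to 0$ along finite $F\subseteq G$, approximate $b$ by a $\bo{B}$-core element $b'=\sum_j\iota(b_{\Bx_j})$ via Proposition \ref{P:ind lim}(c) (the paper additionally invokes Proposition \ref{P:Fock cov rel} to take the $\Bx_j$ non-empty), observe $\|t_\ast(b')\|\le\eps$ from $t_\ast(b)=0$, and use Proposition \ref{P:bx acts}(b) to transfer the action of $b'$ on $X_F$ into $\ca(t)$. Where you diverge is in how the norm estimate $\|\Phi_F\la_\ast(b')|_{X_F}\|\le\eps$ is extracted: the paper forms $\eta=\sum_r t(\xi_r a_r)$ for a \emph{general} $\xi=\sum_r\xi_ra_r\in X_F$, identifies $\|\la_\ast(b')\xi\|_{\F X}$ with $\|t_\ast(b')\eta\|_{Y_t}$ using injectivity of $t$ on $A$, and then applies the (matricial) Bessel inequality of Corollary \ref{C:Bessel}. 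You instead estimate summand-by-summand, using only that $t$ is isometric on each fibre $X_r$ together with submultiplicativity in $\B(H)$; if completed, this would actually bypass Corollary \ref{C:Bessel} in this proof.

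The gap is in the sentence ``taking a supremum over the unit ball of $X_F$ produces the core estimate.'' Your inequality $\|b'\xi_ra_r\|_{X_r}\le\|t_\ast(b')\|\,\|\xi_ra_r\|$ is established only for elements of a single summand $X_rI_{r^{-1}(r\vee F)}$, whereas the unit ball of $X_F=\sum^\oplus_r X_rI_{r^{-1}(r\vee F)}$ contains vectors spread over infinitely many summands; a supremum over single-summand vectors does not in general control the operator norm. What rescues the argument is that $\Phi_F(\la_\ast(b'))|_{X_F}$ is block-diagonal for this decomposition (each $b_{\Bx_j}$ maps $X_r$ into $X_r$ by Proposition \ref{P:bx acts}(a)), and for a block-diagonal adjointable operator $T=\oplus_rT_r$ one has $\langle T_r\xi_r,T_r\xi_r\rangle\le\|T_r\|^2\langle\xi_r,\xi_r\rangle$ in $A$, whence $\|T\|=\sup_r\|T_r\|$. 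You must say this explicitly; it is exactly the point at which the paper instead deploys Corollary \ref{C:Bessel} to handle a general $\xi\in X_F$ in one stroke. With that observation added, your proof is correct, and your book-keeping between $\T(X)$, $\T_{\cov}^{\fock}(X)$ and $\T_\la(X)$ (the factorization of $\Phi_F\la_\ast$ through $\T_{\cov}^{\fock}(X)$) is sound and consistent with the paper's implicit usage.
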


\begin{proof}
Let $b \in \ker t_\ast \bigcap [\T_{\cov}^{\fock}(X)]_e$.
Our aim is to show that for $\eps >0$ there exists a finite $F_0 \subseteq G$ such that $\| \Phi_F \la_\ast(b) |_{X_F} \| < \eps$ for all finite $F \supseteq F_0$.

Recall that $\bo{K}_{\mt, \la_\ast} = (0)$ by Proposition \ref{P:Fock cov rel}.
Thus by Proposition \ref{P:ind lim} for $\eps>0$ there exist non-empty $\Bx_1, \dots, \Bx_n \in \J$ such that
\[
b' := \sum_{j = 1}^n \iota(b_{\Bx_j}) 
\; \text{ with } \; 
b_{\Bx_j} \in \bo{K}_{\Bx_j, \la_\ast} 
\; \text{ such that } \;
\nor{b - b'} < \frac{\eps}{2}.
\]
In particular, we have that
\[
\| t_\ast(b') \| = \| t_\ast(b - b') \| \leq \| b - b' \| \leq \frac{\eps}{2}.
\]
Let $F \supseteq F_0 := \bigcup_{j=1}^n F_{\Bx_j}$.
For a fixed $\xi = \sum_{r \in P}^{\oplus} \xi_r a_r \in X_F$ set $\eta := \sum_{r \in P} t(\xi_r a_r)$.
Since $t$ is injective on $A$ we have that
\[
\nor{\eta}_{Y_{t}}^2 = \| \sum_{r \in P} t(\xi_r a_r)^* t(\xi_r a_r) \| = \| \sum_{r \in P} \sca{\xi_r a_r, \xi_r a_r}_A \| = \nor{\xi}_{X_F}^2.
\]
Since $F \supseteq F_{\Bx_j}$ for all $j=1, \dots, n$, by using Proposition \ref{P:bx acts} item (b) we see that 
\begin{align*}
t_\ast( \iota(\la_\ast(b') \xi_r a_r) )
& =
t_\ast(b') t(\xi_r a_r) 
=
\sum_{j=1}^n t_\ast( \iota(b_{\Bx_j}) ) t(\xi_r a_r) 
=
\sum_{j=1}^n t(b_{\Bx_j} \xi_r a_r)
\in 
\sum_p t(X_p).
\end{align*}
Consequently $t_\ast(b') \eta$ is in $\sum_p t(X_p)$.
Thus we can apply injectivity of $t$ on $A$ and Bessel's inequality from Corollary \ref{C:Bessel} to obtain
\begin{align*}
\|\Phi_F(\la_\ast(b')) \xi\|_{X_F}
& =
\| \la_\ast(b') \xi \|_{\F X}
=
\| | \sum_{j, r} \iota(b_{\Bx_j}) \xi_r a_r |^{1/2} \|_{\F X} \\
& =
\| | \sum_{j, r} t_\ast \iota(b_{\Bx_j}) t_\ast(\xi_r a_r) |^{1/2} \|_{Y_t} 
=
\| t_\ast(b') \eta \|_{Y_{t}}
\leq
\nor{t_\ast(b')} \cdot \nor{\eta}_{Y_{t}}
\leq 
\frac{\eps}{2} \cdot \nor{\xi}_{X_F}.
\end{align*}
Hence we derive that $\|\Phi_F \la_\ast(b')|_{X_F}\| \leq \eps/2$ and thus
\[
\| \Phi_F \la_\ast (b)|_{X_F} \| \leq \| \Phi_F \la_\ast(b - b')|_{X_F} \| + \|\Phi_F \la_\ast(b')|_{X_F}\|
\leq \nor{b - b'} + \frac{\eps}{2} < \eps, 
\]
for all $F \supseteq F_0$, as required.
\end{proof}

We can now extend \cite[Lemma 3.6]{Seh18} in our setting.

\begin{proposition} \label{P:lift A to fpa}
Let $P$ be a unital subsemigroup of a discrete group $G$ and let $X$ be a product system over $P$.
A $*$-representation of $A \times_X P$ is injective on $[A \times_X P]_e$ if and only if it is injective on $A \hookrightarrow A \times_X P$.
\end{proposition}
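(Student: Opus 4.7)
The plan is to reduce the nontrivial ``if'' direction to Proposition \ref{P:fpa t}, by lifting the given $*$-representation of $A \times_X P$ to an equivariant injective Fock-covariant representation of $X$. The converse ``only if'' direction should be immediate: Proposition \ref{P:A emb} shows that the canonical copy of $A$ in $A \times_X P$ is faithful and lies inside the fixed point algebra $[A \times_X P]_e$.

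For the main direction, let $\pi$ be a $*$-representation of $A \times_X P$ that is injective on $A$. I would compose $\pi$ with the canonical chain
\[
\T(X) \longrightarrow \T_{\cov}^{\fock}(X) \stackrel{q_{\scv}}{\longrightarrow} A \times_X P
\]
provided by Proposition \ref{P:sc is fc}, and then restrict to $X$ via $\iota \circ \la$ to obtain a representation $t := \pi \circ q_{\scv} \circ \iota \circ \la$ of $X$. By construction $t$ is equivariant and Fock-covariant; since $A$ embeds faithfully into $A \times_X P$ and $\pi$ is injective on $A$, the representation $t$ is injective on $A$, and hence on every $X_p$. Proposition \ref{P:fpa t} would then apply and yield
\[
\ker t_\ast \cap [\T_{\cov}^{\fock}(X)]_e \subseteq \ker q_{\scv} \cap [\T_{\cov}^{\fock}(X)]_e,
\]
where $t_\ast = \pi \circ q_{\scv}$ as a $*$-representation of $\T_{\cov}^{\fock}(X)$.

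Next, fix $b \in [A \times_X P]_e$ with $\pi(b) = 0$. The step is to lift $b$ to some $c \in [\T_{\cov}^{\fock}(X)]_e$ with $q_{\scv}(c) = b$. Such a lift should be produced by taking any preimage $\tilde c$ of $b$ under $q_{\scv}$ and applying the canonical $e$-fibre conditional expectation on $\T_{\cov}^{\fock}(X) = \ca_{\max}(\F\C_G X)$; equivariance of $q_{\scv}$ guarantees that this expectation intertwines with the one on $A \times_X P$, so $c := E_e(\tilde c)$ satisfies $q_{\scv}(c) = E_e(b) = b$. Then $t_\ast(c) = \pi q_{\scv}(c) = \pi(b) = 0$ places $c$ inside $\ker t_\ast \cap [\T_{\cov}^{\fock}(X)]_e$, and the inclusion above forces $c \in \ker q_{\scv}$, i.e.\ $b = q_{\scv}(c) = 0$, as needed.

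The main obstacle is already absorbed into Proposition \ref{P:fpa t}, whose proof is powered by Exel's Bessel inequality together with the multiplication rules from Proposition \ref{P:bx acts}. The only subtle point in the present argument is the fibrewise surjectivity of the equivariant $*$-epimorphism $q_{\scv}$, which is standard for quotients by induced ideals and follows from the compatibility of the $e$-fibre conditional expectations on the two Fell bundle C*-algebras.
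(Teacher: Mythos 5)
Your proof is correct and follows essentially the same route as the paper: the ``only if'' direction via Proposition \ref{P:A emb}, and the ``if'' direction by viewing the representation as a Fock-covariant representation through Proposition \ref{P:sc is fc} and invoking Proposition \ref{P:fpa t}, with your conditional-expectation argument merely making explicit the fibrewise surjectivity of $q_{\scv}$ that the paper leaves implicit. (Your claim that $t$ is equivariant is not justified for an arbitrary $*$-representation $\pi$, but it is also not needed, since Proposition \ref{P:fpa t} only requires an injective Fock-covariant representation.)
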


\begin{proof}
One direction is immediate since $A \hookrightarrow [A \times_X P]_e$ by Proposition \ref{P:A emb}.
Conversely suppose that $t_\ast$ is a $*$-representation of $\T(X) = \ca(\wt{t})$ that factors through $A \times_X P$.
Without loss of generality, and due to Proposition \ref{P:sc is fc}, we can assume that $t$ is a representation of $\T_{\cov}^{\fock}(X)$ and thus Fock-covariant.
Proposition \ref{P:fpa t} shows that, if $b \in [\T_{\cov}^{\fock}(X)]_e$ is such that $t_\ast(b) = 0$, then $q_{\scv}(b) = 0$, as required.
\end{proof}

Finally, one of the important points of Sehnem's theory is that $A \times_X P$ does not depend on the group $G$ where $P$ embeds \cite[Lemma 3.9]{Seh18}.
The proof in our case is the same, given that we have the analogues of \cite[Proposition 3.5 and Lemma 3.6]{Seh18}.
We include it here as we will need to add a comment and also be explicit that this independence descends also to the reduced version $A \times_{X, \la} P$ which is not discussed in \cite{Seh18}.

\begin{proposition}\label{P:K emb}
Let $P$ be a unital subsemigroup and let $X$ be a product system over $P$.
Let $G$ be the enveloping group of $P$ and suppose that $\ga \colon G \to P$ is a group homomorphism that is injective on $P$.
If $r \in P$ and $g \in G$ satisfy $K_{\{r, g\}} \neq \mt$ and $r \notin K_{\{r, g\}}$, then $K_{\{\ga(r), \ga(g)\}} \neq \mt$ and $\ga(r) \notin K_{\{\ga(r), \ga(g)\}}$.
\end{proposition}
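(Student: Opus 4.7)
I read $\ga\colon G \to H$ (the codomain should be another group, matching the setup of Proposition \ref{P:la fpa same}), so $P$ embeds into both $G$ and $H$, and we must relate the sets $K_{\{r,g\}} = rP \cap gP$ computed inside $G$ and $K_{\{\ga(r),\ga(g)\}} = \ga(r) P \cap \ga(g) P$ computed inside $H$. The non-emptiness half is immediate: pick a witness $s = rp = gq$ with $p,q \in P$ for $K_{\{r,g\}}\neq\mt$ in $G$; applying the group homomorphism $\ga$ yields $\ga(s) = \ga(r)\ga(p) = \ga(g)\ga(q)$, which by $\ga(p),\ga(q) \in \ga(P) \subseteq P$ (inside $H$) lies in $\ga(r) P \cap \ga(g) P = K_{\{\ga(r), \ga(g)\}}$.

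For the non-containment half I would argue by contradiction: assume $\ga(r) \in K_{\{\ga(r),\ga(g)\}}$, so $\ga(r) = \ga(g)\ga(t)$ for some $t \in P$. Combining this with the witness $\ga(r)\ga(p) = \ga(g)\ga(q)$ from the non-emptiness step gives $\ga(g)\ga(t)\ga(p) = \ga(g)\ga(q)$ in $H$, hence $\ga(tp) = \ga(q)$. Since $tp, q \in P$, injectivity of $\ga$ on $P$ yields $tp = q$. Plugging back into the witness relation in $G$, we obtain $rp = gq = g(tp) = (gt)p$, and right-cancellation in the group $G$ forces $r = gt \in gP$. This places $r$ in $rP \cap gP = K_{\{r,g\}}$, contradicting the standing hypothesis $r \notin K_{\{r,g\}}$.

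The only delicate point — and the one that might look like an obstacle — is that $\ga$ need not be injective on all of $G$, so one cannot directly transfer the identity $\ga(r) = \ga(gt)$ from $H$ back to $G$. The plan above circumvents this by never trying to invert $\ga$ on an element of $G\setminus P$; instead, the witness $s \in K_{\{r,g\}}$ is used to multiply through by $p$ on the right, bringing the equation into the regime $tp, q \in P$ where injectivity of $\ga|_P$ applies, after which the desired equality $r = gt$ is produced inside the group $G$ by plain cancellation. This is precisely the ingredient Sehnem uses in \cite[Lemma 3.9]{Seh18}, and it goes through unchanged in the present generality since it is purely a semigroup/group-theoretic statement and does not involve the fibres $X_p$.
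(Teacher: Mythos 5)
Your proof is correct and follows essentially the same route as the paper: both arguments take a witness $rq_1 = gq_2$ for $K_{\{r,g\}} \neq \mt$, push the hypothetical relation $\ga(r) \in \ga(g)P$ through that witness so that the resulting identity lives entirely in $P$, apply injectivity of $\ga|_P$ there, and then cancel in $G$ to contradict $r \notin gP$ (equivalently, $q_2q_1^{-1} \notin P$, which is how the paper phrases the same contradiction). You also correctly flag the typo in the statement, where the codomain of $\ga$ should be a group $H$ rather than $P$.
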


\begin{proof}
It is clear that with these assumptions we have that $K_{\{\ga(r), \ga(g)\}} \neq \mt$ and also there are $q_1, q_2 \in P$ such that $r q_1 = g q_2$ with $q_2 q_1^{-1} \notin P$.
If it were that $\ga(r) \in K_{\{\ga(r), \ga(g)\}}$ then $\ga(r) = \ga(r q_1 q_2^{-1} s)$ for some $s \in P$, and thus that $\ga(q_2) = \ga(s q_1)$.
As $\ga|_P$ is one-to-one we get the contradiction $q_2 q_1^{-1} = s \in P$.
\end{proof}

Although the details of this step are omitted from the proof of \cite[Lemma 3.9]{Seh18}, we find that they elucidate the independence from the group.
Practically the elements of the group to be considered come from $P q_1 q_2^{-1}$ with $q_1 q_2^{-1} \notin P^{-1}$, and this subset of $P \cdot P^{-1}$ cannot give non-trivial relations in a group generated by $P$.

\begin{proposition}\label{P:ind sc}
Let $P$ be a unital subsemigroup and let $X$ be a product system over $P$.
Suppose that $P$ admits two group embeddings $i_G \colon P \to G$ and $i_H \colon P \to H$.
Then there exists a $*$-isomorphism
\[
\ca_{\max}(\S\C_G X) \longrightarrow \ca_{\max}(\S\C_H X); \xi_{i_G(p)} \mapsto \xi_{i_H(p)},
\]
which descends to $*$-isomorphism
\[
\ca_{\la}(\S\C_G X) \longrightarrow \ca_{\la}(\S\C_H X); \xi_{i_G(p)} \mapsto \xi_{i_H(p)}.
\]
In particular, there exists a Fell bundle isomorphism $\S\C_G X \to \S\C_H X$ such that
\[
[A \times_X P]_{i_G(p_1)^{-1} i_G(q_1) \cdots i_{G}(p_n)^{-1} i_G(q_n)} \longrightarrow [A \times_X P]_{i_H(p_1)^{-1} i_H(q_1) \cdots i_{H}(p_n)^{-1} i_H(q_n)},
\]
isometrically for every $p_1, q_1, \dots, p_n, q_n \in P$.
\end{proposition}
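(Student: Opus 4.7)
The plan is to adapt the strategy used for Proposition~\ref{P:la fpa same} and its corollary, now at the level of the strong covariance ideals. By transitivity, it suffices to take $G$ to be the enveloping group $G_0$ of $P$ and an arbitrary group embedding $i_H \colon P \to H$ with $\ga \colon G_0 \to H$ the induced epimorphism, which is injective on $P$ and therefore, by a short direct check (if $\ga(p_1 q_2) = \ga(p_2 q_1)$ in $P$ forces $p_1 q_2 = p_2 q_1$ in $G_0$, hence $p_1 q_1^{-1} = p_2 q_2^{-1}$ there), also on $P \cdot P^{-1}$. The goal is then to show that the association $\xi_{i_{G_0}(p)} \mapsto \xi_{i_H(p)}$ induces a $*$-isomorphism between the two maximal C*-algebras; the Fell bundle isomorphism and its descent to the reduced level will follow.

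First, each of the two defining quotients of $\T(X)$ factors through the canonical $*$-epimorphism $\T(X) \to \T_{\cov}^{\fock}(X)$ by Proposition~\ref{P:sc is fc}, and $\T_{\cov}^{\fock}(X)$ is independent of the group embedding by the corollary following Proposition~\ref{P:la fpa same}. The problem thus reduces to showing that the two induced ideals inside $\T_{\cov}^{\fock}(X)$ --- encoding strong covariance for $G_0$ and for $H$ respectively --- coincide. Being induced ideals, equality on the common $e$-fiber $[\T_{\cov}^{\fock}(X)]_{e}$ suffices.

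The key input is Proposition~\ref{P:K emb}: the ideal $I_{r^{-1}(r \vee F)}$ that controls the augmented Fock space $X_F$ depends only on the data $\{K_{\{r,g\}} \subseteq P : g \in F\}$. Elements $g$ outside $P \cdot P^{-1}$ force $K_{\{r,g\}} = \mt$ and are inert, as are $g$'s with $r \in K_{\{r,g\}}$. For the remaining relevant $g$'s, Proposition~\ref{P:K emb} combined with the injectivity of $\ga$ on $P \cdot P^{-1}$ yields that $K_{\{r,g\}}$ computed in $G_0$ equals $K_{\{\ga(r), \ga(g)\}}$ computed in $H$ as subsets of $P$, so $I_{r^{-1}(r \vee F)}$ is intrinsic to $P$. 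Consequently $\ga$ places the relevant finite subsets of $G_0$ in bijection with those of $H$, and the augmented Fock representations $\Phi_F|_{X_F}$ produce identical corona-vanishing conditions. The resulting ideals $\I_{\scv, e}$ inside $[\T(X)]_{e}$ agree, yielding equal strong covariance ideals in $\T_{\cov}^{\fock}(X)$.

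The main obstacle lies in tracking the correspondence between the directed families of finite subsets of $G_0$ and $H$ so that the same ideals are generated --- without the careful injectivity analysis one only obtains one-sided containments. This is the technical content tacitly used in \cite{Seh18} and made explicit via Proposition~\ref{P:K emb}, exploiting that relevant group elements arise as $p q_1 q_2^{-1}$ with $q_1 q_2^{-1} \notin P^{-1}$ and inject under any group homomorphism injective on $P$. Once the maximal C*-algebras are identified, the matching of spectral subspaces is automatic and gives the Fell bundle isomorphism $\S\C_{G_0} X \to \S\C_H X$, which descends to an isomorphism of the reduced C*-algebras by functoriality of the reduced C*-algebra construction for Fell bundles, completing the proof.
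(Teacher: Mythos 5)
There is a genuine gap, and it occurs at the exact point where the paper's proof changes strategy. Your plan is to show that the two strong covariance ideals coincide outright, by arguing that the ideals $I_{r^{-1}(r\vee F)}$ are ``intrinsic to $P$'' and that $\ga$ puts the relevant finite subsets of $G$ and $H$ in bijection. Two problems. First, your supporting claim that $\ga$ is injective on $P\cdot P^{-1}$ rests on the equivalence ``$\ga(p_1q_2)=\ga(p_2q_1)$ forces $p_1q_1^{-1}=p_2q_2^{-1}$,'' but $p_1q_1^{-1}=p_2q_2^{-1}$ is equivalent to $p_2^{-1}p_1=q_2^{-1}q_1$, not to $p_1q_2=p_2q_1$, except in the abelian case; injectivity of $\ga$ on $P$ does not give injectivity on $P\cdot P^{-1}$. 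Second, and more seriously, even with Proposition \ref{P:K emb} one only obtains the one-sided comparison $X_{\ga(F)}\subseteq X_F$ (the sets $K_{\{\ga(r),\ga(g)\}}$ computed in $H$ can a priori be larger than $\ga(K_{\{r,g\}})$, and distinct elements of $G$ can collapse in $H$), so the $c_0$-conditions over the two directed families of finite sets yield only the containment $\I_{\scv,e_G}\subseteq \I_{\scv,e_H}$. You acknowledge that ``without the careful injectivity analysis one only obtains one-sided containments,'' but the analysis you invoke does not close the gap, and the reverse containment is never actually established. At that stage you have a $*$-epimorphism $\ca_{\max}(\S\C_G X)\to\ca_{\max}(\S\C_H X)$, not an isomorphism.

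The paper does not attempt to prove equality of the ideals. It takes the one-sided containment, obtains the $*$-epimorphism $\Phi_\ga$ fixing each $X_p$, descends it to $\dot\Phi_\ga\colon \ca_\la(\S\C_G X)\to\ca_\la(\S\C_H X)$, and then proves $\dot\Phi_\ga$ is faithful by an entirely different mechanism: $\dot\Phi_\ga|_A$ is injective by Proposition \ref{P:A emb}, hence $\dot\Phi_\ga$ is injective on the fixed point algebra by Proposition \ref{P:lift A to fpa}, and since it intertwines the faithful conditional expectations it is injective everywhere. The Fell bundle isomorphism, and with it the isomorphism of the maximal C*-algebras, is then read off from the faithful reduced map. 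This final step is the essential content of the proposition and is missing from your proposal; you would need to add it (or supply a genuine proof of the reverse ideal containment, which the paper deliberately avoids).
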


\begin{proof}
Let $G$ be the enveloping group of $P$ and $\ga \colon G \to H$ be the canonical epimorphism that fixes $P$.
Let us denote by $\I_{\scv, e_G}$ and by $\I_{\scv, e_H}$ the strong covariance relations that define the $G$-strongly covariant and the $H$-strongly covariant ideals.
We will show that $\I_{\scv, e_G} \subseteq \I_{\scv, e_H}$.
Recall that these ideals are kernels of the maps
\[
[\T(X)]_{e_G} \longrightarrow [\T_\la(X)]_{e_G} \longrightarrow \quo{\prod\limits_{\textup{fin } F \subseteq G} \L(X_F)}{c_0(\L(X_F) \mid \textup{fin } F \subseteq G)},
\]
and
\[
[\T(X)]_{e_H} \longrightarrow [\T_\la(X)]_{e_H} \longrightarrow \quo{\prod\limits_{\textup{fin } \ga(F) \subseteq H} \L(X_{\ga(F)})}{c_0(\L(X_{\ga(F)}) \mid \textup{fin } \ga(F) \subseteq H)}.
\]
By Proposition \ref{P:la fpa same} we have that $[\T_\la(X)]_{e_G} = [\T_\la(X)]_{e_H}$, and thus it suffices to show that $X_{\ga(F)} \subseteq X_F$ for every finite $F \subseteq G$.
Towards this end it suffices to show that
\[
I_{\ga(r)^{-1} K_{\{\ga(r), \ga(g)\}}} \subseteq I_{r^{-1} K_{\{r,g\}}} \foral r \in P, g \in G.
\]
If $K_{\{r,g\}} = \mt$, or if $r \in K_{\{r,g\}}$, then $I_{r^{-1} K_{\{r,g\}}} = A \supseteq I_{\ga(r)^{-1} K_{\{\ga(r), \ga(g)\}}}$.
If $K_{\{r,g\}} \neq \mt$ and $r \notin K_{\{r,g\}}$ then Proposition \ref{P:K emb} yields $K_{\{\ga(r), \ga(g)\}} \neq \mt$ and $\ga(r) \notin K_{\{\ga(r), \ga(g)\}}$.
Since $K_{\{r,g\}} \subseteq P$ and $K_{\{\ga(r), \ga(g)\}} \subseteq \ga(P)$ then $\ga$ defines a bijection between these two sets.
Thus we have
\begin{align*}
I_{\ga(r)^{-1} K_{\{\ga(r), \ga(g)\}}}
& =
\bigcap_{\ga(s) \in K_{\{\ga(r), \ga(g)\}}} \ker \phi_{\ga(r)^{-1} \ga(s)} 
=
\bigcap_{s \in K_{\{r, g\}}} \ker \phi_{r^{-1} s} 
= 
I_{r^{-1} K_{\{r,g\}}}.
\end{align*}

Therefore, there exists a $*$-epimorphism
\[
\Phi_\ga \colon \ca_{\max}(\S\C_G X) \longrightarrow \ca_{\max}(\S\C_H X)
\]
that fixes every $X_p$, and thus $\Phi_\ga ([\S\C_G X]_g) \subseteq [\S\C_H X]_{\ga(g)}$.
Hence it descends to a $*$-epimorphism on the reduced C*-algebras
\[
\dot{\Phi}_\ga \colon \ca_{\la}(\S\C_G X) \longrightarrow \ca_{\la}(\S\C_H G).
\]
By Proposition \ref{P:A emb} $\dot{\Phi}_\ga|_A$ is faithful, and Proposition \ref{P:lift A to fpa} yields that $\dot{\Phi}_\ga$ is faithful on $[\S \C_G X]_e$.
By construction $\dot{\Phi}_\ga$ intertwines the faithful conditional expectations, and so $\dot{\Phi}_\ga$ is faithful.
\end{proof}

\section{Co-universality}

\subsection{Boundary represenations}

Our goal is to show that the reduced strongly covariant representation is terminal for the class of equivariant injective covariant represenations (and thus also for the corresponding Fock-covariant).
The notion of terminal representations appears in several contexts and constructions of C*-algebras, sometimes under disguise.
Let us set a general framework and provide some examples.

\begin{definition}
Let $\F$ be a class of $*$-representations of a C*-algebra.
We say that $\Phi$ is \emph{$\F$-boundary} if and only if $\Phi$ is a member of the class $\F$, and $\Phi$ factors through every member $\Psi$ of the class $\F$.
\end{definition}

Boundary representations generate unique \emph{co-universal C*-algebras}.
We will limit the amount of detail in the following examples.
The interested reader may refer to the cited works for more information.

\begin{example}
Let $A, B$ be C*-algebras.
If $\nor{\cdot}_{\ga}$ is a C*-norm on the algebraic tensor product $A \odot B$, then we write $A \otimes_\ga B$ for the $\nor{\cdot}_{\ga}$-completion of $A \odot B$.
By considering ``all'' the injective $*$-algebraic representations of $A \odot B$ on Hilbert spaces, we obtain the maximal C*-norm on $A \odot B$ that dominates all other C*-norms.
Then the spatial tensor product norm is boundary for the $*$-representations of $A \otimes_{\max} B$ that are injective on $A \odot B$.
\end{example}

\begin{example}
Let $I \lhd A$ be a two-sided closed ideal in a C*-algebra $A$ and consider the short exact sequence
\[
\ses{I}{}{A}{}{A/I}.
\]
A short exact sequence $*$-homomorphism of $I \lhd A$ corresponds to a pair $I' \lhd A'$ that makes the following diagram
\[
\xymatrix{
0 \ar[r] & I \ar[r] \ar[d] & A \ar[r] \ar[d] & A/I \ar[r] \ar[d] & 0 \\
0 \ar[r] & I' \ar[r] & A' \ar[r] & A'/I' \ar[r] & 0
}
\]
commutative.
The maximal tensor product is exact in the sense that it gives a short exact sequence
\[
\ses{I \otimes_{\max} B}{}{A \otimes_{\max} B}{}{A/I \otimes_{\max} B}
\]
for all C*-algebras $B$.
If $A/I$ is nuclear or if $B$ is exact, then the pair $I \otimes B \lhd A \otimes B$ induces a boundary representation for the short exact sequence $*$-homomorphisms of $I \otimes_{\max} B \lhd A \otimes_{\max} B$ that are injective on $A \odot B$.
\end{example}

\begin{example}
Let $\fA$ be an operator algebra.
Let $\F_{\fA}$ be the class of the $*$-representations of $\ca_{\max}(\fA)$ that are completely isometric on $\fA$.
Then the canonical $*$-representation $\ca_{\max}(\fA) \to \cenv(\fA)$ that fixes $\fA$ is an $\F_{\fA}$-boundary representation.
This follows from the existence of the C*-envelope by Hamana \cite{Ham79}.
A similar result holds for operator systems, i.e., unital selfadjoint subspaces of some $\B(H)$.
The existence of the maximal C*-algebra of an operator system is shown by Kirchberg--Wasserman \cite{KW98}, and the existence of the C*-envelope of an operator system is shown by Arveson \cite{Arv08}, simplifying the arguments of Dritschel-McCullough \cite{DM05}.
For non-unital operator systems the reader may refer to \cite{KKM21}.
\end{example}

\begin{example}
Let $\fA$ be an operator algebra and $\al$ be an action by a locally compact Hausdorff group $G$ on $\fA$ by completely isometric automorphisms.
Then $\al$ extends to $\ca_{\max}(\fA)$ and to $\cenv(\fA)$.
Let $\F_{\fA}^{\al}$ be the class of the $*$-representations of $\ca_{\max}(\fA)$ that are completely isometric on $\fA$ and admit an equivariant action of $G$.
Then the $*$-representation $\ca_{\max}(\fA) \to \cenv(\fA)$ that fixes $\fA$ is an $\F_{\fA}^{\al}$-boundary representation.
This follows by the C*-envelope property and the automatic extension of $\al$ on $\cenv(\fA)$.
\end{example}

\begin{example}
Let $\fA$ be an operator algebra and $\de$ be a coaction of a discrete group $G$ on $\fA$.
Then it induces a coaction on $\ca_{\max}(\fA)$.
Let $\F_{\fA}^{\de}$ be the class of the $*$-representations of $\ca_{\max}(\fA)$ that are completely isometric on $\fA$ and admit an equivariant coaction of $G$.
In \cite{DKKLL20} it is shown that the $*$-representation $\ca_{\max}(\fA) \to \cenv(\fA) \otimes \ca_{\max}(G)$ that fixes $\fA$ is an $\F_{\fA}^{\de}$-boundary representation.
\end{example}

\begin{example}
Let $\B = \{\B_g\}_{g \in G}$ be a Fell bundle over a discrete group $G$.
Let $\F_\B^\de$ be the class of representations of $\ca_{\max}(\B)$ that are injective on $\B_e$ and admit an equivariant coaction of $G$.
Then the left regular representation $\la \colon \ca_{\max}(\B) \to \ca_\la(\B)$ is $\F_\B^\de$-boundary.
This is the co-universal result of Exel \cite{Exe97}.
Fell bundles encode a great number of examples such as C*-crossed products, partial crossed products and Hilbert bimodules.
\end{example}

\begin{example}
Let $P$ be a subsemigroup of a discrete group of $G$.
Let $\F_s^\de$ be the class of constructible non-zero representations of $\ca_s(P)$ in the sense of Li \cite{Li12} that admit an equivariant coaction of $G$.
In \cite{KKLL21} it is shown that the canonical $*$-representation $\ca_s(P) \to \mathrm{C}(\partial \Om_P) \rtimes_{r} G$ that fixes $P$ is an $\F_s^\de$-boundary representation.
Here $\partial \Om_P$ is the smallest $G$-invariant subspace in the spectrum of $[\ca_s(P)]_e$.
Moreover, it is shown that $\mathrm{C}(\partial \Om_P) \rtimes_{r} G \simeq \cenv(\A(P), G, \de)$.
\end{example}

The following four examples concern successively stronger results, but each one with an essentially different approach.

\begin{example}
Let $X$ be a single C*-correspondence.
We write $\T(X)$ for the universal C*-algebra with respect to the representations of $X$ and $\O(X)$ for the universal C*-algebra with respect to the covariant representations of $X$ in the sense of Katsura \cite{Kat04}.
Let $\F_{X}^{\bT}$ be the class of representations of $\T(X)$ that are injective on $X$ and admit an equivariant action of $\bT$.
Then the Cuntz--Pimsner representation $\T(X) \to \O(X)$ is an $\F_X^{\bT}$-boundary representation.
This follows from the study of the gauge-invariant ideals of $\T(X)$ by Katsura \cite{Kat07}.
This class contains (topological) graph C*-algebras and C*-algebras of $\bZ$-dynamics. 
\end{example}

\begin{example}
Let $(G,P)$ be a quasi-lattice.
Let $X$ be a compactly aligned product system over $P$ in the sense of Fowler \cite{Fow02}.
We write $\N\T(X)$ for the universal C*-algebra with respect to the Nica-covariant representations of $X$ and $\N\O(X)$ for the universal C*-algebra with respect to the Cuntz--Nica--Pimsner representations of $X$ in the sense of Sims--Yeend \cite{SY10}.
Then $\N\O(X)$ is equivariant and let $\N\O(X)^r$ be the reduced C*-algebra of the bundle induced by this coaction.
Let $\F_{\N X}^{\de}$ be the class of representations of $\N\T(X)$ that are injective on $X$ and admit an equivariant coaction of $G$.
Carlsen--Larsen--Sims--Vittadello \cite{CLSV11} show that the Cuntz--Nica--Pimsner representation $\N\T(X) \to \N\O(X)^r$ is an $\F_{\N X}^{\de}$-boundary representation, when $X$ is injective, or when $P$ is directed and $X$ is $\wt{\phi}$-injective.
This follows by studying the structure of the augmented Fock space representation of Sims--Yeend \cite{SY10}.
This class contains the case $P = \bZ_+^d$, e.g., C*-algebras from higher rank (topological) graphs.
\end{example}

\begin{example}
Let $(G,P)$ be an abelian lattice.
Let $X$ be a compactly aligned product system over $P$ in the sense of Fowler \cite{Fow02}.
We write $\N\T(X)$ for the universal C*-algebra with respect to the Nica-covariant representations of $X$.
Let $\F_{\N X}^{\wh{G}}$ be the class of representations of $\N\T(X)$ that are injective on $X$ and admit an equivariant action of the dual group $\wh{G}$.
In \cite{DK20} it is shown that the representation $\N\T(X) \to \cenv(\T_\la(X)^+)$ is an $\F_{\N X}^{\wh{G}}$-boundary representation.
Furthermore, it is shown that $\cenv(\T_\la(X)^+)$ coincides with $A \times_X P$.
The latter follows by the co-universal properties of $A \times_X P$ and $\cenv(\T_\la(X)^+)$.
\end{example}

\begin{example}
Let $P$ be a right LCM semigroup that is a subsemigroup of a discrete group $G$.
Let $X$ be a compactly aligned product system over $P$ in the sense of Kwa\'{s}niewski--Larsen \cite{KL19b}.
We write $\N\T(X)$ for the universal C*-algebra with respect to the Nica-covariant representations of $X$.
Let $\F_{\N X}^\de$ be the class of representations of $\N\T(X)$ that are injective on $X$ and admit an equivariant coaction of $G$.
In \cite{DKKLL20} it is shown that the representation $\N\T(X) \to \cenv(\T_\la(X)^+, G, \de)$ that fixes $X$ is an $\F_{\N X}^\de$-boundary representation.
Furthermore, it is shown that $\cenv(\T_\la(X)^+, G, \de)$ coincides with $A \times_{X, \la} P$. 
These representations differ in general from the augmented Fock representation of Sims--Yeend \cite{SY10} and $\N\O(X)^r$ of \cite{CLSV11}; see for example \cite[Example 3.9]{CLSV11}.
\end{example}

The following example shows that it is not straightforward to develop weak*-analogues of boundary representations.

\begin{remark}
In analogy to the C*-case, Wiersma \cite{Wie16} introduced a weak* tensor product theory for von Neumann algebras.
The corresponding tensor products for von Neumann algebras $M$ and $N$ are taken with respect to injective $*$-representations of $M \odot N$ that are normal on $M$ and $N$.
In \cite{Wie16} it is shown that $(M \otimes_{\max} N)^{**}$ defines a maximal weak* tensor product $\ol{\otimes}_{\textup{w*-}\max}$.
However, the collection of the normal representations of $M \ol{\otimes}_{\textup{w*-}\max} N$ that are injective on $M \odot N$ may not have a boundary element.
An example is given in \cite[Remark 2.6]{Wie16} by considering $N = M'$ for a factor $M$.
Then the multiplication map $m \colon M \odot M' \to \B(H)$ defines a weak* tensor product $\ol{\otimes}_m$, but $M \ol{\otimes}_m M'$ cannot be a normal quotient of $M \ol{\otimes} M'$ when $M$ is not of Type I.
Nevertheless, both $M \ol{\otimes}_m M'$ and $M \ol{\otimes} M'$ give minimal weak*-tensor products being factors.
\end{remark}

\subsection{Co-universality for product systems through the C*-envelope}

We will now show how the C*-envelope of the cosystem on the Fock tensor algebra can be used to answer the co-universality problem for covariant representations.
This first proof we give follows the strategy of \cite{DK18, DKKLL20} through the completely contractive maps of the Fock tensor algebra and an induced coaction by $\ca_\la(P)$.

Recall that Fell's absorption principle for groups requires an intertwining unitary.
We are going to establish a similar result for product systems (actually half of it).
In the case of $X_p = \bC$ for all $p \in P$ we recover \cite[Lemma 5.3]{CD21}.

\begin{proposition}\label{P:inter W}
Let $P$ be a unital subsemigroup of a discrete group $G$ and let $X$ be a product system over $P$.
If $t$ is a representation of $X$, then the isometry
\[
W \colon \F X \otimes_A H \longrightarrow H \otimes \ell^2(P); \xi_q \otimes h \mapsto t(\xi_q)h \otimes \de_q,
\] 
satisfies
\[
W \la(\xi_p) \otimes I_H = (t(\xi_p) \otimes V_p) W \foral p \in P.
\]
\end{proposition}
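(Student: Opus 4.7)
The plan is to verify two things: first, that $W$ is well-defined and isometric on the interior tensor product $\F X \otimes_A H$; and second, that the intertwining relation holds on simple tensors, after which it extends by linearity and continuity.

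For the first step, I would compute the inner product on simple tensors in $\F X \otimes_A H$ via the balancing with the $A$-valued inner product:
\[
\langle \xi_q \otimes h, \xi_{q'} \otimes h' \rangle = \langle h, t(\langle \xi_q, \xi_{q'} \rangle_{\F X}) h' \rangle_H.
\]
Because distinct fibres $X_q, X_{q'}$ sit in orthogonal summands of $\F X$, this is zero unless $q = q'$, in which case it equals $\langle h, t(\xi_q^* \xi_{q'}) h' \rangle_H$. On the target side, $\langle t(\xi_q) h \otimes \de_q, t(\xi_{q'}) h' \otimes \de_{q'} \rangle$ vanishes for $q \neq q'$, and for $q = q'$ equals $\langle h, t(\xi_q)^* t(\xi_{q'}) h' \rangle_H = \langle h, t(\xi_q^* \xi_{q'}) h' \rangle_H$ by the Toeplitz representation property (iii) with $p = q$ and $q = e$. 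Hence $W$ preserves inner products on the algebraic tensor product, and extends to an isometry.

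For the second step, I would check the identity on a simple tensor $\xi_q \otimes h$. On the left-hand side, using that the Fock representation satisfies $\la(\xi_p) \xi_q = \xi_p \cdot \xi_q \in X_{pq}$ and the multiplicativity of $t$ (property (ii) of Definition \ref{D:Toeplitz}):
\[
W(\la(\xi_p) \otimes I_H)(\xi_q \otimes h) = W((\xi_p \cdot \xi_q) \otimes h) = t(\xi_p \xi_q) h \otimes \de_{pq} = t(\xi_p) t(\xi_q) h \otimes \de_{pq}.
\]
On the right-hand side, using $V_p \de_q = \de_{pq}$:
\[
(t(\xi_p) \otimes V_p) W (\xi_q \otimes h) = (t(\xi_p) \otimes V_p)(t(\xi_q) h \otimes \de_q) = t(\xi_p) t(\xi_q) h \otimes \de_{pq}.
\]
Both expressions agree, and extending by linearity and continuity finishes the verification.

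The argument is essentially a direct computation; there is no real obstacle beyond being careful that the inner product on $\F X \otimes_A H$ is evaluated via the $A$-valued inner product and that orthogonality of distinct fibres in $\F X$ matches orthogonality of $\{\de_q\}_{q \in P}$ in $\ell^2(P)$. The only slightly delicate point is observing that property (iii) of the Toeplitz representation (applied with $q \to e$) gives $t(\xi_p)^* t(\xi_p) = t(\xi_p^* \xi_p) = t(\langle \xi_p, \xi_p \rangle)$, which is exactly what is needed to make $W$ isometric.
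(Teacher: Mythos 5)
Your proof is correct and follows essentially the same route as the paper: the isometry is verified by matching inner products on simple tensors (using orthogonality of the fibres of $\F X$ against orthogonality of $\{\de_q\}$ and property (iii) of Definition \ref{D:Toeplitz} with $q \to e$), and the intertwining identity is checked by the same direct computation on $\xi_q \otimes h$. If anything, your write-up is slightly cleaner in that it makes explicit that no injectivity of $t$ is needed, since the interior tensor product $\F X \otimes_A H$ is already formed by quotienting out null vectors.
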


\begin{proof}
A straightforward computation for $\xi_p, \eta_q \in \F X$ and injectivity of $t$ gives that
\[
\sca{ W \xi_p \otimes h_p, W \xi_q \otimes h_q } 
= 
\de_{p,q} \sca{h_p, t(\xi_p)^* t(\xi_q) h_q}
=
\sca{\xi_p \otimes h_p, \xi_q \otimes h_q}_{\F X \otimes_A H},
\]
Thus $W$ is a well-defined isometry.
We then compute
\[
W (\la(\xi_p) \otimes I_H) (\xi_q \otimes h) = t(\xi_p \xi_q)h \otimes \de_{pq} = (t(\xi_p) \otimes V_p) W (\xi_q \otimes h)
\]
as required, and the proof is complete.
\end{proof}

\begin{proposition}\label{P:cc hom nsa}
Let $P$ be a unital subsemigroup of a discrete group $G$ and let $X$ be a product system over $P$.
If $t$ is an equivariant representation of $X$ that is injective on $A$, then there exists a completely contractive homomorphism
\[
\ol{\alg}\{t(X_p) \mid p \in P\} \longrightarrow \L( \F X ); t(\xi_p) \mapsto \la(\xi_p).
\] 
\end{proposition}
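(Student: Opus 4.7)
The plan is to build the required completely contractive homomorphism as a composition of four maps, combining the intertwining isometry $W$ of Proposition \ref{P:inter W} with a Fell-absorption-type argument driven by the equivariance of $t$. First, equivariance supplies a coaction $\de_t \colon \ca(t) \to \ca(t) \otimes \ca_{\max}(G)$ with $\de_t(t(\xi_p)) = t(\xi_p) \otimes u_p$. Composing with $\id \otimes \la$ yields a $*$-homomorphism $\psi \colon \ca(t) \to \ca(t) \otimes \ca_\la(G) \subseteq \B(H \otimes \ell^2(G))$ sending $t(\xi_p)$ to $t(\xi_p) \otimes \la_p$, which is automatically completely contractive. Second, compress by $I_H \otimes P_{\ell^2(P)}$; since $P_{\ell^2(P)} \la_p P_{\ell^2(P)} = V_p$ for $p \in P$, this completely contractive map sends $t(\xi_p) \otimes \la_p$ to $t(\xi_p) \otimes V_p$, and it restricts to a \emph{homomorphism} on $\ol{\alg}\{t(\xi_p) \otimes \la_p \mid p \in P\}$ since the product $t(\xi_{p_1})\cdots t(\xi_{p_n}) \otimes \la_{p_1 \cdots p_n}$ compresses to $t(\xi_{p_1})\cdots t(\xi_{p_n}) \otimes V_{p_1 \cdots p_n} = (t(\xi_{p_1}) \otimes V_{p_1}) \cdots (t(\xi_{p_n}) \otimes V_{p_n})$.

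Third, I would apply the compression $T \mapsto W^* T W$, which is completely contractive since $W$ is an isometry. Proposition \ref{P:inter W} gives $W(\la(\xi_p) \otimes_A I_H) = (t(\xi_p) \otimes V_p) W$, and iterating this formula shows that $W$ intertwines any noncommutative polynomial in $\{\la(\xi_p) \otimes_A I_H \mid p \in P\}$ with the corresponding polynomial in $\{t(\xi_p) \otimes V_p \mid p \in P\}$, so that $W^*(\cdot)W$ is multiplicative on $\ol{\alg}\{t(\xi_p) \otimes V_p \mid p \in P\}$ with image $\ol{\alg}\{\la(\xi_p) \otimes_A I_H \mid p \in P\}$. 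Fourth, since $t$ is injective on $A$, the induced $*$-representation $\pi \colon \L(\F X) \to \B(\F X \otimes_A H)$, $T \mapsto T \otimes_A I_H$, is faithful, and hence completely isometric; its inverse on the image identifies $\la(\xi_p) \otimes_A I_H$ with $\la(\xi_p) \in \L(\F X)$. Composing the four steps gives the required completely contractive homomorphism $t(\xi_p) \mapsto \la(\xi_p)$.

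The main technical point is verifying that the two non-$*$-homomorphic steps---the compressions by $I_H \otimes P_{\ell^2(P)}$ and by $W$---remain multiplicative when restricted to the relevant generated subalgebras. This reduces to the semigroup law $V_p V_q = V_{pq}$ for $p, q \in P$ and the iterated intertwining identity for $W$, both of which are routine but require careful bookkeeping. The role of equivariance is precisely to promote the formal assignment $t(\xi_p) \mapsto t(\xi_p) \otimes \la_p$ from a map of generators to a genuine $*$-homomorphism of $\ca(t)$, supplying the norm control on the nonselfadjoint side that is otherwise unavailable.
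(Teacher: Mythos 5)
Your proof is correct and follows essentially the same route as the paper: both factor the map as $(\id\otimes\phi)\de_t$ (your steps 1--2, with $\phi = P_{\ell^2(P)}\la(\cdot)|_{\ell^2(P)}$) followed by the compression along the intertwining isometry $W$ of Proposition \ref{P:inter W} and the inverse of the faithful $*$-homomorphism $\otimes\,\id_H$ (your steps 3--4, which are the paper's map $\Phi$). The only cosmetic difference is that you conjugate by $W^*(\cdot)W$ where the paper restricts to the invariant range projection $WW^*$; these give the same homomorphism.
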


\begin{proof}
First we construct a completely contractive map
\[
\Phi \colon \ol{\alg}\{t(X_p) \otimes V_p \mid p \in P\} \longrightarrow \L( \F X); t(\xi_p) \otimes V_p \mapsto \la(\xi_p).
\]
Let $W$ be the intertwining isometry from Proposition \ref{P:inter W} with final space densely spanned by $t(\xi_q)h \otimes \de_q$.
Hence $WW^*$ is an invariant subspace for $\alg\{t(X_p) \otimes V_p \mid p \in P\}$.
Moreover, Proposition \ref{P:inter W} yields
\[
(t(\xi_p) \otimes V_p) |_{W W^*} = W \left( \la(\xi_p) \otimes I_H \right) W^*,
\]
and injectivity of $t$ on $A$ implies that the map
\[
\otimes \id_H \colon \L( \F X ) \longrightarrow \L( \F X \otimes_A H); S \mapsto S \otimes \id_H
\]
is a faithful $*$-homomorphism \cite[p. 42]{Lan95}.
Thus we can define the following completely contractive homomorphism
\[
\Phi \colon \alg\{t(X_p) \otimes V_p \mid p \in P\} \stackrel{|_{WW^*}}{\longrightarrow}
\alg\{W (\la(X_p) \otimes I_H) W^* \mid p \in P\} \stackrel{\simeq}{\longrightarrow}
\T_\la(X)^+ \otimes I_H \simeq \T_\la(X)^+,
\]
so that $\Phi(t(\xi_p) \otimes V_p) = \la(\xi_p)$.

Next consider the unital completely positive map
\[
\phi \colon \ca_{\max}(G) \longrightarrow \ca_\la(G) \longrightarrow \B(\ell^2(P)) ; u_g \mapsto \la_g \mapsto P_{\ell^2(P)} \la_g |_{\ell^2(P)},
\]
which is multiplicative on the subalgebra of $\ca_{\max}(G)$ generated by the $u_p$ for $p \in P$, with $\phi(u_p) = V_p$.
Then the map $\Phi (\id \otimes \phi) \de_t$ for the coaction $\de_t$ of $t$ satisfies
\[
\Phi (\id \otimes \phi) \de_t t(\xi_p) = \Phi (\id \otimes \phi) (t(\xi_p) \otimes u_p) = \Phi(t(\xi_p) \otimes V_p) = \la(\xi_p),
\]
and gives the required completely contractive homomorphism.
\end{proof}

As a consequence, we obtain a co-action by $\ca_\la(P)$ on the Fock tensor algebra where the target space can be over any equivariant injective representation of $\T_\la(X)$.
The following is the analogue of the key result \cite[Proposition 4.5]{DKKLL20}.

\begin{proposition}\label{P:cocover nsa}
Let $P$ be a unital subsemigroup of a discrete group $G$ and let $X$ be a product system over $P$.
If $\Psi$ is an equivariant $*$-representation of $\T_\la(X)$ that is injective on $A$, then there exists a completely isometric homomorphism
\[
\T_\la(X)^+ \longrightarrow \ca(\Psi) \otimes \ca_\la(P); \la(\xi_p) \mapsto \Psi \la(\xi_p) \otimes V_p.
\] 
In particular, $\ca(\Psi)$ is a C*-cover of the cosystem $(\T_\la(X)^+, G, \de)$.
\end{proposition}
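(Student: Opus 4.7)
The plan is to build the required map as a composition of the coaction on $\ca(\Psi)$ with a standard ucp compression of $\ca_{\max}(G)$ into a Toeplitz-type algebra on $\ell^2(P)$. Since $\Psi$ is equivariant, $\ca(\Psi)$ carries a coaction $\de_\Psi \colon \ca(\Psi) \to \ca(\Psi) \otimes \ca_{\max}(G)$ with $\de_\Psi(\Psi\la(\xi_p)) = \Psi\la(\xi_p) \otimes u_p$. As in the proof of Proposition \ref{P:cc hom nsa}, the ucp compression $\phi \colon \ca_{\max}(G) \to \B(\ell^2(P))$ given by $u_g \mapsto P_{\ell^2(P)} \la_g|_{\ell^2(P)}$ sends $\phi(u_p) = V_p$ and is multiplicative on the subalgebra generated by $\{u_p \mid p \in P\}$. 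Hence $\Theta := (\id \otimes \phi) \circ \de_\Psi \circ \Psi$ is a ucp map into $\ca(\Psi) \otimes \ca_\la(P)$ whose restriction to $\T_\la(X)^+$ is a completely contractive homomorphism sending $\la(\xi_p) \mapsto \Psi\la(\xi_p) \otimes V_p$.

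To promote $\Theta|_{\T_\la(X)^+}$ to a complete isometry, I would invoke Proposition \ref{P:inter W} applied to the equivariant representation $t := \Psi\la$ of $X$, which is injective on $A$ since $\Psi$ is. This yields an isometry $W \colon \F X \otimes_A H_\Psi \to H_\Psi \otimes \ell^2(P)$ satisfying $W(\la(\xi_p) \otimes I_{H_\Psi}) = (\Psi\la(\xi_p) \otimes V_p) W = \Theta(\la(\xi_p)) W$ for every $p \in P$. Extending by linearity and multiplicativity across polynomials in the generators and then by norm continuity of $\Theta$ gives $W(y \otimes I_{H_\Psi}) = \Theta(y) W$ for all $y \in \T_\la(X)^+$. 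Consequently
\[
\nor{\Theta(y)} \geq \nor{W^* \Theta(y) W} = \nor{y \otimes I_{H_\Psi}} = \nor{y},
\]
where the last equality uses faithfulness of $\Psi|_A$ to ensure the tensor ampliation $S \mapsto S \otimes I_{H_\Psi}$ is isometric. Repeating the argument with the matrix ampliation $W^{(n)} = \mathrm{diag}(W, \dots, W)$ upgrades this to complete isometry.

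For the ``in particular'' statement, complete isometry of $\Theta$ combined with its factorization through the completely contractive map $(\id \otimes \phi) \circ \de_\Psi$ forces $\Psi|_{\T_\la(X)^+}$ itself to be completely isometric. Since $\T_\la(X)^+$ contains $\la(A)$ together with every $\la(\xi_p)$, we have $\ca(\Psi) = \ca(\Psi(\T_\la(X)^+))$, and equivariance of $\Psi$ gives $(\Psi \otimes \id) \circ \de = \de_\Psi \circ \Psi$ on generators, hence on $\T_\la(X)^+$. Thus $(\ca(\Psi), \Psi|_{\T_\la(X)^+}, \de_\Psi)$ is a C*-cover of the cosystem $(\T_\la(X)^+, G, \de)$.

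The main obstacle I anticipate is propagating the intertwining identity $W(y \otimes I) = \Theta(y) W$ from the generators $\la(\xi_p)$ to arbitrary $y \in \T_\la(X)^+$. This is precisely where the ucp factorization through $\ca_{\max}(G)$ earns its keep: it supplies a priori norm continuity of $\Theta$, without which the dilation-based lower bound for $\nor{\Theta(y)}$ could not be pushed through limits. Once this extension is secured, the lower bound from the dilation matches the upper bound from the ucp construction and complete isometry drops out with no further work.
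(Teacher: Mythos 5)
Your proof is correct and follows essentially the same route as the paper: the map is realized as $(\id\otimes\phi)\circ\de_\Psi\circ\Psi$ and the lower norm bound comes from the intertwining isometry $W$ of Proposition \ref{P:inter W} together with faithfulness of the ampliation $S\mapsto S\otimes I_{H_\Psi}$. The only cosmetic difference is that the paper packages the lower bound as an explicit completely contractive left inverse $\Phi$ (built in Proposition \ref{P:cc hom nsa} by compressing to $WW^*$), whereas you obtain it directly from $\nor{\Theta(y)}\geq\nor{W^*\Theta(y)W}=\nor{y}$; these are the same estimate.
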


\begin{proof}
First we note that $\Psi \la \otimes V \colon \xi_p \mapsto \Psi\la(\xi_p) \otimes V_p$ defines a representation of $X$. 
Moreover it is equivariant as $\Psi$ is so.
Let $\Phi$ be the completely contractive homomorphism constructed in the proof of Proposition \ref{P:cc hom nsa}, so that $\Phi(\Psi \la(\xi_p) \otimes V_p) = \la(\xi_p)$.
Next consider the unital completely positive map
\[
\phi \colon \ca_{\max}(G) \longrightarrow \ca_\la(G) \longrightarrow \B(\ell^2(P)) ; u_g \mapsto \la_g \mapsto P_{\ell^2(P)} \la_g |_{\ell^2(P)}.
\]
For the coaction $\de_\Psi$ on $\ca(\Psi)$ we directly compute
\[
(\id \otimes \phi) \de_\Psi \Psi(\la(\xi_p)) = (\id \otimes \phi)(\Psi \la(\xi_p) \otimes u_p) = \Psi \la(\xi_p) \otimes V_p.
\]
Therefore, the map $(\id \otimes \phi) \de_\Psi \Psi$ is a completely contractive left inverse for $\Phi$, and thus $\Phi$ is completely isometric.
On the other hand, $\Phi (\id \otimes \phi) \de_\Psi$ is a completely contractive left inverse of $\Psi$, and thus $\Psi$ is completely isometric.
\end{proof}

We next show that the C*-envelope of the cosystem on $\T_\la(X)^+$ is the required terminal object.

\begin{theorem}\label{T:co-un cenv}
Let $P$ be a unital subsemigroup of a discrete group $G$ and let $X$ be a product system over $P$.
Then $\cenv(\T_\la(X)^+, G, \de)$ is a boundary representation for the equivariant injective covariant representations of $X$.
Moreover, we have that
\[
\cenv(\T_\la(X)^+, G, \de) \simeq A \times_{X, \la} P.
\]
\end{theorem}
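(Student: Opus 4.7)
The strategy is to invoke the co-universality of the C*-envelope for cosystems, after reducing the given representation to a Fell-bundle setting via Exel's machinery and the intertwining-isometry results already in hand. First I would verify that $\cenv(\T_\la(X)^+, G, \de)$ itself lies in the class of equivariant injective covariant representations of $X$. Since $\T_\la(X)$ is trivially a C*-cover of the cosystem $(\T_\la(X)^+, G, \de)$, the co-universality of the cosystem C*-envelope yields an equivariant $*$-epimorphism $\T_\la(X) \twoheadrightarrow \cenv(\T_\la(X)^+, G, \de)$ fixing $\T_\la(X)^+$; the inherited coaction is normal by \cite[Corollary 3.9]{DKKLL20}. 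Hence $\cenv(\T_\la(X)^+, G, \de)$ is an equivariant representation of $X$, injective on $A$ because $A \subseteq \T_\la(X)^+$ embeds completely isometrically into $\cenv$, Fock-covariant as a quotient of $\T_\la(X) = \ca_\la(\F\C_G X)$, and therefore covariant by Proposition \ref{P:separating}.

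For the boundary property, given an equivariant injective covariant representation $t$ of $X$, I would build the $*$-epimorphism $\ca(t) \twoheadrightarrow \cenv(\T_\la(X)^+, G, \de)$ via Fell-bundle reduction. By Proposition \ref{P:separating} the induced representation $\dot{t} \colon \T_{\cov}^{\fock}(X) \to \ca(t)$ is injective on $A$, and Remark \ref{R:Exel} produces a canonical equivariant $*$-epimorphism $\ca(t) \twoheadrightarrow \ca_\la(\dot{t}(\F\C_G X))$ that fixes the image of $X$. The resulting reduction $\Psi \colon \T_\la(X) = \ca_\la(\F\C_G X) \twoheadrightarrow \ca_\la(\dot{t}(\F\C_G X))$ is an equivariant $*$-homomorphism which remains injective on $A$, because the left regular representation is faithful on every fibre of the Fell bundles and $\dot{t}|_A$ is injective. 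Proposition \ref{P:cocover nsa} then asserts that $\ca_\la(\dot{t}(\F\C_G X)) = \ca(\Psi)$ is a C*-cover of the cosystem $(\T_\la(X)^+, G, \de)$, and the co-universality of the cosystem C*-envelope yields an equivariant $*$-epimorphism $\ca_\la(\dot{t}(\F\C_G X)) \twoheadrightarrow \cenv(\T_\la(X)^+, G, \de)$ fixing $\T_\la(X)^+$. Composing with the reduction $\ca(t) \twoheadrightarrow \ca_\la(\dot{t}(\F\C_G X))$ produces the desired equivariant $*$-epimorphism $\ca(t) \twoheadrightarrow \cenv(\T_\la(X)^+, G, \de)$ fixing $X$.

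For the identification $\cenv(\T_\la(X)^+, G, \de) \simeq A \times_{X, \la} P$ I would argue in both directions. The canonical representation of $X$ in $A \times_{X, \la} P$ is equivariant with normal coaction, injective on $A$ by Propositions \ref{P:A emb} and \ref{P:lift A to fpa} applied through the faithful conditional expectation, and covariant because $A \times_X P$ factors through $\T_{\cov}^{\fock}(X)$ by Proposition \ref{P:sc is fc}; the boundary property just established therefore supplies a canonical equivariant $*$-epimorphism $A \times_{X, \la} P \twoheadrightarrow \cenv(\T_\la(X)^+, G, \de)$ fixing $X$. For the reverse, the representation of $X$ in $\cenv(\T_\la(X)^+, G, \de)$ is injective and Fock-covariant, so Proposition \ref{P:fpa t} yields that the kernel of $\T_{\cov}^{\fock}(X) \to \cenv(\T_\la(X)^+, G, \de)$ on the $e$-fibre is contained in $\ker q_{\scv} \cap [\T_{\cov}^{\fock}(X)]_e$; since both kernels are induced ideals under the $G$-coaction, the inclusion extends globally and produces a canonical equivariant $*$-epimorphism $\cenv(\T_\la(X)^+, G, \de) \twoheadrightarrow A \times_X P$ fixing $X$. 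Normality of the coaction on $\cenv(\T_\la(X)^+, G, \de)$ then forces this map to factor through the Fell-bundle reduction $A \times_X P \twoheadrightarrow A \times_{X, \la} P$, and the two $*$-epimorphisms compose to the identity on the generators $X$, hence are mutually inverse isomorphisms.

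The principal technical obstacle is the middle step, where one must verify that the Fell-bundle reduction of $\ca(t)$ genuinely satisfies the hypotheses of Proposition \ref{P:cocover nsa}: equivariance of $\Psi$, injectivity on $A$, and compatibility of the quotient $\ca(t) \twoheadrightarrow \ca_\la(\dot{t}(\F\C_G X))$ with the generators $X$. This rests on combining Exel's theory from Remark \ref{R:Exel} with the intertwining-isometry machinery of Propositions \ref{P:inter W} and \ref{P:cc hom nsa}; once that compatibility is established, the remainder of the argument follows cleanly from the co-universality of the cosystem C*-envelope and from Proposition \ref{P:fpa t}.
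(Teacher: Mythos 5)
Your proof of the membership claim and of the boundary property follows the paper's own route (reduce to a Fock-covariant representation via Proposition \ref{P:separating}, pass to $\ca_\la(\dot{t}(\F\C_G X))$ via Remark \ref{R:Exel}, recognize it as a C*-cover of the cosystem via Proposition \ref{P:cocover nsa}, and invoke co-universality of $\cenv(\T_\la(X)^+, G, \de)$), and that part is correct.

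The identification $\cenv(\T_\la(X)^+, G, \de) \simeq A \times_{X,\la} P$ has a genuine gap in your reverse direction. You claim that the fibrewise inclusion $\ker t_\ast \cap [\T_{\cov}^{\fock}(X)]_e \subseteq \ker q_{\scv} \cap [\T_{\cov}^{\fock}(X)]_e$ from Proposition \ref{P:fpa t} ``extends globally'' because ``both kernels are induced ideals,'' yielding a $*$-epimorphism $\cenv(\T_\la(X)^+, G, \de) \twoheadrightarrow A \times_X P$. But the kernel of $\T_{\cov}^{\fock}(X) \to \cenv(\T_\la(X)^+, G, \de)$ is \emph{not} an induced ideal in general, and no such epimorphism onto the \emph{maximal} algebra $A \times_X P$ can exist. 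Concretely, take $X_g = \bC$ over $P = G$ a non-amenable group: then $\T_{\cov}^{\fock}(X) = A \times_X P = \ca_{\max}(G)$, the unit fibre is $\bC 1$, and $\cenv(\T_\la(X)^+, G, \de) \simeq \ca_\la(G)$; the kernel of $\ca_{\max}(G) \to \ca_\la(G)$ meets the unit fibre trivially yet is nonzero, so the fibrewise inclusion holds while the global one fails, and a generator-fixing surjection $\ca_\la(G) \to \ca_{\max}(G)$ would contradict non-amenability. The paper avoids this entirely: having the single epimorphism $A \times_{X,\la} P \to \cenv(\T_\la(X)^+, G, \de)$ (your forward direction, which is fine), it observes that this map is injective on $A$, hence injective on $[\S\C_G X]_e$ by Proposition \ref{P:lift A to fpa}, and hence faithful by Remark \ref{R:Exel} because the coaction on the target is normal --- no inverse map is ever constructed. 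Alternatively, your reverse direction can be repaired only by running the Fell-bundle quotient argument of Theorem \ref{T:co-un sc} (pass to the induced ideal $J = \sca{t_\ast(\ker q_{\scv}) \cap [\ca(t)]_e}$ and map into the \emph{reduced} algebra $\ca_\la(\S\C_G X)$), not by lifting a fibrewise kernel inclusion to the ambient C*-algebras.
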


\begin{proof}
Let $\F_{\cov}$ be the class of equivariant injective covariant representations of $X$.
By definition the relations in $\T_\la(X)$ pass to $\cenv(\T_\la(X)^+, G, \de)$, and so the latter is Fock-covariant and thus it is indeed in $\F_{\cov}$.
Now let $t$ be an element of $\F_{\cov}$.
By Proposition \ref{P:separating} we can assume without loss of generality that $t$ is actually Fock-covariant.
By Remark \ref{R:Exel} we get that $t_{\ast}$ implements a $*$-epimorphism
\[
\Psi \colon \T_\la(X) \simeq \ca_\la(\F\C_G X) \longrightarrow \ca_\la(t_{\ast}(\F\C_G X)),
\]
where $t_{\ast}(\F\C_G X)$ is the induced Fell bundle inside $\ca(t)$.
Let
\[
\la_t \colon \ca(t) \longrightarrow \ca_\la(t_{\ast}(\F\C_G X)).
\]
be the canonical $*$-epimorphism for the Fell bundle $t_{\ast}(\F\C_G X)$, as $t$ admits a coaction of $G$.
By Proposition \ref{P:cocover nsa} we get that the map $\Psi$ defines a C*-cover of the cosystem $(\T_\la(X)^+, G, \de)$, and thus there exists a canonical $*$-epimorphism
\[
\Psi' \colon \ca(t) \longrightarrow (\T_\la(X)^+, G, \de).
\]
Hence we get the following commutative diagram
\[
\xymatrix{
\T_{\cov}^{\fock}(X) \ar[r]^{t_{\ast}} \ar[dr]^{\la_{\ast}} & \ca(t) \ar[r]^{\la_t \phantom{ooooo}} & \ca_{\la}(t_{\ast}(\F\C_G X)) \ar[dr]^{\Psi'} & \\
 & \T_\la(X) \ar[rr]^{\Phi} \ar[ur]^{\Psi} && \cenv(\T_\la(X)^+, G, \de)
}
\]
of canonical $*$-epimorphisms, where $\Phi$ is defined by the definition of the C*-envelope of the cosystem.
This shows that the canonical $*$-representation
\[
\T_{\cov}^{\fock}(X) \stackrel{\Phi \la_{\ast}}{\longrightarrow} \cenv(\T_\la(X)^+, G, \de)
\]
factors through $t_{\ast}$.

For the second part, by Proposition \ref{P:A emb} we have that $A \times_{X, \la} P$ defines an equivariant representation of $\F\C_G X$ that is injective on $A$.
Hence there exists an equivariant $*$-epimorphism
\[
A \times_{X, \la} P \longrightarrow \cenv(\T_\la(X)^+, G, \de)
\]
that fixes $X$, and thus it is injective on $A$.
By Proposition \ref{P:lift A to fpa} this map is injective on $[\S\C_G X]_e$ by being injective on $A$.
Recall that the coaction on $A \times_{X, \la} P$ is normal.
Therefore, by Remark \ref{R:Exel} we have that the $*$-epimorphism is faithful.
\end{proof}

\subsection{Co-universality for product systems through strong covariance}

We can have an independent proof of the co-universality result that goes through the Fell bundle theory.
We have all tools to implement the strategy of Carlsen--Larsen--Sims--Vittadello \cite{CLSV11} and get the existence of the co-universal representation through $A \times_{X, \la} P$.
We also provide a Fell bundle route for Proposition \ref{P:cocover nsa} and complete the connection with the C*-envelope.

In Proposition \ref{P:cocover nsa} we saw how a Fell's absorption principle induces that representations of $\T_\la(X)^+$ that are injective on $A$ are automatically completely isometric.
This works also in reverse by using the Fell bundle technique of \cite[Theorem 3.3]{Exe97}.

\begin{proposition}\label{P:cc hom cstar}
Let $P$ be a unital subsemigroup of a discrete group $G$ and let $X$ be a product system over $P$.
If $t$ is an equivariant representation of $X$ that is injective on $A$, then there exists a completely contractive homomorphism
\[
\ol{\alg}\{t(X_p) \mid p \in P\} \longrightarrow \L( \F X ); t(\xi_p) \mapsto \la(\xi_p).
\] 
\end{proposition}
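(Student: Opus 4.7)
My approach is to apply Corollary \ref{C:Bessel} at both the scalar and matricial level, mimicking Sehnem's computation sketched right before the statement but with the equivariant injective representation $t$ playing the role there taken by a $*$-representation of $\T_\la(X)$. By equivariance and injectivity of $t$ on $A$, the fibre-conditional expectation on $\ca(t)$ satisfies $E_{t_\ast}(t(\xi_r)^* t(\eta_r)) = t(\sca{\xi_r, \eta_r})$, so $\nor{t(\xi_r)}_{Y_t} = \nor{\xi_r}_{X_r}$. The $A$-module map $U \colon Y_t \to \F X$ defined by $t(\xi_r) \mapsto \xi_r$ is thus an isometric surjection.

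For the scalar bound, I first collapse any element of $\alg\{t(X_p) \mid p \in P\}$ to a finite sum $b = \sum_j t(\zeta_{p_j})$ with $\zeta_{p_j} \in X_{p_j}$, using property (ii) of a Toeplitz representation. Lifting $b = t_\ast(\wt{b})$ for some $\wt{b} \in \T(X)$, and picking $\eta = \sum_q t(\eta_q) \in Y_t$, the product $b \eta = \sum_{j,q} t(\zeta_{p_j} \eta_q)$ again lies in $Y_t$ by the product rule. Corollary \ref{C:Bessel} then delivers
\[
\nor{b \eta}_{Y_t} \leq \nor{b} \cdot \nor{\eta}_{Y_t}.
\]
Since $U$ is multiplicative against $t$-polynomials in the sense that $U(b \eta) = \bigl(\sum_j \la(\zeta_{p_j})\bigr) \cdot U(\eta)$, surjectivity of $U$ rearranges this into $\bigl\| \sum_j \la(\zeta_{p_j}) \bigr\|_{\L(\F X)} \leq \nor{b}$. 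In particular, if $b = 0$ in $\ca(t)$ then $\sum_j \la(\zeta_{p_j}) = 0$, so the assignment $t(\xi_p) \mapsto \la(\xi_p)$ is well-defined and contractive on the dense subalgebra, and extends to a contractive homomorphism $\Phi$ on the closure.

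For complete contractivity, I would apply the matricial Bessel inequality displayed just before Corollary \ref{C:Bessel} in the same manner. For $[b^{ij}] \in M_n(\alg\{t(X_p)\})$ and $[\eta^k] \in \sum_{k=1}^n Y_t$, each entry $b^{ik} \eta^k$ remains in $Y_t$, and the matricial inequality yields
\[
\bigl\| [b^{ij}] \cdot [\eta^k] \bigr\|_{\sum_{k=1}^n Y_t} \leq \nor{[b^{ij}]}_{M_n(\ca(t))} \cdot \nor{[\eta^k]}_{\sum_{k=1}^n Y_t}.
\]
The amplification $U^{\oplus n}$ entrywise intertwines the matrix actions of $M_n(\ca(t))$ on $\sum_{k=1}^n Y_t$ and of $M_n(\L(\F X))$ on $\sum_{k=1}^n \F X$, so this translates directly to $\bigl\| [\Phi(b^{ij})] \bigr\|_{M_n(\L(\F X))} \leq \nor{[b^{ij}]}_{M_n(\ca(t))}$.

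The main obstacle I anticipate is modest book-keeping: checking that $U$ intertwines the left matrix actions so that $U(b\eta) = \Phi(b) U(\eta)$ and its matricial analogue hold. This reduces to the identity $U(t(\xi_p) \cdot t(\eta_q)) = \la(\xi_p) \cdot U(t(\eta_q))$, which is immediate from the product rule $t(X_p) \cdot t(X_q) \subseteq t(X_{pq})$ and the definitions of $U$ and $\la$.
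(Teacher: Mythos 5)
Your proposal is correct and follows essentially the same route as the paper's proof: both apply the (matricial) Bessel inequality of Corollary \ref{C:Bessel} to the left-multiplication action on the module $Y_t$ and then transport the resulting completely contractive map to $\L(\F X)$ via the unitary $U \colon Y_t \to \F X$, $t(\xi_r) \mapsto \xi_r$. The only cosmetic difference is that the paper records the adjointability of the left-multiplication operators explicitly, whereas you pass directly through $U$; the substance is identical.
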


\begin{proof}
Let $Y_t$ be the Hilbert module associated to $t$ from Subsection \ref{Ss:Bessel}, so that
\[
\sca{\eta_1, \eta_2}_Y := E_{t_\ast}(\eta_1^* \eta_2) \foral \eta_1, \eta_2 \in \sum_p t(X_p),
\]
where $E_{t_\ast}$ is the conditional expectation on $\ca(t)$.
Let $i, j, k \in \{1, \dots, n\}$, and let
\[
t({\xi}^{ij}) := \sum_{p \in P} t(\xi^{ij}_p) \qand {\eta}^k := \sum_{q \in P} t(\eta^k_q)
\]
be finitely supported vectors in $\sum_p t(X_p)$.
Since $t( \xi^{ik} ) \eta_k \in \sum_p t(X_p)$, by Bessel's inequality of Corollary \ref{C:Bessel} we have
\begin{align*}
\| [t({\xi}^{ij}) ] [\eta^k] \| \leq \nor{ [ t({\xi}^{ij}) ] } \cdot \| [\eta^k] \|.
\end{align*}
This yields a well-defined completely contractive map
\[
L \colon \alg\{ t(X_p) \mid p \in P \} \longrightarrow \B Y
\]
by \emph{bounded} operators on $Y$, such that $L(t(\xi_p)) t(\eta_q) = t(\xi_p \eta_q)$.
In particular we see that this map takes values in $\L Y$ with
\[
L(t(\xi_p))^* t( \eta_q)
= 
\begin{cases} 
t(\xi_p^* \eta_q) & \text{if } q \in pP, \\
0 & \text{otherwise}.
\end{cases}
\]

Next we note that $Y_{t}$ is unitarily equivalent to the Fock space $\F X$ by the map
\[
U( \sum_{p} t(\xi_p) ) = \sum_{p} \xi_p.
\]
Indeed, first define $U$ on finite sums; equivariance of $t$ implies that $\{t(X_p)\}_{p \in P}$ is a family of linearly independent spaces in $\ca(t)$, and injectivity of $t$ yields that $U$ is well defined.
Moreover injectivity of $t$ gives that $U$ is an isometry onto $\sum_p t(X_p)$.
Continuity of $E_{t_{\ast}}$ allows to extend the map $U$ to a unitary from $Y_{t}$ onto $\F X$.

Hence we deduce a completely contractive map
\[
\ad_U L \colon \alg\{ t(X_p) \mid p \in P \} \longrightarrow \L(\F X),
\]
such that
\[
\ad_U L( t(\xi_p)) \xi_r = U t(\xi_p) t(\xi_r) = \xi_p \xi_r = \la(\xi_p) \xi_r,
\]
and the proof is complete.
\end{proof}

\begin{proposition}\label{P:cocover cstar}
Let $P$ be a unital subsemigroup of a discrete group $G$ and let $X$ be a product system over $P$.
If $\Psi$ is an equivariant $*$-representation of $\T_\la(X)$ that is injective on $A$, then $\ca(\Psi)$ is a C*-cover of the cosystem $(\T_\la(X)^+, G, \de)$.
Moreover, there exists a completely isometric homomorphism
\[
\T_\la(X)^+ \longrightarrow \ca(\Psi) \otimes \ca_\la(P); \la(\xi_p) \mapsto \Psi \la(\xi_p) \otimes V_p.
\] 
\end{proposition}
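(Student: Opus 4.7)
The plan is to adapt the proof of Proposition \ref{P:cocover nsa} by replacing the intertwining isometry argument of Proposition \ref{P:cc hom nsa} with the Bessel inequality argument of Proposition \ref{P:cc hom cstar}. The core idea is to view the map
\[
\Theta \colon \T_\la(X)^+ \longrightarrow \ca(\Psi) \otimes \ca_\la(P); \quad \la(\xi_p) \longmapsto \Psi\la(\xi_p) \otimes V_p
\]
as the composition of $\Psi$ with a completely contractive slice map built from $\de_\Psi$, and then to produce a completely contractive left inverse for $\Theta$ using Proposition \ref{P:cc hom cstar} applied to the representation $\xi_p \mapsto \Psi\la(\xi_p) \otimes V_p$.

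First I would check that
\[
t \colon X \longrightarrow \ca(\Psi) \otimes \ca_\la(P); \quad \xi_p \longmapsto \Psi\la(\xi_p) \otimes V_p,
\]
is a Toeplitz representation of $X$ (using $V_p V_q = V_{pq}$ and $V_p^* V_{pq} = V_q$ on the second factor), that it is injective on $A$ because $\Psi$ is, and that it is $G$-equivariant with respect to the coaction inherited from $\de_\Psi$ on $\ca(\Psi)$ (each generator $\Psi\la(\xi_p) \otimes V_p$ lies in the $p$-spectral subspace of $\de_\Psi \otimes \id$). Proposition \ref{P:cc hom cstar} then yields a completely contractive homomorphism
\[
\Phi \colon \ol{\alg}\{\Psi\la(X_p) \otimes V_p \mid p \in P\} \longrightarrow \T_\la(X)^+; \quad \Psi\la(\xi_p) \otimes V_p \longmapsto \la(\xi_p).
\]

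Next I would assemble the standard slice map $\phi \colon \ca_{\max}(G) \to \B(\ell^2(P))$ given by $u_g \mapsto P_{\ell^2(P)} \la_g|_{\ell^2(P)}$, which is unital completely positive and satisfies $\phi(u_p) = V_p$ for every $p \in P$. Composing with $\de_\Psi$ gives the completely contractive map $(\id \otimes \phi) \de_\Psi \colon \ca(\Psi) \to \ca(\Psi) \otimes \ca_\la(P)$ sending $\Psi\la(\xi_p)$ to $\Psi\la(\xi_p) \otimes V_p$; hence $\Theta = (\id \otimes \phi) \de_\Psi \circ \Psi|_{\T_\la(X)^+}$ is completely contractive with image in $\ca(\Psi) \otimes \ca_\la(P)$. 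Since $\Phi \circ \Theta$ fixes every generator $\la(\xi_p)$, I conclude that $\Theta$ is completely isometric, producing the required embedding. The same composition $\Phi \circ (\id \otimes \phi) \de_\Psi$ is a completely contractive left inverse for $\Psi|_{\T_\la(X)^+}$, so $\Psi$ is completely isometric on $\T_\la(X)^+$; combined with the $G$-equivariance of $\Psi$ this shows that $(\ca(\Psi), \Psi, \de_\Psi)$ is a C*-cover of the cosystem $(\T_\la(X)^+, G, \de)$.

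The only delicate point is verifying that $t$ meets the hypotheses of Proposition \ref{P:cc hom cstar} inside the tensor product algebra, in particular that the grading needed for the Bessel-type argument is present; however this follows immediately from the equivariance of $\Psi$, since the coaction on the first tensor factor already separates the fibres $t(X_p)$. Once $\Phi$ is in hand, the remaining bookkeeping is formal and parallels Proposition \ref{P:cocover nsa}.
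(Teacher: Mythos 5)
Your proof is correct. The paper's own argument splits the two claims: it first applies Proposition \ref{P:cc hom cstar} to the \emph{untensored} representation $t = \Psi\la|_X$, obtaining the completely contractive left inverse $\ad_U L$ for $\Psi|_{\T_\la(X)^+}$ and hence the C*-cover statement; for the tensor embedding it then reduces to the case $\Psi = \la$ and re-runs the intertwining-isometry argument of Propositions \ref{P:inter W} and \ref{P:cocover nsa} with the isometry $W(\xi_r \otimes \de_s) = \xi_r \otimes \de_{rs}$. You instead apply Proposition \ref{P:cc hom cstar} once, to the tensored representation $\xi_p \mapsto \Psi\la(\xi_p) \otimes V_p$, and extract both claims from the resulting left inverse $\Phi$ together with the u.c.p.\ map $(\id \otimes \phi)\de_\Psi$. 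This is legitimate: the tensored family is indeed a Toeplitz representation of $X$ (via $V_pV_q = V_{pq}$ and $V_p^*V_{pq} = V_q$), injective on $A$, and equivariant once one flips the $\ca_{\max}(G)$ leg of $\de_\Psi \otimes \id$ to the outside, so the Bessel-inequality machinery of Corollary \ref{C:Bessel} applies verbatim; moreover $(\id \otimes \phi)\de_\Psi$ is multiplicative on the closed algebra generated by the fibres $[\ca(\Psi)]_p$, $p \in P$, so your $\Theta$ really is a completely contractive homomorphism. Your route is slightly more economical, staying entirely within the Fell-bundle/Bessel framework and avoiding the reduction to $\Psi = \la$, whereas the paper's second half deliberately reuses the intertwining-isometry technique in parallel with Proposition \ref{P:cocover nsa}; both yield the same conclusion.
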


\begin{proof}
By applying Proposition \ref{P:cc hom cstar} for $t = \Psi \la|_X$ we have a completely contractive map
\[
\ad_U L \colon \alg\{ \Psi\la(X_p) \mid p \in P \} \longrightarrow \T_\la(X)^+,
\]
such that $\ad_U L(\Psi \la(\xi_p)) = \la(\xi_p)$.
Consequently $\ad_U L$ is a completely contractive left inverse to the completely contractive map $\Psi|_{\T_\la(X)^+}$, making the latter a complete isometry.

For the second part, it suffices to show this for $\Psi = \la$.
As in Proposition \ref{P:cocover nsa}, let the isometry 
\[
W \colon \F X \otimes \ell^2(P) \longrightarrow \F X \otimes \ell^2(P); \xi_r \otimes \de_s \mapsto \xi_r \otimes \de_{r s}.
\]
It has final space densely spanned by $\{\xi_r \otimes \de_{rs} \mid r, s \in P; \xi_r \in X_r\}$, and thus invariant for $\alg\{\la(X_p) \otimes V_p \mid p \in P\}$.
Moreover we directly verify that
\[
W (\la(\xi_p) \otimes I_{\ell^2(P)}) (\xi_r \otimes \de_s) = \xi_p \xi_r \otimes \de_{prs} = (\la(\xi_p) \otimes V_p) W (\xi_r \otimes \de_s).
\]
Hence we can define a completely contractive map $\Phi$ by
\[
\Phi \colon \alg\{\la(X_p) \otimes V_p \mid p \in P\} \stackrel{|_{WW^*}}{\longrightarrow}
\alg\{W (\la(X_p) \otimes I_{\ell^2(P)}) W^* \mid p \in P\} \stackrel{\simeq}{\longrightarrow}
\T_\la(X)^+,
\]
so that $\Phi(\la(\xi_p) \otimes V_p) = \la(\xi_p)$.
It follows that $\Phi$ has a completely contractive left inverse given by $(\id \otimes \phi) \de$, for the unital completely positive map $\phi \colon \ca_{\max}(G) \to \B(\ell^2(P))$ with $\phi(u_g) = P_{\ell^2(P)} \la_g |_{\ell^2(P)}$, and the coaction $\de$ of $\T_\la(X)$.
Thus $\Phi$ is completely isometric.
\end{proof}

We can apply the Fell bundle arguments that appear in the proof of \cite[Theorem 4.1]{CLSV11}.
There is a key underlying idea that can be traced back to Katayama \cite[Proposition 11]{Kat85} and Raeburn \cite[Lemma 3.1]{Rae92}.
It relies on the fact that any coaction induces a reduced coaction to a quotient by an appropriate ideal; and thus that quotient is minimal for the quotient Fell bundle, hence receiving also from the original Fell bundle.

\begin{theorem}\label{T:co-un sc}
Let $P$ be a unital subsemigroup of a discrete group $G$ and let $X$ be a product system over $P$.
Then $A \times_{X, \la} P$ is a boundary representation for the equivariant injective covariant representations of $X$.
Moreover, we have that
\[
A \times_{X, \la} P \simeq \cenv(\T_\la(X)^+, G, \de).
\]
\end{theorem}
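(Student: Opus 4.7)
The plan is to establish co-universality of $A \times_{X, \la} P$ via strong covariance and Fell bundle machinery, in the spirit of Katayama \cite{Kat85}, Raeburn \cite{Rae92} and Carlsen--Larsen--Sims--Vittadello \cite{CLSV11}, and then identify this object with $\cenv(\T_\la(X)^+, G, \de)$ by two applications of co-universality, independently of Theorem \ref{T:co-un cenv}.

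First I would verify that $A \times_{X, \la} P$ itself lies in the class of equivariant injective covariant representations of $X$. Equivariance is built in, since $A \times_{X, \la} P = \ca_\la(\S\C_G X)$ carries the normal coaction of the reduced Fell bundle C*-algebra. Injectivity on $A$ follows from Proposition \ref{P:A emb}, which places $A$ faithfully into $A \times_X P$, combined with the standard fact that the $e$-fibre of a Fell bundle embeds into its reduced C*-algebra. Covariance is automatic: Proposition \ref{P:sc is fc} presents $A \times_X P$, and hence its quotient $A \times_{X, \la} P$, as a Fock-covariant quotient of $\T(X)$, so the condition of Definition \ref{D:separating} trivially holds.

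Given an arbitrary equivariant injective covariant representation $t$ of $X$, Proposition \ref{P:separating} yields an associated injective Fock-covariant $\dot t$ together with a canonical equivariant $*$-epimorphism $\ca(t) \to \ca(\dot t)$ fixing $X$. Proposition \ref{P:fpa t} then produces
\[
\ker \dot t_\ast \cap [\T_{\cov}^{\fock}(X)]_e \subseteq \ker q_{\scv} \cap [\T_{\cov}^{\fock}(X)]_e,
\]
and, since both $\dot t_\ast$ and $q_{\scv}$ are equivariant, both kernels are induced ideals in $\T_{\cov}^{\fock}(X)$, so this inclusion promotes to $\ker \dot t_\ast \subseteq \ker q_{\scv}$. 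Consequently $q_{\scv}$ factors through $\dot t_\ast$, producing a canonical equivariant $*$-epimorphism $\ca(\dot t) \to A \times_X P$ that fixes $X$. Post-composing with the canonical reduction $A \times_X P \to A \times_{X, \la} P$ and pre-composing with $\ca(t) \to \ca(\dot t)$ yields the required $*$-epimorphism $\ca(t) \to A \times_{X, \la} P$, establishing co-universality.

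For the identification with $\cenv(\T_\la(X)^+, G, \de)$, I would argue in two directions. Applying the previous paragraph to the Fock representation $\la$ produces a canonical equivariant $*$-epimorphism $\T_\la(X) \to A \times_{X, \la} P$ that fixes $X$ and is injective on $A$; Proposition \ref{P:cocover cstar} then exhibits $A \times_{X, \la} P$ as a C*-cover of the cosystem $(\T_\la(X)^+, G, \de)$, whence the co-universal property of the C*-envelope of a cosystem delivers a $*$-epimorphism $A \times_{X, \la} P \to \cenv(\T_\la(X)^+, G, \de)$ fixing $X$. Conversely, the realization of $\cenv(\T_\la(X)^+, G, \de)$ from Section 2 exhibits it as an equivariant quotient of $\T_\la(X)$ completely isometric on $\T_\la(X)^+$, hence injective on $A$ and Fock-covariant, placing it in our class; the co-universality just established then yields the reverse map. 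Both compositions fix the generating set $X$ and are therefore mutual inverses. The hard part has already been shouldered by Propositions \ref{P:fpa t} and \ref{P:cocover cstar}, both Bessel-type estimates packaged through Fell bundles; the remaining difficulty is the bookkeeping of verifying that every canonical $*$-epimorphism in sight fixes $X$, so that the induced-ideal inclusions close the required diagrams.
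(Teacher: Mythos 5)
The second paragraph of your argument contains a genuine gap. From Proposition \ref{P:fpa t} you obtain the inclusion $\ker \dot{t}_\ast \cap [\T_{\cov}^{\fock}(X)]_e \subseteq \ker q_{\scv} \cap [\T_{\cov}^{\fock}(X)]_e$, and you promote this to $\ker \dot{t}_\ast \subseteq \ker q_{\scv}$ on the grounds that both kernels are induced ideals because both maps are equivariant. But the kernel of an equivariant $*$-epimorphism need not be an induced ideal: this is precisely the failure of amenability of the underlying Fell bundle. For instance, the regular representation $\la_\ast \colon \T_{\cov}^{\fock}(X) \to \T_\la(X)$ is equivariant and its kernel meets $[\T_{\cov}^{\fock}(X)]_e$ trivially, yet it is nonzero whenever $\F\C_G X$ is not amenable; were your promotion valid, it would force $\ker\la_\ast = \sca{0} = (0)$ in all cases. (By contrast $\ker q_{\scv}$ genuinely is induced, being the image of $\I_{\scv} = \sca{\I_{\scv, e}}$ under an equivariant quotient, so one of your two ideals is fine.) A related warning sign is that your factorization lands in the universal algebra $A \times_X P = \ca_{\max}(\S\C_G X)$: a co-universality argument should only be expected to produce surjections onto the \emph{reduced} algebra.

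The paper's proof is organized around exactly this obstruction. It forms the induced ideal $J := \sca{t_\ast(\ker q_{\scv}) \cap [\ca(t)]_e}$ of $\ca(t)$ and proves that the Fell bundle of $\ca(t)/J$ is isomorphic to $\S\C_G X$, by establishing the two-sided equality $\ker q_J t_\ast \cap [\T_{\cov}^{\fock}(X)]_e = \ker q_{\scv} \cap [\T_{\cov}^{\fock}(X)]_e$; the nontrivial inclusion requires a second application of Proposition \ref{P:fpa t}, this time to the representation $q_J t_\ast$, which in turn requires checking that $q_J t_\ast$ is injective on $A$ (this is where Proposition \ref{P:A emb} re-enters). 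Only then does Remark \ref{R:Exel} --- Exel's co-universality of the reduced C*-algebra of a Fell bundle --- deliver the equivariant $*$-epimorphism $\ca(t) \to \ca(t)/J \to \ca_\la(\S\C_G X) = A \times_{X, \la} P$. Your verification that $A \times_{X, \la} P$ belongs to the class, and your two-sided identification with $\cenv(\T_\la(X)^+, G, \de)$ via Proposition \ref{P:cocover cstar} and co-universality, agree with the paper and are fine, but they rest on the co-universality statement whose proof is the missing piece.
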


\begin{proof}
As in the proof of Theorem \ref{T:co-un cenv} we can reduce to Fock-covariant representations.
Hence, without loss of generality, let $t$ be an equivariant injective Fock-covariant representation of $X$.
Recall from Proposition \ref{P:fpa t} that
\[
\ker t_\ast \bigcap [\T_{\cov}^{\fock}(X)]_e \subseteq \ker q_{\scv} \bigcap [\T_{\cov}^{\fock}(X)]_e,
\]
for the canonical $*$-epimorphism $q_{\scv} \colon \T_{\cov}^{\fock}(X) \to A \times_X P$ that fixes every $X_p$.
Remark \ref{R:induced} yields
\[
t_\ast(\ker q_{\scv}) \bigcap [\ca(t)]_e = t_\ast( \ker q_{\scv} \bigcap [\T_{\cov}^{\fock}(X)]_e ),
\]
since $q_{\scv}$ is equivariant.
Let $q_J$ be the quotient map on $\ca(t)$ by the ideal
\[
J := \sca{t_\ast(\ker q_{\scv}) \bigcap [\ca(t)]_e}.
\]
The goal is to show that
\[
\ker q_J t_\ast \bigcap [\T_{\cov}^{\fock}(X)]_g = \ker q_{\scv} \bigcap [\T_{\cov}^{\fock}(X)]_g \foral g \in G.
\]
It suffices to show this for $g = e_G$, and then the equality for all fibres follows by a standard argument.
With that in hand we deduce that the Fell bundle defined in $\ca(t)/J$ is isomorphic to the strongly covariant bundle $\S\C_G X$.
Consequently, we obtain an equivariant $*$-epimorphism
\[
\ca(t) \longrightarrow \ca(t)/J \longrightarrow A \times_{X, \la} P.
\]

Towards this end first assume that $x \in \ker q_{\scv} \bigcap [\T_{\cov}^{\fock}(X)]_e$.
Then
\[
t_\ast(x) \in t_\ast( \ker q_{\scv} \bigcap [\T_{\cov}^{\fock}(X)]_e ) = t_\ast(\ker q_{\scv}) \bigcap [\ca(t)]_e \subseteq J,
\]
and so $x \in \ker q_J t_\ast \bigcap [\T_{\cov}^{\fock}(X)]_e$.
The reverse inclusion will follow by applying Proposition \ref{P:fpa t} on $q_J t_\ast$.
Towards this end we need to show that $q_J t_\ast$ is injective on $\iota(A)$.
Let $a \in A$ so that $q_J t_\ast(\iota(a)) = 0$, and so $t_\ast(\iota(a)) \in J \bigcap [\ca(t)]_e$.
Since
\begin{align*}
J \bigcap [\ca(t)]_e 
& \subseteq
t_\ast(\ker q_{\scv}) \bigcap [\ca(t)]_e
=
t_\ast( \ker q_{\scv} \bigcap [\T_{\cov}^{\fock}(X)]_e ),
\end{align*}
there exists a $y \in \ker q_{\scv} \bigcap [\T_{\cov}^{\fock}(X)]_e$ such that $t_\ast(\iota(a)) = t_\ast(y)$.
We deduce that
\[
\iota(a) - y \in \ker t_\ast \bigcap [\T_{\cov}^{\fock}(X)]_e \subseteq \ker q_{\scv} \bigcap [\T_{\cov}^{\fock}(X)]_e,
\]
where we applied Proposition \ref{P:fpa t} for the injective Fock-covariant representation $t$.
It follows that $\iota(a) = (\iota(a) - y) + y \in \ker q_{\scv}$, and thus Proposition \ref{P:A emb} yields $a = 0$ as required.

For the second part, Proposition \ref{P:A emb} and Proposition \ref{P:cocover nsa} give a canonical $*$-epimorphism
\[
A \times_{X, \la} P \longrightarrow \cenv(\T_\la(X), G, \de)
\]
that fixes $X$.
On the other hand, the co-universal property of $A \times_{X, \la} P$ provides a canonical $*$-epimorphism in the other direction giving the required $*$-isomorphism.
\end{proof}

\subsection{Applications and remarks}

We close with a set of immediate applications and questions that are related to the theory.
First we observe that co-universality of $A \times_{X, \la} P$ implies a gauge-invariant uniqueness theorem.

\begin{corollary}[GIUT]
Let $P$ be a unital subsemigroup of a discrete group $G$ and let $X$ be a product system over $P$.
Let $t_\ast$ be a representation of $A \times_{X, \la} P$.
Then $t_\ast$ is faithful if and only if $t$ is an equivariant injective representation of $X$.
\end{corollary}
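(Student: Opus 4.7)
The plan is to deduce both directions of the biconditional from two earlier results: the faithful embedding $A\hookrightarrow A\times_{X,\la}P$ supplied by Proposition \ref{P:A emb}, and the co-universal property of $A\times_{X,\la}P$ established in Theorem \ref{T:co-un sc}.

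For the forward implication I would assume that $t_\ast$ is faithful, so that it is a $\ast$-isomorphism of $A\times_{X,\la}P$ onto $\ca(t)$. Proposition \ref{P:A emb} places $A$ faithfully inside $A\times_{X,\la}P$, hence the restriction of $t_\ast$ to $A$ is injective; injectivity of $t$ on each fibre $X_p$ then follows from the $C^\ast$-correspondence identity $t(\xi_p)^\ast t(\xi_p)=t(\langle \xi_p,\xi_p\rangle)$. For equivariance, I would transport the normal coaction of $G$ on $A\times_{X,\la}P$ through the $\ast$-isomorphism $t_\ast$, observing that it sends the image of $\xi_p$ in $A\times_{X,\la}P$ to $\xi_p\otimes u_p$, and therefore sends $t(\xi_p)$ to $t(\xi_p)\otimes u_p$, producing the coaction witnessing equivariance of $t$.

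For the reverse implication I would begin with the observation that the restriction to $X$ of \emph{any} representation of $A\times_{X,\la}P$ is automatically Fock-covariant, because the canonical $\ast$-epimorphism $\T(X)\to A\times_{X,\la}P$ factors through $\T_{\cov}^{\fock}(X)$ by Proposition \ref{P:sc is fc}. Together with the hypothesis that $t$ is equivariant and injective, this places $t$ inside the class of equivariant injective Fock-covariant representations addressed by Theorem \ref{T:co-un sc}. I would then invoke that theorem to produce a canonical $\ast$-epimorphism $\psi\colon \ca(t)\to A\times_{X,\la}P$ that fixes every generator $\xi_p$. The composition $\psi\circ t_\ast$ therefore acts as the identity on a generating set of $A\times_{X,\la}P$, whence on the whole C*-algebra, forcing $t_\ast$ to be injective.

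The main subtlety I expect is verifying that the co-universal map $\psi$ furnished by Theorem \ref{T:co-un sc} serves as a \emph{left} inverse of $t_\ast$ rather than only a right inverse on $\ca(t)$; this is exactly what the clause ``$\psi$ fixes every copy of $\xi_p$'' in the co-universal property encodes, and it is the linchpin of the argument. The remaining steps---passing injectivity from $A$ to each $X_p$, and the transport of the normal coaction---are then routine consequences of the representation axioms and the construction of $A\times_{X,\la}P$.
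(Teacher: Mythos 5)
Your proof is correct and follows essentially the same route as the paper's (much terser) two-sentence argument: the forward direction transports the defining properties of $A \times_{X, \la} P$ --- injectivity on $A$ via Proposition \ref{P:A emb} and the normal coaction --- through the isomorphism $t_\ast$, while the converse invokes the co-universal property of Theorem \ref{T:co-un sc} to produce a canonical $*$-epimorphism $\ca(t) \to A \times_{X,\la} P$ fixing $X$, which is then a left inverse of $t_\ast$ because the two maps agree on generators. Your extra observation that the restriction of any representation of $A \times_{X,\la} P$ to $X$ is automatically Fock-covariant (hence covariant), via Proposition \ref{P:sc is fc}, so that $t$ genuinely lies in the class to which the co-universality theorem applies, is a detail the paper leaves implicit.
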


\begin{proof}
The forward direction is immediate as $A \times_{X, \la} P$ shares all these properties.
For the converse, if $t$ satisfies these properties then co-universality of $A \times_{X, \la} P$ gives an inverse to $t_\ast$.
\end{proof}

One may compare with the following corollary, which is a consequence of the properties of $A \times_X P$.

\begin{corollary}[GIUT II]
Let $P$ be a unital subsemigroup of a discrete group $G$ and let $X$ be a product system over $P$.
Let $t$ be a strongly covariant representation of $X$.
Then there is a canonical $*$-isomorphism $\ca(t) \to A \times_{X, \la} P$ if and only if $t$ is injective and equivariant by a normal coaction.
\end{corollary}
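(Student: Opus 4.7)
The plan is to treat the two directions separately, with the forward direction being immediate from the properties of $A \times_{X, \la} P$ already established, and the reverse direction being a consequence of the strong covariance representation theorem (Proposition \ref{P:lift A to fpa}) combined with Exel's Fell bundle result recorded in Remark \ref{R:Exel}.

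For the forward direction, I would simply note that $A \times_{X, \la} P = \ca_\la(\S\C_G X)$ embeds $A$ injectively by Proposition \ref{P:A emb}, is manifestly a strongly covariant representation (it is the reduced quotient of the strongly covariant C*-algebra), and by construction admits a normal coaction of $G$ as the reduced C*-algebra of the Fell bundle $\S\C_G X$. Any $*$-isomorphism onto $\ca(t)$ transports all these properties to $t$.

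For the reverse direction, suppose $t$ is injective, equivariant by a normal coaction, and strongly covariant. Strong covariance gives a canonical equivariant $*$-epimorphism $\pi \colon A \times_X P \to \ca(t)$ that fixes $X$. Since $t$ is injective on $A$, Proposition \ref{P:lift A to fpa} shows that $\pi$ is injective on the fixed point algebra $[A \times_X P]_e$. A standard argument extends this to injectivity on every fiber of the Fell bundle $\S\C_G X$: if $x \in [A \times_X P]_g$ satisfies $\pi(x) = 0$, then $x^* x \in \ker \pi \cap [A \times_X P]_e = (0)$, and hence $x = 0$. Thus $\pi$ restricts to an injective representation of the Fell bundle $\S\C_G X$ in $\ca(t)$.

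At this point I would invoke Remark \ref{R:Exel}: since $\pi$ is an injective representation of $\S\C_G X$ and the coaction on $\ca(t)$ is normal by hypothesis, the canonical quotient $\ca_{\max}(\S\C_G X) \to \ca(t)$ factors through $\ca_\la(\S\C_G X) = A \times_{X, \la} P$ by an equivariant $*$-isomorphism. This yields the required canonical $*$-isomorphism $\ca(t) \simeq A \times_{X, \la} P$ that fixes $X$. The main obstacle is purely bookkeeping---ensuring that the canonical map from $A \times_X P$ indeed descends to the reduced version in such a way that the resulting $*$-isomorphism fixes $X$---but this is essentially automatic from the definitions and from the diagram in Remark \ref{R:Exel}.
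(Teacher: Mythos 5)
Your proposal is correct and follows essentially the same route as the paper: the forward direction is immediate from the established properties of $A \times_{X, \la} P$, and the converse combines universality of $A \times_X P$, injectivity on the fixed point algebra via Proposition \ref{P:lift A to fpa}, and the normality criterion of Remark \ref{R:Exel}. The only difference is that you spell out the standard fiber-wise extension of injectivity, which the paper leaves implicit.
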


\begin{proof}
The forward direction is immediate as $A \times_{X, \la} P$ shares all these properties.
For the converse by universality $t_\ast$ defines a $*$-representation $\S\C_G X$.
Since $t$ is injective we have that $t_\ast$ injective on $[\S\C_G X]_e$ and thus by Remark \ref{R:Exel} there is a canonical $*$-isomorphism $\ca(t) \to A \times_{X, \la} P$.
\end{proof}

Recall that by construction we have a canonical $*$-epimorphism $q_{\scv} \colon \T_{\cov}^{\fock}(X) \longrightarrow A \times_X P$.
By Remark \ref{R:Exel} it descends to a canonical $*$-epimorphism
\[
q_{\scv, \la} \colon \T_\la(X) \longrightarrow A \times_{X, \la} P.
\]
By construction $\ker q_{\scv, \la}$ contains $\la_\ast(\I_{\scv})$.
In particular, due to the (GIUT II), the canonical $*$-epimorphism
\[
\quo{\T_\la(X)}{\la_\ast(\I_{\scv})} \longrightarrow A \times_{X, \la} P
\]
is faithful if and only if it inherits a normal coaction from $\T_\la(X)$.
The reason we are interested in this quotient in the Fock representation is because the ideal $\la_\ast(I_{\scv})$ has been oftenly recognized to give a boundary space.
This still happens in many cases.
The following has been observed for right LCM-semigroups in \cite{DKKLL20}.

\begin{corollary}[Fock boundary quotient]
Let $P$ be a unital subsemigroup of a discrete group $G$ and let $X$ be a product system over $P$.
If $G$ is exact then $\ker q_{\scv, \la} = \la_\ast(\I_{\scv})$, i.e., there is a canonical $*$-isomorphism
\[
\quo{\T_\la(X)}{\la_\ast(\I_{\scv})} \simeq A \times_{X, \la} P.
\]
\end{corollary}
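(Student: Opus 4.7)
The plan is to obtain the identification as a direct consequence of the exactness theorem for the reduced cross-sectional C*-algebra functor on Fell bundles over exact discrete groups, as developed by Exel (see \cite{Exe17}, Sections 21--22). The key observation is that all three algebras involved are realised as cross-sectional algebras of Fell bundles sitting on top of each other, so the corollary is really a statement about exactness of $\ca_\la(\cdot)$ on a short exact sequence of bundles.

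First I would set up the Fell bundle picture. By Proposition \ref{P:sc is fc}, the canonical $*$-epimorphism $q_{\scv}^{\fock} \colon \T(X) \to \T_{\cov}^{\fock}(X)$ has its kernel contained in $\I_{\scv}$, so that the image $J := q_{\scv}^{\fock}(\I_{\scv})$ is an ideal of $\T_{\cov}^{\fock}(X) = \ca_{\max}(\F\C_G X)$ with quotient canonically isomorphic to $A \times_X P$. Because $\I_{\scv}$ is induced and $q_{\scv}^{\fock}$ is $G$-equivariant, $J$ is itself induced, so (by Remark \ref{R:induced} and the general Fell-bundle correspondence) $J$ arises from a Fell subbundle ideal $\I_\bullet = \{J \cap [\T_{\cov}^{\fock}(X)]_g\}_{g \in G}$ of $\F\C_G X$, and the quotient Fell bundle $\F\C_G X / \I_\bullet$ is precisely $\S\C_G X$. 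In particular $\ca_{\max}(\F\C_G X / \I_\bullet) = A \times_X P$ and $\ca_\la(\F\C_G X / \I_\bullet) = A \times_{X,\la} P$.

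Next I would apply the exactness principle: since $G$ is exact, for every Fell subbundle ideal $\I$ of a Fell bundle $\B$ over $G$, the sequence
\[
0 \longrightarrow \ca_\la(\I) \longrightarrow \ca_\la(\B) \longrightarrow \ca_\la(\B/\I) \longrightarrow 0
\]
is short exact. Applied to $\B = \F\C_G X$ and $\I = \I_\bullet$ this yields exactness of
\[
0 \longrightarrow \ca_\la(\I_\bullet) \longrightarrow \T_\la(X) \xrightarrow{\ q_{\scv,\la}\ } A \times_{X,\la} P \longrightarrow 0,
\]
so that $\ker q_{\scv, \la} = \ca_\la(\I_\bullet)$. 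To conclude I only need to identify this kernel with $\la_\ast(\I_{\scv})$. Using that $\la_\ast$ factors as $\la_{\F\C_G X} \circ q_{\scv}^{\fock}$ gives $\la_\ast(\I_{\scv}) = \la_{\F\C_G X}(J)$. This latter image is an induced ideal of the normal cosystem $\T_\la(X)$, so it is the norm-closed linear span of its fibres $\la_\ast(\I_{\scv}) \cap [\T_\la(X)]_g$, and each such fibre coincides with $\la_{\F\C_G X}(\I_g) = [\ca_\la(\I_\bullet)]_g$ because the canonical map $\ca_{\max}(\F\C_G X) \to \ca_\la(\F\C_G X)$ is isometric on each fibre. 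Hence $\la_\ast(\I_{\scv}) = \ca_\la(\I_\bullet) = \ker q_{\scv,\la}$, which is exactly the required statement.

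The main obstacle is the invocation of the Fell bundle exactness theorem in precisely the right form, and then the (essentially bookkeeping) step of matching the three incarnations of the strong covariance ideal: as the ideal $\I_{\scv} \lhd \T(X)$, as the subbundle ideal $\I_\bullet \lhd \F\C_G X$, and as the ideal $\la_\ast(\I_{\scv}) \lhd \T_\la(X)$. The rigour of this identification rests on the facts, used implicitly throughout Section 4, that $\I_{\scv}$ is an induced ideal and that $q_{\scv}^{\fock}$, $\la_{\F\C_G X}$ are $G$-equivariant, so that every relevant quotient inherits a grading whose fibres behave as expected under the canonical maps.
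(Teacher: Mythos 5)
Your argument is correct, and it runs on the same engine as the paper's proof: the only point where exactness of $G$ enters is in guaranteeing that the reduced cross-sectional construction is compatible with the induced ideal, and your short exact sequence $0 \to \ca_\la(\I_\bullet) \to \ca_\la(\F\C_G X) \to \ca_\la(\F\C_G X/\I_\bullet) \to 0$ is precisely equivalent to the statement the paper uses, namely that the coaction induced on $\T_\la(X)/\la_\ast(\I_{\scv})$ remains normal. The routes differ in packaging. The paper's proof is two lines: since $\la_\ast(\I_{\scv})$ is an induced ideal of the normally coacted $\T_\la(X)$, exactness of $G$ makes the quotient coaction normal, and then the corollary (GIUT II) proved immediately beforehand identifies the quotient --- which is a strongly covariant representation, injective on $A$ by Proposition \ref{P:A emb} --- with $A \times_{X,\la} P$. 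This bypasses all of the fibrewise bookkeeping in your final paragraph (matching $\ca_\la(\I_\bullet)$ with $\la_\ast(\I_{\scv})$ inside $\T_\la(X)$), which is the most delicate part of your write-up, though it is carried out correctly; in exchange, your version makes the Fell-bundle exact sequence explicit and does not lean on GIUT II. One small caution: you should verify that the exactness theorem you invoke is available in the literature in the form you state it (for arbitrary ideals of Fell bundles over exact discrete groups); the paper only needs the equivalent and more readily quotable fact that quotients by induced ideals preserve normality of coactions when $G$ is exact.
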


\begin{proof}
Recall that $\T_\la(X)$ admits a normal coaction and that $\la_\ast(\I_{\scv})$ is an induced ideal.
Exactness of $G$ implies that the induced coaction on the quotient is normal as well.
\end{proof}

Hence under exactness we have a hands-on understanding of the algebraic structure of the boundary quotient.

\begin{remark}[Applications I]
The GIUT has been a central tool in the $P = \bZ_+$ case.
Its ample applicability is primarily manifested in graph C*-algebras.
Yet, there are many applications to Cuntz--Pimsner algebras that include a 6-term short exact sequence for computing its K-theory \cite{Kat04}, classification \cite{BTZ18}, and the impact of shift-type equivalences up to strong Morita equivalence.
The reader may refer to \cite{DK18} for a list of results which we will not duplicate here.
The question here is to see to what extent the known theorems in the $\bZ_+$-case find their analogues for general semigroups.
\end{remark}

\begin{remark}[Applications II]
At the same time, recent work on right LCM-semigroups has shown that co-universality results have an immediate impact on Takai duality \cite{DK20, Kat20} and the reduced Hao-Ng problem \cite{DKKLL20}, with interesting interactions with K-theory problems \cite{Sch15}, but also can feed back to the identification of universal C*-algebras \cite{KKLL21}, or in cases can reduce the form of Cuntz-type C*-algebras to generalized crossed products \cite{KKLL21, Kat20, Seh21} as in the $\bZ_+$-case \cite{AEE98}.
The use of $\cenv(\T_\la(X)^+, G, \de)$ is pivotal for obtaining such results, making the step forwards from right LCM-semigroups to general semigroups highly plausible.
\end{remark}

\begin{remark}[Fock tensor algebra]
We have seen that $\T_\la(X)^+$ plays a central role in the co-universality results.
It is a direct generalization of Popescu's noncommutative disc algebra \cite{Pop91} which has been under significant study.
There are plenty of directions one can go from here starting with identifying its completely contractive representations as in \cite{MS98}.
The form of the strong covariance relations may also shed light into hyperrigidity questions, which is resolved by Katsoulis and Ramsey for $P = \bZ_+$ \cite{KR21}.
As they have shown in \cite{KR20}, such a result will then direct feed back to Takai duality and the Hao-Ng problem.
Finally, there is a lot of work for $P = \bZ_+$ in terms of the w*-analogue of the Fock tensor algebra.
Questions include reflexivity, hyperreflixivity as well as a weak*-calculus, with the list being too long to include it here.
\end{remark}


\end{document}